\title[Koszul homomorphisms]{Koszul homomorphisms and universal resolutions in local algebra} 
\date{\today}
\keywords{Free resolutions, Koszul algebras, Koszul duality, bar resolution, twisted tensor products, A-infinity algebras}
\subjclass[2020]{13D02 (primary), 16S37, 16E45, 13H10, 13F55} 
\author[B.~Briggs]{Benjamin Briggs}
\address{Benjamin Briggs,
Department of Mathematics, 
Huxley Building, South Kensington Campus, 
Imperial College, 
London SW7 2AZ, UK}
\email{b.briggs@imperial.ac.uk}
\author[J.C.~Cameron]{James C.~Cameron}
\address{James C.~Cameron}
\email{cameron@math.utah.edu}
\author[J.C.~Letz]{Janina~C.~Letz}
\address{Janina~C.~Letz,
Faculty of Mathematics,
Bielefeld University,
PO Box 100 131,
33501 Bielefeld,
Germany}
\email{jletz@math.uni-bielefeld.de}
\author[J.~Pollitz]{Josh Pollitz}
\address{Josh Pollitz,
Mathematics Department, 
Syracuse University, 
Syracuse, NY 13244 U.S.A.}
\email{jhpollit@syr.edu}
\begin{document}

\begin{abstract}
We define a local homomorphism $(Q,k)\to (R,\ell)$ to be Koszul if its derived fiber $R\lotimes_Q k$ is formal, and if $\Tor QRk$ is Koszul in the classical sense. This recovers the classical definition when $Q$ is a field, and more generally includes all flat deformations of Koszul algebras. The non-flat case is significantly more interesting, and there is no need for examples to be quadratic: all complete intersection and all Golod quotients are Koszul homomorphisms. We show that the class of Koszul homomorphisms enjoys excellent homological properties, and we give many more examples, especially various monomial and Gorenstein examples. We then study Koszul homomorphisms from the perspective of $\ai$-structures on resolutions. We use this machinery to construct universal free resolutions of $R$-modules by generalizing a classical construction of Priddy. The resulting (infinite) free resolution of an $R$-module $M$ is often minimal, and can be described by a finite amount of data whenever $M$ and $R$ have finite projective dimension over $Q$. Our construction simultaneously recovers the resolutions of Shamash and Eisenbud over a complete intersection ring, and the bar resolutions of Iyengar and Burke over a Golod ring, and produces analogous resolutions for various other classes of local rings.
\end{abstract}

\maketitle
 
\vspace{-2.1em}

\tableofcontents

\section{Introduction}

The phenomenon of Koszul duality has been observed in many forms across algebra, geometry and topology. It provides explicit computational tools for answering homological questions and opens up deep connections between a number of seemingly unrelated areas of mathematics. The goal of the present work is to develop a relative theory of Koszul duality in local commutative algebra, and to give concrete applications for  understanding infinite free resolutions.

For a finite homomorphism $\phi\colon Q\to R$ of commutative noetherian local rings, the derived fiber $F=R\lotimes_Qk$, where $k$ is the residue field of $Q$, is a differential graded $k$-algebra that  encodes important ring theoretic properties of $\phi$. We define $\phi$ to be \emph{Koszul} if $F$ is formal (see \ref{c_formal}), and if its homology $\H(F)=\Tor QRk$ is a Koszul $k$-algebra (see \ref{koszul-algebra}). This recovers the classical definition when the source is a field. Through the looking glass that connects local algebra with rational homotopy theory, the definition is directly analogous to Berglund's notion of a Koszul space. 

Flat local maps that have a Koszul fiber are natural examples of Koszul homomorphisms, but the non-flat case is significantly more interesting: all complete intersection and all Golod quotient homomorphisms are Koszul, and we give many other monomial and Gorenstein examples. In particular, there is no need for the homomorphism to be quadratic in any sense.

The definition also has structural consequences connecting the homological algebra over $R$ and $Q$. Our main theorem provides an algorithmic way to transfer free resolutions over $Q$ into free resolutions over $R$. To achieve this we introduce a slightly stronger ``strictly Koszul" property (see \ref{sec:special-koszul}) that is satisfied in our main examples. These ideas borrow from a long history, and we will discuss the context and technology behind the construction following this summary of our main results. 

\begin{introthm} \label{intro:transfer}
For any strictly Koszul local homomorphism $\phi \colon Q \to R$  there is a non-negatively graded, degreewise finite rank free $Q$-module $C$ such that:
\begin{enumerate}[(1)]
\item \label{introitem1}
For each finitely generated $R$-module $M$ with a minimal $Q$-free resolution $G\to M$, there is a differential $\partial^{\tau}$ on the graded $R$-module $R \otimes C \otimes G$ such that the resulting ``twisted tensor product'' complex
\begin{equation*}
(R \otimes C\otimes G,\partial^\tau) \xrightarrow{\ \simeq\ } M
\end{equation*}
is an $R$-free resolution of $M$. If $R$ and $M$ have finite projective dimension over $Q$, then both $C$ and the twisted tensor product differential can be explicitly described in their entirety with a finite amount of data.
\item \label{introitem2} Assume that $\phi$ is small (a central case of interest is $(Q,\fm_Q)$ regular and $\ker(\phi)\subseteq \fm^2_Q$). The twisted tensor product complex is minimal for the residue field $k$ of $R$. More generally, the resolution is minimal whenever $M$ is inert with respect to $\phi$, in the sense of Lescot. In particular, 
\[
\textstyle{\sum_i }\rank_Q(C_i)t^i=\frac{ \ps^R_k(t)}{P^Q_k(t)}\,.
\]
\end{enumerate} 
The following homomorphisms are strictly Koszul:
\begin{enumerate}[(a)]
    \item\label{introex1} Surjective complete intersection homomorphisms.
    \item\label{introex2} Surjective Golod homomorphisms.
    \item\label{introex3} Surjective Gorenstein homomorphisms of projective dimension three or less.
    \item\label{introex4} Cohen presentations of compressed artinian Gorenstein rings having characteristic zero, odd embedding dimension, and socle degree not $3$.
\end{enumerate} 
\end{introthm}

Part \cref{introitem1} of the \namecref{intro:transfer}, with an explicit description of the twisted tensor product differential, is \cref{priddy-resolution}, while part \cref{introitem2} is contained in \cref{t_inert}. The examples \cref{introex1,introex2,introex3,introex4}, and several more, are introduced in \cref{sec:examples-koszul} and treated again in \cref{sec:examples-special-koszul},  with a complete description of the corresponding $C$ in each case. 

Universal resolutions, that is, free resolutions over a ring that are defined in a uniform way for all finitely generated modules, have been of central interest in homological commutative algebra since at least the 60s, often importing tools such as Massey operations and bar resolutions from algebraic topology.

Let $\phi\colon Q \to R$ be a local homomorphism. Shamash constructed universal resolutions for $R$-modules when $\phi$ is a hypersurface quotient \cite{Shamash:1969}, and these were clarified and extended to complete intersection quotients by Eisenbud using the theory of higher homotopies \cite{Eisenbud:1980}. Burke recognized in \cite{Burke:2015} that the higher homotopies are a manifestation of certain $\ai$-structures (we will return to these later in the introduction).  In the presence of a $Q$-free differential graded algebra resolution $A\to R$, and a $Q$-free differential graded $A$-module resolution $G$ of an $R$-module $M$, Iyengar constructed a bar resolution for $M$ over $R$ \cite{Iyengar:1997}. By endowing $A$ with an $\ai$-algebra structure, and $G$ with an $\ai$-module structure over $A$, Burke constructed a bar resolution even when associative multiplicative resolutions do not exist \cite{Burke:2015}. The resulting resolution is minimal when $M$ is Golod with respect to $\phi$, and is otherwise typically far from minimal. \cref{intro:transfer} recovers both the resolutions of Shamash and Eisenbud, when $\phi$ is a complete intersection quotient, and the bar resolutions of Iyengar and Burke, when $\phi$ is a Golod quotient.

In parallel, the universal resolutions introduced by Priddy over Koszul algebras \cite{Priddy:1970} have had far-reaching impact, not least as a computational tool. Our theory directly builds on and recovers his construction, while providing a common framework for the universal resolutions above. 

The technical foundation for our universal resolutions is in  \cref{sec:twisted-tensor}. Here we develop a general theory of twisted tensor products over a commutative ring $Q$. The data that goes into this construction is a curved differential graded coalgebra $C$ over $Q$, a quasi-isomorphism $\cobc{C}\to R$ from the cobar construction of $C$ to $R$, and a differential graded module structure over $\cobc{C}$ on $G$. These terms are defined in  \cref{sec:ai}. From this, in \cref{resn-trans-qi-coalgebras}, we construct a canonical  resolution
\[
R \totimes C \totimes G =(R \otimes C\otimes G,\partial^\tau) \xrightarrow{\ \simeq\ } M\,.
\]
The key to proving \cref{intro:transfer} is to show that $C$ can be defined in an explicit, canonical, and minimal way when $\varphi$ is strictly Koszul. We will return to this at the end of the introduction, with more context in hand.

\begin{center}
{\bf \textdiv} 
\end{center}

We turn our attention back to the Koszul homomorphisms.
The first half of this work develops the theory of  these maps; this part of the paper does not involve $\ai$-structures, using only ordinary differential graded algebras. 

Similar Koszul-type conditions have been considered by other authors \cite{Croll/etal:2020,Herzog/Iyengar:2005,Herzog/Reiner/Welker:1998,Myers:2021}, and we compare our definition with theirs in \cref{r_other_defs}. We motivate our condition as well by drawing connections with other areas, such as rational homotopy theory (\cref{r_formality,r_highly_connected}) and toric topology (\cref{r_toric_top}). 

We pay particular attention to the case that $Q$ is regular. In this situation, the resolutions constructed in \cref{intro:transfer} essentially depend only on the ring $R$, and they are always finitely determined. When $R$ is a local ring such that every Cohen presentation $\phi\colon Q\to \widehat{R}$ is a Koszul homomorphism, we say that $R$ is \emph{Cohen Koszul}; see \cref{sec_cohen_kos}. These rings enjoy excellent homological properties while being surprisingly abundant; they behave in many ways like classical Koszul algebras despite not necessarily being quadratic.

We show that Cohen Koszul local rings have rational Poincar\'e series that can be computed explicitly from their Koszul homology: 
\[
\ps^R_k(t)=\frac{(1+t)^e}{\sum_{i,w}(-1)^w \rank_k (\H_{i}(K^R)_{(w)})t^{i+w}}\,;
\]
see \cref{p_cohen_koszul_poincare}, where the notation is explained.

 \cref{sec:examples-koszul} is devoted entirely to examples, and we prove that surjective complete intersection homomorphisms are Koszul (\cref{e:koszul:ci}), along with surjective 
Golod homomorphisms (\cref{e:koszul:golod}), and surjective Gorenstein homomorphisms of projective dimension three (\cref{e:koszul:buchsbaum-eisenbud}). We exactly determine the local rings of codepth three or less that are Cohen Koszul in terms of the classification into the types described in \cite{Avramov/Kustin/Miller:1988}; we find that in every type except one the local ring is Cohen Koszul  (\cref{p_codepth_three}). Graded local rings having an almost linear resolution in the sense of \cite{Dao/Eisenbud:2022} are also Cohen Koszul (\cref{r_list_of_examples_almost_golod_gor}). We treat monomial rings in \cref{e:monomial}, making connections with combinatorial commutative algebra and with the topology of moment angle complexes. Monomial rings are classically Koszul exactly when they are quadratic \cite{Froeberg:1975}, while Cohen Koszul monomial rings need not be, and we produce many nontrivial examples in \cref{p_monomial_example}.
We further give examples  that illuminate how the Koszul condition relates with classical Koszulity,  formality, being quadratic, and various other technical conditions.

One of our main examples is a class of rings that we call \emph{almost Golod Gorenstein}, treated in \cref{e:minl-non-golod}. Within the class of Gorenstein local rings, these display extremal homological behavior analogous to Golod rings within the class of all local rings; cf.\@  \cref{p_almost_golod_poincare}. In \cref{t_almost_golod_gor_characterisation} we establish a characterization in terms of the derived fiber that is similar to Avramov's characterization of Golod rings \cite{Avramov:1986}, and, under some additional technical assumptions, we deduce that almost Golod Gorenstein rings are Cohen Koszul.

\begin{center}
$\pmb{\vee\hspace{-2.65mm}\mid}$ 
\end{center}

In the second half of the paper we study Koszul homomorphims from the perspective of $\ai$-structures on resolutions. An $\ai$-algebra is a complex $A$ equipped with multilinear operations $m_n\colon A^{\otimes n}\to A$ for $n\geqslant 2$ that together satisfy certain associativity conditions generalizing the definition of a differential graded algebra (which one recovers by assuming $m_n=0$ for $n\geqslant 3$). These objects were introduced by Stasheff to characterize loop spaces in algebraic topology \cite{Stasheff:1963a}.

Koszulity is well-known to be connected with formality (\cref{r_formality}), and in turn it has been understood since \cite{Kadeishvili:1982} that formality can be made visible through $\ai$-structures. In the present context, these structures are important because they carry the information necessary to construct the universal resolutions in \cref{intro:transfer} while being flexible enough that all resolutions can always be given $\ai$-structures.

An introduction to $\ai$-algebras and $\ai$-modules over commutative rings is given in \cref{sec:ai}. In \cref{sec:transfer-ai} we prove some quite general transfer results, in particular, constructing $\ai$-structures on minimal resolutions of local rings and modules.  
Burke was one of the first to develop and apply the theory of $\ai$-algebras over a commutative ring (rather than over a field) \cite{Burke:2015,Burke:2018}, and our treatment owes a substantial intellectual debt to his work. 

We develop the theory of cyclic $\ai$-algebras over commutative rings in \cref{cyclic_ai_algebra}. These were introduced by Kontsevich as part of his homological mirror symmetry program \cite{Kontsevich:1994}. These $\ai$-algebras possess extra structure that takes advantage of the Poincar\'e duality on the minimal resolution of a Gorenstein ring, and we apply this theory to almost Golod Gorenstein rings.

With this perspective in hand we return to Koszul homomorphisms in \cref{sec:koszul-via-ai}. The next result shows that, at the derived level, Koszul homomorphisms may be thought of as deformations of classical Koszul algebras. A more precise and more general statement is given in  \cref{koszul-ai-presentation}. We write $\tmod{V}$ for the tensor algebra $\bigoplus_{n\geqslant 0}V^{\otimes_Q n}$ on a graded $Q$-module $V$.

\begin{introthm} \label{intro:Koszul-ai}
A surjective local homomorphism $\phi \colon Q \to R$ is Koszul if and only if there is a positively graded, degreewise finite rank free $Q$-module $V$, a direct summand $W\subseteq V \otimes_Q V$, and an $\ai$-structure $\{m_n\}$ on $A=\tmod{V}/(W)$  such that
\begin{enumerate}
\item the induced quotient $A\to R$ is an $\ai$-algebra quasi-isomorphism,
\item modulo the maximal ideal of $Q$, the $\ai$-structure $\{m_n\}$ on $A$ agrees with the usual algebra structure on $\tmod{V}/(W)$, that is, 
\begin{equation*}
m_2 \otimes_Q k=\mu \otimes_Q k \quad\text{and}\quad m_n\otimes_Q k = 0 \ \text{ for }\ n \neq 2\,,
\end{equation*}
where $\mu$ is the usual product on the quotient of a tensor algebra,
\item the $k$-algebra $\tmod{V \otimes_Q k}/(W \otimes_Q k)$ is Koszul with this algebra structure.
\end{enumerate}
\end{introthm}

Coming full circle, we are now able to describe the coalgebra $C$ that appears in \cref{intro:transfer}. 
Fixing a Koszul homomorphism $\phi$ with $V$ and $W$ as in  \cref{intro:Koszul-ai}, we define
\[
C\coloneqq\bigoplus_n\Big({\textstyle\bigcap\limits_{i+2+j=n} V^{\otimes_Q i} \otimes_Q  W \otimes_Q  V^{\otimes_Q j}}\Big)\,.
\]
This is modeled on the work of Priddy \cite{Priddy:1970}. By construction, the graded $Q$-dual $C^\vee$ is the quadratic dual $\tmod{V^\vee}/(W^\perp)$ of the algebra $\tmod{V}/(W)$. The strict Koszul condition introduced in \cref{sec:special-koszul} guarantees that the $\ai$-structure on $A$ induces the structure of a curved differential graded coalgebra on $C$; see \cref{d:special-Koszul}. We think of $C$ as Koszul dual to $R$ \emph{relative to $Q$}, as justified by \cref{priddy-subcoalgebra}. 

We conclude the paper with a reexamination of examples, in  \cref{sec:examples-special-koszul}. We start by showing that certain deformations of classical Koszul algebras yield strictly Koszul homomorphisms, and we obtain resolutions that directly deform the original resolutions of Priddy. We study surjective Golod homomorphisms in \cref{e:special-koszul:golod}; in this case $C=\bc{A}$ is the bar construction of the $\ai$-algebra $A$, and we recover the bar resolution of Iyengar and Burke. In \cref{e:koszul-ai:buchsbaum-eisenbud} we show that surjective Gorenstein homomorphisms of projective dimension three are strictly Koszul, and describe $C$ as the dual of a noncommutative hypersurface. We show that surjective complete intersection homomorphisms are strictly Koszul in \cref{e:special-koszul:ci}, and we show that $C$ is the free divided power algebra on $V$; our twisted tensor products encode the theory of higher homotopies, and the resulting resolutions recover those of Shamash and Eisenbud. We end by treating almost Golod Gorenstein rings in \cref{e:koszul-ai:minl-non-golod}; this requires a substantial amount of machinery, and the result is a large class of interesting Gorenstein rings over which we have explicit, small, universal resolutions.

\begin{ack} 
We owe thanks to Steven Amelotte for many useful conversations, in particular improving our understanding of moment angle manifolds, the combinatorics of simplicial complexes, and almost linear resolutions. 
We thank Alexander Berglund for sharing important insights on Koszul-type phenomena in algebra and topology. For many fruitful discussions on systems of higher homotopies, and for allowing us to share some of these ideas here, we are grateful to Eloísa Grifo. We thank Srikanth Iyengar for his encouragement and many useful discussions on $\ai$-structures. We also thank Trung Chau, Michael DeBellevue, and Keller VandeBogert for helpful conversations regarding concrete examples. We are also very grateful to the referee, whose close reading and many helpful comments improved the paper substantially.

Part of this work was done at the Hausdorff Research Institute for Mathematics, Bonn, when  Briggs, Letz, and Pollitz were at the ``Spectral Methods in Algebra, Geometry, and Topology" trimester, funded by the Deutsche Forschungsgemeinschaft under Germany's Excellence Strategy--EXC-2047/1--390685813. 
Briggs is supported by the European Union under the Grant Agreement no.\ 101064551, and part of this work was completed when he was funded by NSF grant DMS-1928930. Letz was partly supported by the Deutsche Forschungsgemeinschaft (SFB-TRR 358/1 2023 - 491392403) and by the Alexander von Humboldt Foundation in the framework of a Feodor Lynen research fellowship endowed by the German Federal Ministry of Education and Research. Pollitz was supported by NSF grants DMS-1840190, DMS-2002173, and DMS-2302567. 
\end{ack} 

\section{Koszul homomorphisms} \label{sec:Koszul}
In this section we discuss the Koszul property in various settings, starting from the classical notion for an algebra over a field and leading up to a definition of a Koszul local homomorphism. Examples have been collected in the \cref{sec:examples-koszul}. While later sections exploit the machinery of $\ai$-algebras, this section requires only knowledge of differential graded (dg) algebras; a suitable reference for the latter is \cite{Avramov:1998}. 

We fix a field $k$, and work with local rings having residue field $k$. We also consider modules with two $\BZ$-gradings, the \emph{weight grading} and the \emph{homological grading}. The weight grading is denoted $M = M_{(\star)}$ and the corresponding shift $M(d)$ is given by $M(d)_{(w)} = M_{(w+d)}$. For the homological grading we write $M = M_\bullet$ and the suspension $\susp^d M$ is given by $(\susp^d M)_i = M_{i-d}$. The homological degree of an element $m\in M$ is denoted $|m|$. We assume that the two gradings are \emph{compatible} in the sense that $M$ is bigraded by its submodules $M_{i,(w)} \coloneqq M_i \cap M_{(w)}$. If $M$ is a complex, the differential $\partial^M$ should preserve the weight grading and decrease the homological grading by one, and we equip $\susp M$ with the differential $
\partial^{\susp M} \coloneqq -\partial^M$. 

\begin{definition}\label{koszul-algebra}
An augmented $k$-algebra $K$ is \emph{Koszul (over $k$)} if it admits an algebra grading $K=\bigoplus_{w\geqslant 0} K_{(w)}$, known as a \emph{weight grading}, such that $K_{(0)}=k$ and such that the minimal resolution of $k$ is linear with respect to this grading. 

\end{definition}
 
\begin{remark}
\cref{koszul-algebra} is essentially the classical definition of a Koszul algebra due to Priddy \cite[Section~2]{Priddy:1970}, see also \cite{Lofwall:1986,Froeberg:1999}, except that we do not consider the weight grading to be part of the given data. We emphasize that the additional grading may be \emph{different} to the given one; cf.\@ \cref{r_different_grading}. By \cite[Secton~2.3]{Beilinson/Ginzburg/Soergel:1996} a Koszul algebra is quadratic with respect to this new grading; that is, generated as an associative $k$-algebra by elements of weight one, subject to relations of weight two. For example if $U$ is a graded vector space then the trivial extension algebra $K=k \ltimes U$ is a graded augmented algebra, and the weight grading $K_{(0)}=k$ and $K_{(1)}=U$ makes $K$ Koszul; see \cref{c:trivial_extension}. 
\end{remark}

Next we define a Koszul property for dg algebras. This definition appears implicitly in the work of Berglund \cite{Berglund:2014}, where it is applied to Sullivan models for topological spaces; it is \emph{not} the same as the Koszul property introduced in \cite{He/Wu:2008}, which is too general for our purposes.

\begin{chunk}\label{c_formal}
Let $A$ and $B$ be dg $k$-algebras. Recall that $A$ is \emph{quasi-isomorphic} to $B$, denoted $A \simeq B$, if there exists a zig-zag of quasi-isomorphisms of dg $k$-algebras connecting $A$ and $B$. A dg $k$-algebra $K$ is called \emph{formal} if it is quasi-isomorphic to $\H(K)$.
\end{chunk}

\begin{definition}\label{koszul-dg-algebra}
An augmented dg $k$-algebra $K$ is \emph{Koszul} if it is formal and $\H(K)$ is Koszul in the sense of \cref{koszul-algebra}.
\end{definition}

\begin{remark}\label{r_formality} 
It is well-known that formality is closely related with the Koszul property. In fact, an augmented $k$-algebra $K$ is Koszul in the sense of \cref{koszul-algebra} if and only if the dg $k$-algebra $\RHom{K}{k}{k}$ is formal; see \cite{Gugenheim/May:1974} and also \cite[2.2]{Keller:2002} and \cite[Theorem~2.9]{Berglund:2014}. This condition is sometimes called \emph{coformality} of $K$. From this perspective, a dg $k$-algebra $K$ is Koszul in the sense of \cref{koszul-dg-algebra} if and only if it is both formal and coformal. 
\end{remark}

Before introducing the Koszul property for local homomorphisms, we need to recall the notion of the derived fiber. Let $\phi \colon Q \to R$ be a local homomorphism of commutative noetherian local rings having maximal ideals $\fm_Q$ and $\fm_R$, respectively, and common residue field $k$. 

Let $A \to R$ be a dg algebra resolution of $R$ over $Q$; that is, $A$ is a dg algebra concentrated in non-negative degrees, such that $A$ is degreewise a free $Q$-module, and $A\to R$ is a morphism of dg algebras inducing an isomorphism in homology. The \emph{derived fiber} of $\phi$ is the dg $k$-algebra 
\[
R \lotimes_Q k \coloneqq A\otimes_Q k\,.
\]
Up to a zig-zag of quasi-isomorphisms of dg $k$-algebras, $R \lotimes_Q k$ is independent of the choice of $A$. For more information see \cite{Avramov:1986}. We remark that one can equally well use a different species of model for the resolution $A$, such as simplicial algebras or $\ai$-algebras, and obtain an equivalent definition of Koszul homomorphism. Indeed, we make use of $\ai$-models in \cref{sec:koszul-via-ai}. 

\begin{definition}\label{koszul-ring-map}
Let $\phi \colon Q \to R$ be a finite local homomorphism. We say that $\phi$ is \emph{Koszul} if $R \lotimes_Q k$ is a Koszul dg $k$-algebra; that is, $R \lotimes_Q k$ is formal and its homology $\Tor{Q}{R}{k}$ is Koszul in the sense of \cref{koszul-algebra}. 
\end{definition}

According to \cref{koszul-algebra}, when $\phi$ is Koszul, the Tor algebra $\Tor{Q}{R}{k}$ admits a quadratic presentation, albeit not necessarily generated by elements in homological degree one.

Taking $Q=k$ to be a field, we recover the classical definition: The homomorphism $k\to R$ is Koszul if and only if $R$ is a Koszul $k$-algebra; cf.\@ \cref{koszul-algebra}. 

The examples given in the next section show that Koszul homomorphisms are extremely common. In particular, this class includes all flat local homomorphisms whose fibers are (classically) Koszul; all complete intersection and all Golod homomorphisms; Cohen presentations of most local rings having codepth at most $3$, and of generic Gorenstein rings. 

\begin{remark}\label{r_minres}
When the minimal $Q$-free resolution $A$ of $R$ admits a dg algebra structure, the derived fiber $R\lotimes_Q k= A\otimes_Rk$ has zero differential, and is automatically formal. This is the case for complete intersection homomorphisms (\cref{e:koszul:ci}) and when $\pdim_Q(R)\leqslant 3$ (\cref{p_codepth_three}). When $\phi$ is Golod the minimal resolution typically does not support a dg algebra structure, cf.\@ \cref{r_no_dg_structure}, but nonetheless $R\lotimes_Q k$ is formal (\cref{e:koszul:golod}). The monomial rings presented in \cref{e:koszul:nonexample,e:koszul:nonexample'} yield non-Koszul homomorphisms; the corresponding minimal resolution does not admit a dg algebra structure in the former example (see \cite[2.2]{Avramov:1981}), but it does in the latter example. 
\end{remark}

\subsection{Cohen Koszul local rings} 
\label{sec_cohen_kos}
Recall that every local ring $R$ admits a Cohen presentation, that is, a surjection $\phi\colon Q\to \widehat{R}$ from a regular local ring $Q$, and one may assume that $\phi$ is minimal in the sense that $\ker(\phi) \subseteq \fm_Q^2$. 

\begin{definition}\label{d_Cohen_Koszul}
A local ring $R$ is \emph{Cohen Koszul} if every homomorphism $\phi\colon Q\to \widehat{R}$ is Koszul, where $Q$ is a regular local ring and $\phi$ is surjective with $\ker(\phi) \subseteq \fm_Q^2$. In other words, every minimal Cohen presentation of $R$ is Koszul.
\end{definition}

\begin{remark}\label{r_characteristic_and_koszul_complex}
For equicharacteristic rings the minimal Cohen presentation $Q\to \widehat{R}$ is essentially unique. In this situation, if $R$ is already a quotient of a regular local ring, then by \cref{r_completion} there is no need to complete $R$ to determine whether $R$ is Cohen Koszul.

If we assume that $R$ contains its residue field $k$, then the Koszul complex $K^R$ on the maximal ideal of $R$ is a dg $k$-algebra. It is well-known that
\[
K^R\simeq \widehat{R}\lotimes_Q k
\]
as dg $k$-algebras, see for example \cite[Theorem~8.1]{Avramov/Buchweitz/Iyengar/Miller:2010}. Therefore in this situation we can say that $R$ is Cohen Koszul exactly when $K^R$ is a Koszul dg $k$-algebra. 

If $R$ does not contain its residue field, the fact that $K^R$ is not a dg $k$-algebra introduces subtleties. The distinction between formality of dg $k$-algebras and formality of dg rings means that it is not clear whether \cref{d_Cohen_Koszul} is independent of the choice of Cohen presentation. In all of our examples however, the choice will be irrelevant. 

Complete intersection rings are Cohen Koszul by \cref{e:koszul:ci}, Golod rings are Cohen Koszul by \cref{e:koszul:golod}, and most rings of codepth $3$ are Cohen Koszul according to \cref{p_codepth_three}.
\end{remark}

Cohen Koszul local rings have rational Poincar\'e series. Recall that the Poincar\'e series of a finitely generated $R$-module $M$ is 
\[
\ps^R_M(t)=\sum_{n \in \mathbb{Z}} \rank_k(\Tor[n]{R}{M}{k}) t^n\,.
\]

\begin{proposition} \label{p_cohen_koszul_poincare}
Let $R$ be a Cohen Koszul local ring with embedding dimension $e$ and residue field $k$. Fix a weight grading making the Koszul homology $\H(K^R)$ a Koszul $k$-algebra. Then
\[
\ps^R_k(t)=\frac{(1+t)^e}{\sum_{i,w}(-1)^w \rank_k (\H_i(K^R)_{(w)})t^{i+w}}\,.
\]
\end{proposition}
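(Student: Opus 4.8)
The plan is to compute the Poincaré series of $k$ over $R$ by passing through a minimal Cohen presentation $\phi\colon Q\to \widehat{R}$ with $Q$ regular. Since completion does not change $\ps^R_k(t)$, we may work with $\widehat{R}$ and assume $R$ is complete. Let $e=\operatorname{edim}R$, so that $Q$ is regular of dimension $e$ and $\ps^Q_k(t)=(1+t)^e$. The homomorphism $\phi$ is Koszul by hypothesis, so the derived fiber $F=\widehat R\lotimes_Q k$ is formal with $\H(F)=\Tor QRk\cong\H(K^R)$, and this Tor algebra is Koszul with respect to the chosen weight grading. The first key step is to invoke the change-of-rings spectral sequence relating $\Tor[*]Rk{}$, $\Tor[*]Qk{}$, and $\Tor[*]{\widehat R}{Q}{k}=\H(F)$; since $\phi$ is surjective this takes the familiar form $\operatorname{Tor}^{\H(F)}(k,k)\Rightarrow \operatorname{Tor}^R(k,k)$, and one needs this spectral sequence to degenerate. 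The mechanism for degeneration is precisely formality of $F$ together with Koszulness of $\H(F)$: this is the ``small homomorphism'' phenomenon, and the cleanest route is to use that a Koszul (hence small) homomorphism gives the Golod-type / large homomorphism equality of Poincaré series $\ps^R_k(t)=\ps^Q_k(t)\cdot\ps^R_Q(t)$ — but with the correct interpretation of $\ps^R_Q(t)$ via the Koszul dual.

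Concretely, the second step is to identify $\ps^R_Q(t)$, i.e.\ the generating function recording the homology of the cobar / Koszul dual of $F$. Because $F$ is formal, $\widehat R\lotimes_Q k$ is quasi-isomorphic as a dg $k$-algebra to $\H(K^R)$, so $\RHom{\widehat R\lotimes_Q k}{k}{k}\simeq \RHom{\H(K^R)}{k}{k}$. Because $\H(K^R)$ is a Koszul algebra with respect to the weight grading, its Ext algebra is the quadratic dual, and its bigraded Hilbert series satisfies the Fröberg-type relation: if $H_{\H(K^R)}(s,t)=\sum_{i,w}\rank_k(\H_i(K^R)_{(w)})s^i t^w$ denotes the bigraded Hilbert series (homological degree $i$, weight $w$), then Koszulness yields
\[
H_{\H(K^R)}(s,t)\cdot H_{\operatorname{Ext}_{\H(K^R)}(k,k)}(-s,t)=1
\]
after the standard sign substitution that turns the weight grading into an alternating sum. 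Setting the single variable so that a class of homological degree $i$ and weight $w$ contributes $t^{i+w}$ — which matches how these classes sit inside the homological grading of the bar construction — converts the denominator into $\sum_{i,w}(-1)^w\rank_k(\H_i(K^R)_{(w)})t^{i+w}$, exactly the expression in the statement. Thus $\ps^R_Q(t)$, suitably interpreted, equals $1\big/\sum_{i,w}(-1)^w\rank_k(\H_i(K^R)_{(w)})t^{i+w}$.

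The third step is to assemble these: by the degeneration of the change-of-rings spectral sequence (equivalently, by $\phi$ being a Koszul, hence small, homomorphism, cf.\ the Avramov machinery on small homomorphisms and the results of Section~\ref{sec:Koszul} together with \cref{intro:transfer}\cref{introitem2}), we get the multiplicativity $\ps^R_k(t)=\ps^Q_k(t)\cdot\ps^R_Q(t)=(1+t)^e\big/\sum_{i,w}(-1)^w\rank_k(\H_i(K^R)_{(w)})t^{i+w}$, which is the claimed formula. In fact the factor $\sum_i\rank_Q(C_i)t^i$ from \cref{intro:transfer}\cref{introitem2} is exactly $\ps^R_k(t)/\ps^Q_k(t)$, and the content here is identifying that quotient — the Hilbert series of the Koszul dual coalgebra $C$ — with the displayed alternating sum, which is immediate from $C^\vee$ being the quadratic dual of the Koszul algebra $\H(K^R)$.

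The main obstacle I expect is bookkeeping of the two gradings: one must check that collapsing the bigrading $(i,w)$ to the single homological degree via $i\mapsto t^i$ on the $\operatorname{Tor}$ side but $(i,w)\mapsto t^{i+w}$ on the Koszul-dual side is the correct and consistent substitution — this is where the linearity of the minimal resolution of $k$ over the Koszul algebra $\H(K^R)$ (Definition~\ref{koszul-algebra}) is used, since linearity forces the homological degree over the Ext algebra to equal the weight, so that a generator contributing weight $w$ sits in total homological degree $w$ over $R$ and total degree $i+w$ after accounting for the internal degree $i$ coming from $\H(K^R)\subseteq K^R$. Equivalently, the subtlety is verifying that the spectral sequence genuinely degenerates for a Koszul homomorphism; but this is exactly the force of formality plus classical Koszulity of the homology, and can be cited from the structural results established earlier (smallness of Koszul homomorphisms, and the minimality statement in \cref{intro:transfer}\cref{introitem2} applied to $M=k$, which is inert).
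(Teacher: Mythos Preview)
Your overall strategy---the Koszul relation $\H_T(s,t)\ps^T_k(-s,t)=1$ for $T=\H(K^R)$, combined with a change-of-rings multiplicativity $\ps^R_k(t)=(1+t)^e\cdot\ps^T_k(t,t)$---is exactly what the paper does. However, two of your justifications for the multiplicativity step are faulty. First, the implication ``Koszul hence small'' is nowhere established and is not true in general; indeed \cref{intro:transfer}\cref{introitem2} and \cref{t_inert} impose smallness \emph{in addition} to the strictly Koszul hypothesis. What is true, and suffices here, is that any \emph{minimal Cohen presentation} is small regardless of the Koszul property (see \cref{c:small}). Second, your appeal to \cref{intro:transfer}\cref{introitem2} is circular: that theorem requires $\phi$ to be \emph{strictly} Koszul, which is strictly stronger than Cohen Koszul, is not assumed in the proposition, and is only developed later in the paper.

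The paper sidesteps both issues by using formality directly: formality of $\widehat R\lotimes_Q k$ alone forces degeneration of the spectral sequence \cite[6.2.1]{Avramov:1981}, and then \cite[6.2~(b')]{Avramov:1981} yields $\ps^R_k(t)=(1+t)^e\ps^T_k(t,t)$ without invoking smallness or the later Priddy-coalgebra machinery. A minor bookkeeping slip: in your conventions ($s$ homological, $t$ weight) the Koszul relation should read $H_T(s,t)\cdot\ps^T_k(s,-t)=1$, with the sign on the \emph{weight} variable---your displayed formula puts it on $s$, though your surrounding prose (``turns the weight grading into an alternating sum'') has it right, and this is what produces the $(-1)^w$ after specializing both variables to $t$.
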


\begin{remark}
We note that since $\H_*(K^R)$ is generated in weight $1$, the rank of  $\H_i(K^R)_{(w)}$ is equal to the rank of $[\H_{>0}(K^R)^{w}/\H_{>0}(K^R)^{w+1}]_i$. Therefore is is not necessary to \emph{choose} a weight grading to calculate the Poincar\'e series above.
\end{remark}

\begin{proof}
Let $T=\H(K^R)$, bigraded by homological degree and by weight, and write
\[
\H_T(s,t) \coloneqq \sum_{i,w} \rank_k(T_{i,(w)})t^is^w \quad \text{and} \quad \ps^T_k(s,t) \coloneqq \sum_{i,w} \rank_k( \Tor[w]{T}{k}{k}_{i})t^is^w\,.
\]
In $\Tor[w]{T}{k}{k}_{i}$, the index $w$ is the usual homological grading of $\operatorname{Tor}$, and $i$ is the extra grading that comes from the homological grading on $T$. Since $T$ is Koszul with respect to its weight grading, the usual computation of the Poincar\'e series of a Koszul algebra shows that $\H_T(s,t)\ps^T_k(-s,t)=1$; see \cite{Lofwall:1986}. 

Let $Q\to \widehat{R}$ be a minimal Cohen presentation. Formality of $\widehat{R}\lotimes_Qk$ implies that the spectral sequence \cite[6.2.1]{Avramov:1981} is degenerate, and so from \cite[6.2~(b')]{Avramov:1981} we obtain the first equality below, which yields the desired series
\[
\ps^R_k(t)=(1+t)^e\ps^T_k(t,t)=\frac{(1+t)^e}{\H_T(-t,t)}\,.\qedhere
\]
\end{proof}

\cref{p_cohen_koszul_poincare} recovers the known Poincar\'e series for complete intersection rings, Golod rings, and almost Golod Gorenstein rings; see \cref{sec:examples-koszul} for the latter.

There are a number of results that apply to certain subsets of Cohen Koszul rings, motivating the study of whether such a property holds for these rings in general.  We highlight a couple instances below. 

\begin{remark}
    Recently, Brown--Dao--Sridhar have shown that over complete intersection and Golod rings, the ideals of minors of differentials in minimal free resolutions are eventually two-periodic \cite{Brown/Dao/Sridhar:2023}. It would be worthwhile, and seems plausible  (in light of the structural result in \cref{priddy-resolution}), to determine whether (strictly) Cohen Koszul rings satisfy this property more generally. 
\end{remark}

\begin{remark}
Lower bounds on the Loewy length of the homology module of perfect complexes are of interest in both algebra and topology; see, for example, \cite{Avramov/Buchweitz/Iyengar/Miller:2010,Allday/Puppe:1993,Carlsson:1983,Iyengar/Walker:2018,Walker:2017}. For Cohen Koszul rings one can establish such bounds. 

Let $R$ be a local ring with residue field $k$, and let $k[\chi_1,\ldots,\chi_n]$ denote a maximal polynomial subalgebra of the graded $k$-algebra $\Ext{R}{k}{k}$, generated by elements in even degree. For example, if $R$ is complete intersection, then $n$ is the codimension of $R$. If $R$ is Cohen Koszul, then for any finite free $R$-complex $F$ with $\H(F)\neq 0$ one has the inequality
\[
\sum_{n\in \mathbb{Z}} \ell\ell_R\H_n(F)\geqslant n+1\,.
\]
One can use similar ideas to those in \cite{Avramov/Buchweitz/Iyengar/Miller:2010}, as well as \cite{Briggs/Grifo/Pollitz:2024}, to establish this bound; here, however, formality of the derived fiber of a Cohen presentation of $R$ is a main ingredient. Moreover, when $R$ is complete intersection it agrees with the common bounds from \cite{Avramov/Buchweitz/Iyengar/Miller:2010,Briggs/Grifo/Pollitz:2024}.
\end{remark}

\subsection{Properties of Koszul homomorphisms} 

Before moving on to examples we establish some basic change of rings properties for Koszul homomorphisms. First, we note that being Koszul is invariant under certain flat base changes, and in particular under completion.

\begin{proposition}\label{r_completion}
Given a finite local homomorphism $\phi\colon Q\to R$ and a flat local homomorphism $\psi\colon Q\to Q'$ inducing an isomorphism on residue fields, $\phi$ is Koszul if and only if $\phi\otimes Q' \colon Q'\to R\otimes_Q Q'$ is Koszul.
\end{proposition}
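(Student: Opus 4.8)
The plan is to show that the derived fiber is unchanged by the flat base change $\psi$, once one identifies the residue fields $k$ and $k'$ of $Q$ and $Q'$. There is essentially no homological obstacle here; the only point requiring care is the bookkeeping needed to confirm that $\phi\otimes Q'$ is again a \emph{finite local} homomorphism, so that \cref{koszul-ring-map} even applies to it.

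First I would set up notation and check hypotheses. Write $\phi'\coloneqq\phi\otimes Q'\colon Q'\to R'$ with $R'\coloneqq R\otimes_Q Q'$. Finiteness of $\phi'$ is immediate from base change. For locality, note that $R'$ is module-finite over the local ring $Q'$, hence semilocal, and its maximal ideals biject with those of $R'/\fm_{Q'}R' = R'\otimes_{Q'}k'$. Now $R'\otimes_{Q'}k' = R\otimes_Q k'$, and since $\psi$ induces an isomorphism $k\xrightarrow{\ \sim\ }k'$ we may identify this ring with $R\otimes_Q k = R/\fm_Q R$, which is local as a quotient of $R$ (using $\fm_Q R\subseteq\fm_R$, as $\phi$ is local). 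Thus $R'$ is local, and its residue field is $k'$, which we identify with $k$ via $\psi$; so $\phi'$ is a finite local homomorphism with residue field $k$.

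Next I would base change a resolution. Choose a dg algebra resolution $A\xrightarrow{\ \simeq\ }R$ over $Q$ with $A$ concentrated in non-negative degrees and degreewise free over $Q$. As $\psi$ is flat, $A\otimes_Q Q'$ is a complex of free $Q'$-modules with $\H(A\otimes_Q Q') = \H(A)\otimes_Q Q' = R'$, and the morphism $A\to R$ base changes to a dg algebra map $A\otimes_Q Q'\xrightarrow{\ \simeq\ }R'$; so $A\otimes_Q Q'$ is a dg algebra resolution of $R'$ over $Q'$. Therefore the derived fiber of $\phi'$ may be computed from it:
\[
R'\lotimes_{Q'}k' \,=\, (A\otimes_Q Q')\otimes_{Q'}k' \,=\, A\otimes_Q k' \,\cong\, A\otimes_Q k \,=\, R\lotimes_Q k\,,
\]
where the third step is the isomorphism of dg $k$-algebras induced by $k\xrightarrow{\ \sim\ }k'$.

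Finally I would conclude. The displayed isomorphism of dg $k$-algebras shows that $R'\lotimes_{Q'}k'$ is formal if and only if $R\lotimes_Q k$ is formal, and it induces an isomorphism of graded $k$-algebras $\Tor{Q'}{R'}{k'}\cong\Tor{Q}{R}{k}$, so one of these is Koszul in the sense of \cref{koszul-algebra} precisely when the other is. By \cref{koszul-ring-map}, $\phi'$ is Koszul if and only if $\phi$ is Koszul. As noted, the only delicate step is the locality/residue-field bookkeeping in the second paragraph; the rest is the formal observation that the derived fiber is a flat-base-change invariant along a residue-field isomorphism.
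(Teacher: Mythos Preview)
Your proof is correct and follows the same route as the paper: base change a dg algebra resolution $A\to R$ along the flat map $\psi$ to obtain a dg algebra resolution $A\otimes_Q Q'\to R\otimes_Q Q'$, and then observe $(A\otimes_Q Q')\otimes_{Q'}k'\cong A\otimes_Q k$ as dg $k$-algebras. The paper's proof is two sentences recording exactly this; you have added the verification that $\phi\otimes Q'$ is again finite and local with residue field $k$, which the paper leaves implicit.
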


\begin{proof}
The natural map $R\lotimes_{Q} k\to (R\otimes_Q Q')\lotimes_{Q'} k$ is a quasi-isomorphism of dg $k$-algebras. Indeed, if $A$ is a dg algebra resolution of $R$ over $Q$, then $A\otimes_QQ'$ is a dg algebra resolution of $R\otimes_Q Q'$ over $Q'$, and $(A\otimes_QQ')\otimes_{Q'}k \cong A\otimes_Q k$.
\end{proof}

The next proposition will often be useful in reducing the dimension of $Q$ or $R$.

\begin{proposition}\label{p_change_of_rings}
Let $\phi\colon Q\to R$ be a finite local homomorphism, and let $x\in \fm_Q$ and $y\in \fm_R$.
\begin{enumerate}[(1)]
\item\label{nzd_p1} 
If $x$ is regular on $Q$ and $y$ is regular on $R$, with $\phi(x)=y$, then $\phi$ is Koszul if and only if the map of quotients $Q/(x)\to R/(y)$ is Koszul.
\item\label{nzd_p2}
If $y$ is regular on $R$ then $\phi$ is Koszul if and only if the composition $Q\to R/(y)$ is Koszul.
\item\label{nzd_p3} 
If $\phi(x)=0$ and $x$ generates a free $R$-module summand of $\ker(\phi)/\ker(\phi)^2$, then $\phi$ is Koszul if and only if the induced map $Q/(x)\to R$ is Koszul. 
\end{enumerate}
\end{proposition}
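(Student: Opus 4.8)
The plan is to prove all three parts by producing, from a dg algebra resolution on one side, a dg algebra resolution on the other side, and then tracking how the derived fiber changes. Throughout, fix a dg algebra resolution $A \xrightarrow{\ \simeq\ } R$ over $Q$ which is degreewise free over $Q$. The key observation is that the derived fiber $R \lotimes_Q k = A \otimes_Q k$ is unchanged by any operation that does not alter $A \otimes_Q k$ up to quasi-isomorphism of dg $k$-algebras, and that the Koszul property of $\phi$ depends only on this object. So in each case the strategy is to exhibit a comparison quasi-isomorphism of dg $k$-algebras between the two derived fibers in question.

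For \cref{nzd_p2}, since $y \in \fm_R$ is a nonzerodivisor, the Koszul complex $A\langle z \mid \partial z = \tilde y\rangle$ obtained by adjoining a variable $z$ in homological degree $1$ killing a lift $\tilde y \in A_0 = Q$ of... wait — $y \in R$, not $Q$, so more carefully: let $A' = A\langle z\rangle$ with $|z|=1$ and $\partial z$ a cycle in $A_1$ lifting $y$ along $A \to R$; this requires choosing such a lift, which exists because $A_0 \to R$ is surjective and $A_1 \to \ker(A_0 \to R)$... in fact it is cleaner to first replace $A$ by a resolution where $y$ is visibly hit. The point is that $A' \xrightarrow{\ \simeq\ } R/(y)$ is a dg algebra resolution over $Q$ of the composite $Q \to R/(y)$, and $A' \otimes_Q k = (A\otimes_Q k)\langle z\rangle$ is the derived fiber of the composite. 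Now $(A\otimes_Q k)\langle z \rangle \simeq (A \otimes_Q k) \otimes_k k[z]/(z^2)$-type statements let one compare: formality and Koszulness of $A\otimes_Q k$ are equivalent to those of $(A\otimes_Q k)\langle z\rangle$, because adjoining an exterior variable on a cycle mapping to a nonzerodivisor corresponds on homology to the trivial extension $\H(A \otimes_Q k) \mapsto \H(A\otimes_Q k)\langle z\rangle$, whose Koszulness and formality transfer both ways (using \cref{c:trivial_extension} and that $\RHom{-}{k}{k}$ of a trivial extension is a polynomial extension, which is formal iff the base is). Part \cref{nzd_p1} then follows by combining \cref{nzd_p2} applied on the $R$-side with \cref{r_completion}-style reasoning, or directly: if $\phi(x)=y$ with $x,y$ nonzerodivisors, then $A/xA \xrightarrow{\ \simeq\ } R/(y)$ is a dg algebra resolution over $Q/(x)$, and $(A/xA)\otimes_{Q/(x)} k = A \otimes_Q k$, so the two derived fibers literally coincide.

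For \cref{nzd_p3}, the hypothesis that $x \in \ker(\phi)$ generates a free summand of $\ker(\phi)/\ker(\phi)^2$ means $x$ is part of a minimal generating set of $\ker(\phi)$ that behaves like a nonzerodivisor-type element in the presentation; concretely, one can choose the dg algebra resolution $A$ of $R$ over $Q$ to contain a Koszul variable $e$ with $|e|=1$ and $\partial e = x$, and the freeness-of-summand hypothesis ensures that factoring this out gives a dg algebra resolution of $R$ over $Q/(x)$. Indeed $\bar A \coloneqq A \otimes_Q Q/(x) \xrightarrow{\ \simeq\ } R$ should still be acyclic over $Q/(x)$ — this is where the summand condition is used, guaranteeing no new homology is created. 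Then the derived fiber of $Q/(x) \to R$ is $\bar A \otimes_{Q/(x)} k = A \otimes_Q k$, again literally the same dg $k$-algebra, so the Koszul properties coincide.

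The main obstacle is the bookkeeping in \cref{nzd_p3}: justifying that one may arrange the dg algebra resolution $A$ to contain an exterior variable $e$ with $\partial e = x$ and that quotienting by $x$ preserves acyclicity over $Q/(x)$. This is the standard "Tate variable" or "reduction of a free summand of relations" argument, but it must be phrased so that the dg algebra structure — not just the complex — descends; the free-summand hypothesis is exactly what makes $x$ a nonzerodivisor on the relevant piece and prevents the appearance of extra $\operatorname{Tor}$. The transfer of formality across the exterior extension in \cref{nzd_p2} is the other delicate point, and I would handle it via \cref{r_formality}: passing to $\RHom{}{}{}$-duals turns the trivial extension $\H(A\otimes_Q k)\langle z\rangle$ into a polynomial ring extension $\Ext{}{k}{k}[\chi]$ with $|\chi|$ even, and a polynomial extension of a dg algebra is formal if and only if the original is, giving coformality (hence, with formality, Koszulness) on the nose.
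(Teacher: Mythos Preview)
Your treatments of parts \cref{nzd_p1} and \cref{nzd_p2} follow the paper's line. For \cref{nzd_p1} the derived fibers literally agree. For \cref{nzd_p2}, adjoin an exterior variable $e$ with $\partial e=\tilde y$ for a lift $\tilde y\in \fm_Q A_0$ (not $A_1$); then $\partial e$ dies after $-\otimes_Q k$, and $R/(y)\lotimes_Q k\simeq (R\lotimes_Q k)\otimes_k\Lambda_k(e)$. Your transfer step via ``trivial extensions'' and polynomial $\Ext$-duals is tangled: $\H(A\otimes_Qk)\otimes_k\Lambda_k(e)$ is a tensor product with an exterior algebra, not a trivial extension $B\ltimes U$, so \cref{c:trivial_extension} does not apply. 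The paper instead uses the simpler fact that a tensor product of dg $k$-algebras is formal, respectively Koszul, if and only if both factors are.

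Part \cref{nzd_p3} has a genuine gap. Your central claim, that $\bar A\coloneqq A\otimes_Q Q/(x)$ is a dg algebra resolution of $R$ over $Q/(x)$, is false. Since $\phi(x)=0$, the element $x$ annihilates $R$, so $\Tor[1]{Q}{R}{Q/(x)}$ contains a copy of $R$ and $\bar A$ cannot be acyclic. Concretely, take $Q=k\llbracket x\rrbracket$ and $R=k$: then $A=Q\langle e\rangle$ with $\partial e=x$, and $\bar A=\Lambda_k(e)$ with zero differential, which is not a resolution of $k$ over $k$. Consequently your conclusion that the two derived fibers ``literally coincide'' is also wrong; they differ exactly by an exterior factor $\Lambda_k(e)$. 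The free-summand hypothesis on $x$ in $\ker(\phi)/\ker(\phi)^2$ does not make $x$ regular on anything, and no naive base change succeeds. The paper instead invokes a nontrivial result of Iyengar (based on Andr\'e's theory of special cycles) to obtain a dg algebra resolution $A$ and a splitting $A\otimes_Q k\cong W\otimes_k\Lambda_k(e)$ of dg $k$-algebras, and then identifies $W$ with $R\lotimes_{Q/(x)} k$ via the derived base change $R\lotimes_{Q/(x)}k\simeq (R\lotimes_Qk)\lotimes_{Q/(x)\lotimes_Qk}k$; the comparison with $R\lotimes_Qk$ then reduces to part \cref{nzd_p2}.
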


\begin{proof}
For part \cref{nzd_p1}, let $A\xra{\simeq} R$ be a dg algebra resolution of $R$ over $Q$. The assumptions on $x$ and $y$ imply that $A\otimes_Q Q/(x)$ is a dg algebra resolution of $R/(y)$ over $Q/(x)$. In particular there are quasi-isomorphisms
\[
R\lotimes_Q k \simeq A\otimes_Q k\cong (A\otimes_Q Q/(x))\otimes_{Q/(x)} k \simeq R/(y)\lotimes_{Q/(x)} k
\]
of dg $k$-algebras, and the claim follows.

For part \cref{nzd_p2}, if $A$ is is a dg algebra resolution of $R$ over $Q$, as above, then there is an element $\tilde{y}\in \fm_QA_0$ mapping to $y\in R$. Taking an exterior variable $e$ of degree $1$, and setting $\partial(e)=\tilde{y}$, the extension $A\langle e\rangle$ is then a dg algebra resolution of $R/(y)$ over $Q$; see \cite[6.1]{Avramov:1998}. We see that 
\[
R/(y)\lotimes_Q k \simeq A\langle e\rangle \otimes_Q k\cong (A\otimes_Q k) \otimes_k \Lambda_k(e) \simeq (R \lotimes_Q k) \otimes_k \Lambda_k(e)\,,
\]
where $\Lambda_k(e)$ is the exterior algebra over $k$ on the degree $1$ variable $e$. Hence it remains to note that the tensor product of dg $k$-algebras is formal if and only if both of its factors are formal, and hence Koszul if and only if both of its factors are Koszul; see \cite[Theorem~2]{Froeberg:1999} for the latter. 

For part \cref{nzd_p3} we invoke \cite[Proposition~2.1]{Iyengar:2001} to obtain a dg algebra resolution $A$ of $R$ over $Q$ and an isomorphism of dg $k$-algebras $A\otimes_Q k \cong W \otimes_k \Lambda_k(e)$, where $W$ is a dg subalgebra of $A\otimes_Q k$ and $\Lambda_k(e)$ is the exterior algebra on a generator of degree $1$ (this result is based upon Andr\'e's theory of special cycles \cite{Andre:1982}). Moreover, the proof in \cite{Iyengar:2001} identifies the inclusion $\Lambda_k(e)\to A\otimes_Q k $ with the natural map $Q/(x)\lotimes_Qk \to R\lotimes_Qk$. It follows that 
\[
R\lotimes_{Q/(x)}k\simeq (R\lotimes_{Q}k)\lotimes_{ Q/(x)\lotimes_Qk}k \simeq (W \otimes_k \Lambda_k(e))\otimes_{\Lambda_k(e)} k \cong W\,.
\]
As in part \cref{nzd_p2} we may deduce that $R\lotimes_{Q/(x)}k$ is Koszul dg $k$-algebra if and only if $R\lotimes_{Q}k\simeq(R\lotimes_{Q/(x)}k)\otimes_k \Lambda_k(e) $ is as well.
\end{proof}

\begin{remark} \label{r_other_defs}
Many other Koszul-like properties have appeared in the literature. Within local commutative algebra, Herzog, Reiner, and Welker introduced a notion of Koszul local ring in \cite{Herzog/Reiner/Welker:1998}, and the same condition is investigated in \cite{Herzog/Iyengar:2005}. The local ring $k \llbracket x,y\rrbracket/(x^2-y^3)$ is Koszul in the sense of these references, but it is not a Koszul $k$-algebra according to \cref{koszul-algebra}, since it does not admit a quadratic presentation. However, the same ring $k\llbracket x,y\rrbracket/(x^2-y^3)$ is Koszul as a $k\llbracket y\rrbracket$-algebra by \cref{e:koszul:flat}, and it is Koszul as a $k\llbracket x,y\rrbracket$-algebra by \cref{e:koszul:ci}---in other words, it is Cohen Koszul.

Myers studied a Koszulity condition in \cite{Myers:2021} that is closely related to ours. That work begins with a standard graded $k$-algebra $R$, and its Koszul homology $\H(K^R)$ is said to be \emph{strand Koszul} if it is Koszul with respect to the induced weight grading by strands: $\H(K^R)_{(w)}=\bigoplus_{i+j=w}\H_i(K^R)_j$ (the total of the homological and internal gradings). According to \cite[Theorem~B]{Myers:2021} the Koszul complex $K^R$ is automatically \emph{quasi-formal}; this is a weakening of formality defined in terms of the degeneration of a certain Eilenberg--Moore spectral on its second page, see \cite[2.3]{Myers:2021}. In contrast, $R$ is Cohen Koszul if $K^R$ is formal and $\H(K^R)$ is Koszul with respect to \emph{some} weight grading. In the next section we see that there are many natural examples for which $\H(K^R)$ is Koszul with respect to a different grading than the strand grading. 

The authors of \cite{Croll/etal:2020} have also investigated how the Koszul condition on a local ring affects the algebra structure of the Koszul homology $\H(K^R)$. While this is connected to the present work, we note that there are many examples of local $k$-algebras that are Cohen Koszul but not Koszul as $k$-algebras.
\end{remark}

\begin{remark}
We end this section with remarks on the generality of \cref{koszul-ring-map}.

We have chosen to focus on the setting of finite $Q$-algebras because this is necessary to meaningfully talk about transferring homological information from $Q$ to $R$. However, the notion of Koszul homomorphism can be extended fruitfully to all local homomorphisms, with some additional technicalities. In particular, to accommodate non-finite algebras, \cref{koszul-algebra} should be adapted to require that the completion of $K$ at its augmentation ideal is isomorphic to $\prod_{w\geqslant 0} K_{(w)}$, and that $k$ admits a linear resolution over the corresponding graded ring $\bigoplus_{w\geqslant 0} K_{(w)}$. 

For $Q$ non-local, one can say a $Q$-algebra $R$ is \emph{Koszul at a prime} $\fp \in \operatorname{Spec}(Q)$ if $\kappa(\fp) \lotimes_Q R$ is a Koszul dg algebra over $\kappa(\fp)=Q_\fp/\fp Q_\fp$. From this perspective it is natural to replace $R$ with a sheaf of algebras on some scheme; examples related to this have appeared in the literature, such as the sheaf of Clifford algebras constructed by Buchweitz in \cite[Appendix]{Buchweitz/Eisenbud/Herzog:1985}. 

In this work we focus on applications to commutative algebra. The natural generalization to non-commutative algebras is interesting as well, using exactly the same definitions.
\end{remark}

\section{Examples of Koszul homomorphisms} \label{sec:examples-koszul}

This section contains examples (and counterexamples) demonstrating the ubiquity of the Koszul condition. The first class of examples generalizes the class of Koszul algebras over a field in a straightforward manner. 

\begin{example}[Flat homomorphisms with Koszul fiber] \label{e:koszul:flat}
A flat finite local homomorphism $\phi \colon Q \to R$ is Koszul if and only if its fiber $R \otimes_Q k$ is a Koszul $k$-algebra. Such examples are readily constructed by deforming presentations of known Koszul algebras. For example, the $k$-algebra $k[x]/(x^2)$ is Koszul, and so the homomorphism
\[
Q\to Q[x]/(x^2-ax-b)
\]
is Koszul for any $a,b\in \fm_Q$.
\end{example}

We will see that the non-flat case is significantly more interesting, and crucially \emph{there is no need for the map $\phi \colon Q \to R$ to be quadratic in any sense}, as the following examples demonstrate. Nonetheless, later we return to the idea that Koszul homomorphisms look like deformations of classical Koszul presentations; cf.\@ \cref{koszul-ai-presentation}.

\begin{example}[Complete intersection homomorphisms]\label{e:koszul:ci}
Let $\phi\colon Q\to R$ be a surjective, local, complete intersection homomorphism of codimension $c$. That is, $\ker(\phi)$ is generated by a $Q$-regular sequence $\bm{f}=f_1,\ldots, f_c$. In this case, the Koszul complex $A=\Kos^Q(\bm{f})$ is a dg algebra resolution of $R$ over $Q$. Then
\[
R \lotimes_Q k = A\otimes_Q k = \Lambda_k(\susp A_1\otimes_Q k)
\]
is the exterior algebra on a $k$-space of rank $c$ in homological degree one, with zero differential. Thus the derived fiber of $\phi$ is clearly formal, and it is well-known to be a Koszul $k$-algebra with its weight homological gradings coinciding; cf.\@ \cite[Examples~2.2(2)]{Priddy:1970}. In particular, a local complete intersection ring is Cohen Koszul. 
\end{example}

\begin{remark}
\label{r_different_grading}
For a Cohen Koszul ring $R$, the homological and weight grading on $\H(K^R)$ coincide if and only if $R$ is complete intersection. Indeed, the reverse implication was indicated in \cref{e:koszul:ci}, and the forward implication follows from a Theorem of Wiebe~\cite{Wiebe:1969}; see also \cite[Theorem~2.3.15]{Bruns/Herzog:1998}.
\end{remark}

\begin{example}[Trivial extension algebras]
\label{c:trivial_extension}
Given a graded ring $B$ and a graded $B$-module $U$, let $B\ltimes U$ denote the trivial extension of $B$ by $U$. This is the graded module $B\oplus U$ with multiplication 
\[
(b,u)\cdot (b',u')=(bb',bu'+b'u)\,,
\] 
and zero differential. The main case of interest is that $B$ is augmented to $k$, and $U$ is a graded $k$-space thought of as a trivial $B$-module. If $B$ is also a $k$-algebra, then $B$ is a Koszul if and only if $B\ltimes U$ is by \cite{Cheng:2017}.

In particular, for any $k$-space $U$ the local $k$-algebra $k\ltimes U$ is Koszul. It is also Cohen Koszul, according to \cref{e:koszul:golod}.
\end{example}

For a surjective local map $\phi\colon Q\to R$ and $R$-module $M$ there is the following coefficientwise inequality of Poincar\'e series:
\begin{equation}\label{golod_bound}
\ps^R_M(t)\preccurlyeq \dfrac{\ps^Q_M(t)}{1-t(\ps^Q_R(t)-1)}\,.
\end{equation}
This fact is due to Serre; see, for example, \cite[Proposition~3.3.2]{Avramov:1998}.

\begin{example}[Golod homomorphisms] \label{e:koszul:golod}
Let $\phi \colon Q \to R$ be a surjective, local, Golod homomorphism. That is, the Serre bound \cref{golod_bound} is an equality for the residue field $M=k$.

Avramov proved that a surjective, local homomorphism is Golod if and only if there is a quasi-isomorphism of dg algebras
\[
R \lotimes_Q k\simeq k \ltimes U\,,
\]
where $U$ is a positively graded vector space over $k$. This follows by applying \cite[Theorem~2.3]{Avramov:1986} to a (possibily non-minimal) dg $k$-algebra model for $R \lotimes_Q k$. The trivial extension algebra $k\ltimes U$ is Koszul by \cite[Proposition~3.4.9]{Loday/Vallette:2012}, see also \cref{c:trivial_extension}, and hence $\phi$ is Koszul. 
\end{example}

The examples above provide many instances of Cohen Koszul $k$-algebras appearing in local commutative algebra, and \emph{some} of these examples are Koszul in the classical sense: for example a local complete intersection $k$-algebra is Koszul if and only if it is quadratic; see \cite[3.1]{Froeberg:1999}. We give a small example of a local $k$-algebra that is neither Koszul nor Cohen Koszul.

\begin{example}
\label{e:koszul:nonexample'}
Suppose $R=k\llbracket a,b,c\rrbracket/(a^2,bc,ac+b^2)$. A computation shows $\Lambda_k(e_1,e_2,e_3)/(e_1e_2e_3)$ is an algebra retract of $\H(K^R)$; one could, for example, use \texttt{Macaulay2}~\cite{M2} for this calculation. In particular, $\H(K^R)$ has a relation of weight $3$ in any weight grading, and so $\H(K^R)$ cannot be a Koszul $k$-algebra. Thus, $R$ is not Cohen Koszul. Moreover, $R$ is the completion of a quadratic algebra that is not Koszul in the classical sense. One can see this by computing the third differential in the minimal free resolution of $k$ over $R$; alternatively, see \cite{Backelin/Froeberg:1985b}.

In \cref{p_codepth_three}, we see that this is part of an exceptional class of non-Cohen Koszul local rings among rings having embedding dimension at most three. 
\end{example}

\begin{example}[Short Gorenstein local rings]
\label{c:short_gorenstein} 
A local ring $R$ with maximal ideal $\fm_R$ is called \emph{short Gorenstein} if it is Gorenstein and $\fm_R^3=0$. Equivalently, these are the local rings having Hilbert series $\H_R(t)=1+nt+t^2$ for some $n$. This is an important class of local rings that occurs frequently in what follows. If $R$ is also a $k$-algebra, then $R$ is Koszul by \cite{Froeberg:1982} or \cite{Schenzel:1980} (this follows as well from the slightly earlier computations in \cite{Avramov/Levin:1978}). Short Gorenstein rings are also Cohen Koszul by \cref{e:koszul:punctured-golod}.
\end{example}

\subsection{Rings of small codepth} \label{e:koszul:small-codepth}

Recall that for a local ring $R$ with maximal ideal $\fm_R$ and residue field $k$, its codepth is 
\[
\codepth(R) \coloneqq \rank_k (\fm_R/\fm_R^2)-\operatorname{depth}(R)\,.
\]
This value is a measure of the singularity of $R$ in the sense that $\codepth(R)=0$ if and only if $R$ is regular. The next result illustrates that a local ring of small codepth is almost always Cohen Koszul. First we remind the reader of the structure theorem on Koszul homology for rings having codepth three. 

\begin{chunk}
\label{c:codepth3}
Assume $R$ has codepth three and fix a minimal Cohen presentation $\phi\colon Q\to \widehat{R}$. The minimal $Q$-free resolution $A$ of $\widehat{R}$ supports a dg algebra structure; see, for example, \cite{Buchsbaum/Eisenbud:1977}. Hence, $\widehat{R}\lotimes_Q k$ is formal and the algebra structure of its homology $T=\Tor{Q}{\widehat{R}}{k} =\H(K^R)$ has been classified as follows.

Fix bases $\{e_1, \ldots, e_\ell\}$, $\{f_1, \ldots, f_m\}$ and $\{g_1, \ldots, g_n\}$ for $T_1$, $T_2$ and $T_3$, respectively. By \cite{Avramov/Kustin/Miller:1988}, there are non-negative integer parameters $p,q,r$, satisfying 
\[
p \leqslant \ell-1, \quad q \leqslant m-p, \quad r \leqslant \min\{\ell,m\}\,,
\]
such that $T$ is one of the graded-commutative algebras determined below, where products between the basis elements not listed are zero: 
\begin{description}[labelindent=1em]
\item[\textbf{CI}] $e_1e_2=f_3,$ $e_1e_3=f_2$, $e_2e_3=f_1$, $e_if_i=g_1$ for $i=1,2,3.$ 
\item[\textbf{TE}] $e_1e_2=f_3,$ $e_1e_3=f_2$, $e_2e_3=f_1$.
\item[\textbf{B}] $e_1e_2=f_3,$ $e_1f_1=g_1$, $e_2f_2=g_1$.
\item[\textbf{G}$(r)$] $e_if_i=g_1$ for $i=1,\ldots, r$ and $r\ge 2$
\item[\textbf{H}$(p,q)$] $e_{p+1}e_i=f_i$ for $i=1,\ldots,p$, and $e_{p+1}f_{p+i}=g_i$ for $i=1,\ldots, q$.
\end{description}
In each case, let $T'$ denote the corresponding subalgebra on the basis elements appearing in the multiplication table above. If $U$ is the $k$-space spanned by the basis of elements of $T$ not recorded in the same multiplication table, then note that there is an isomorphism of graded $k$-algebras 
\begin{equation}
\label{e_T_trivial_extension}
T\cong T'\ltimes U\,.
\end{equation}
Finally, as a matter of terminology, we say a local ring $R$ belongs to one of these classes if $T=\Tor{Q}{\widehat{R}}{k}$ has the corresponding algebra structure.
\end{chunk}

\begin{theorem}\label{p_codepth_three}
A local ring of codepth two or less is Cohen Koszul. A local ring of codepth three is Cohen Koszul if and only if it belongs to $\mathbf{CI}$, $\mathbf{B}$, $\mathbf{G}(r)$ or $\mathbf{H}(p,q)$. 
\end{theorem}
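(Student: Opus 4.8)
The plan is to reduce the theorem to a question about Koszulity of the Tor algebra, and then to run through the finite list of algebra structures permitted by the classification recalled in \cref{c:codepth3}. A local ring of codepth at most three has a minimal $Q$-free resolution carrying a dg algebra structure (classical; for codepth three this is \cite{Buchsbaum/Eisenbud:1977}, as recalled in \cref{c:codepth3}), so by \cref{r_minres} the derived fiber of a minimal Cohen presentation is formal. Hence $R$ is Cohen Koszul if and only if $T=\Tor{Q}{\widehat{R}}{k}=\H(K^R)$ is a Koszul $k$-algebra for \emph{some} weight grading. Writing $T\cong T'\ltimes U$ as in \eqref{e_T_trivial_extension} and invoking \cref{c:trivial_extension}, this holds if and only if the subalgebra $T'$ is Koszul, so the theorem comes down to deciding Koszulity of $T'$ in each class. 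For codepth at most two the classification leaves only $T'=k$ (the regular and codepth $\leqslant 2$ Golod cases) and $T'=\Lambda_k(k^j)$ with $j\leqslant 2$ (the hypersurface and codepth two complete intersection cases), all of which are Koszul; this settles the first sentence.

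For codepth three I would take the five classes in turn. In class $\mathbf{CI}$ one has $T'=\Lambda_k(k^3)$, which is Koszul (this also follows from \cref{e:koszul:ci}). In class $\mathbf{H}(p,q)$, reading off the multiplication table identifies $T'$ with the tensor product $\Lambda_k(k)\otimes_k\bigl(k\ltimes k^{p+q}\bigr)$ of an exterior algebra on one generator with a trivial extension of $k$; both factors are Koszul, and the tensor product of Koszul algebras is Koszul by \cite[Theorem~2]{Froeberg:1999}. In class $\mathbf{G}(r)$ the relations $e_if_i=g_1$ present $T'$ as a quadratic algebra whose quadratic dual $(T')^{!}$ is a free algebra modulo a single nondegenerate quadratic form, that is, a noncommutative hypersurface; this is Koszul (for instance it has a PBW basis), hence so is $T'$. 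For $r$ odd these $T'$ are moreover the Tor algebras of generic codepth three Gorenstein rings and so are also covered by \cref{e:koszul:buchsbaum-eisenbud}. The class $\mathbf{B}$ is the one requiring genuine computation: I would write down the quadratic dual of $T'$ on the four generators dual to $e_1,e_2,f_1,f_2$---it has one relation forcing the images of $e_1,e_2$ to commute and one further relation analogous to the hyperbolic one above---and then certify its Koszulity directly, for instance by producing a quadratic Gr\"obner basis or by computing enough of the minimal resolution of $k$.

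It remains to show that class $\mathbf{TE}$ fails, that is, that $T'\cong\Lambda_k(k^3)/(\operatorname{socle})$, with its genuine cubic relation, is never Koszul. The crux is that this cannot be dissolved by regrading. For any weight grading in which $T'$ is generated in weight one, the ideal $\fm_{T'}^2$ is weight-homogeneous and concentrated in weights $\geqslant 2$; this forces $T'_{(1)}$ to be a complement of $\fm_{T'}^2$ in $\fm_{T'}$, hence to map isomorphically onto $\fm_{T'}/\fm_{T'}^2$, which pins the weight grading to the evident one, with Hilbert series $1+3t+3t^2$. With respect to it $T'$ is not quadratic---its quadratic closure is all of $\Lambda_k(k^3)$---equivalently $1/(1-3t+3t^2)$ has a negative coefficient, so it cannot be the Poincar\'e series of a module with linear resolution; either way $T'$ is not Koszul by \cite{Beilinson/Ginzburg/Soergel:1996}. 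Since $T'\hookrightarrow T$ is a retract of graded $k$-algebras, $T$ is not Koszul, so $R$ is not Cohen Koszul; this is the general form of \cref{e:koszul:nonexample'}.

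The step I expect to be the main obstacle is the analysis of the quadratic duals in classes $\mathbf{B}$ and $\mathbf{G}(r)$: one must identify these algebras precisely and then \emph{certify} their Koszulity, not merely check that they are quadratic. On the negative side, the delicate point is the rigidity argument---the weight-homogeneity of $\fm_{T'}^2$---which is what rules out \emph{every} weight grading for class $\mathbf{TE}$ rather than just the obvious one.
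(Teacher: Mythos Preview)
Your proposal is correct and follows the same overall architecture as the paper: formality from the dg algebra structure on the minimal resolution (\cref{r_minres}), reduction to $T'$ via the trivial extension splitting $T\cong T'\ltimes U$, and a case analysis of the five codepth-three classes. Your treatments of $\mathbf{CI}$, $\mathbf{H}(p,q)$, and $\mathbf{TE}$ match the paper's almost exactly (the paper's $\mathbf{TE}$ argument is the same rigidity observation, stated more tersely).

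The one substantive difference is in $\mathbf{G}(r)$ and $\mathbf{B}$, where you propose passing to quadratic duals. The paper avoids this. For $\mathbf{G}(r)$ it simply observes that with the weight grading $T'_{(1)}=\bigoplus k e_i \oplus \bigoplus k f_i$, $T'_{(2)}=k g_1$, the algebra $T'$ is short Gorenstein, hence Koszul by \cref{c:short_gorenstein}; no dual computation is needed. For $\mathbf{B}$ the paper writes down an explicit presentation
\[
T'\cong \Lambda_k(e_1,e_2,f_1,f_2)/(e_1f_2,\,e_2f_1,\,f_1f_2,\,e_1f_1-e_2f_2)
\]
with respect to the weight grading placing $e_1,e_2,f_1,f_2$ in weight one, and then notes that this defining ideal has a quadratic Gr\"obner basis, invoking \cite{McCullough/Mere:2022} to conclude Koszulity. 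This is more direct than computing the dual and certifying its Koszulity separately, and it sidesteps exactly the step you flagged as the main obstacle.
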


\begin{proof}
Let $R$ be a local ring of codepth $c$ with residue field $k$. Fix a minimal Cohen presentation $\phi\colon Q\to \widehat{R}$ and set $T=\Tor Q{\widehat{R}} k$. 

If $c\leqslant 2$, then $R$ must be complete intersection or Golod; that is, $\phi$ is a complete intersection or Golod homomorphism. This follows from the Hilbert--Burch theorem (see \cite{Burch:1968}, as well \cite[Theorem~1.4.17]{Bruns/Herzog:1998}) combined with a result of \cite[Theorem~2.3]{Levin:1976}; see also \cite[Proposition~5.3.4]{Avramov:1999}. Therefore by \cref{e:koszul:ci,e:koszul:golod} $R$ is Cohen Koszul in either case.

Now assume $c=3$. The simplest case is when $R$ belongs to \textbf{CI}, since in this case $R$ is complete intersection and so $R$ is Cohen Koszul; cf.\@ \cref{e:koszul:ci}.
 
For the remainder of the proof we adopt the notation from \cref{c:codepth3} and analyze the graded algebra structure of $T$. By \cref{c:trivial_extension,e_T_trivial_extension}, $T$ is Koszul if and only if $T'$ is, and so we replace $T$ by $T'$ in what follows.

If $T$ is \textbf{H}$(p,q)$, then it is the tensor product of a trivial extension algebra on $e_1,\dots,e_p,f_{p+1},\dots,f_{p+q}$ and the exterior algebra on $e_{p+1}$, where each of these have weight 1. Hence, $T$ is a tensor product of Koszul $k$-algebras and so it is Koszul. 

If $T$ is \textbf{G}$(r)$, we give $\{e_i\}$ and $\{f_i\}$ weight one and $g_1$ weight two. Then $T$ is a short Gorenstein $k$-algebra and hence Koszul; see \cref{c:short_gorenstein}.

If $T$ is \textbf{B}, then we give $T$ weight grading 
\[
T_{(w)}=\begin{cases}
k & w=0\\
k e_1 \oplus k e_2 \oplus k f_1 \oplus k f_2& w=1 \\
kf_3\oplus kg_1 & w=2 \\
0 & w\geqslant 3\,.
\end{cases}
\]
As an algebra $T$ is the quotient of the exterior algebra 
\[
T \cong \Lambda_k(e_1,e_2,f_1,f_2)/(e_1f_2,e_2f_1,f_1f_2,e_1f_1-e_2f_2)\,.
\]
The graded $k$-algebra $T$ is Koszul since its defining ideal has a quadratic Gr{\" o}bner basis, and is therefore Koszul by \cite{McCullough/Mere:2022}. 

Finally, if $T$ is type \textbf{TE}, then $T$ is not quadratic with respect to any weight grading. Indeed, the products in \cref{c:codepth3} force $e_1,e_2,e_3$ to all have weight one, as well as the minimal relation $e_1e_2e_3=0$. Hence, $T$ is not Koszul.
\end{proof}

\begin{remark}
\label{r_codepth_three}
\cref{p_codepth_three} deals with the `absolute' case. That is to say, when a local ring is Cohen Koszul. However given a surjective map of local rings $\phi\colon Q\to R$ one has that this is always Koszul for $\pdim_Q(R)\leqslant 2$. Indeed, in this case $\phi$ is either a complete intersection homomorphism, or a Golod homomorphism. For the case $\pdim_Q(R)=3$, the structure theorem on $T=\Tor QRk$ discussed in \cref{c:codepth3} can be applied assuming that $\ker(\phi)$ is a perfect ideal. In this case $\phi$ is Koszul except in the case that $T$ belongs to \textbf{TE}. 
\end{remark}

A surjective local homomorphism $\phi\colon Q\to R$ of finite projective dimension is \emph{Gorenstein of projective dimension $d$} if 
\begin{equation}\label{eq_gor_def}
\mathrm{Ext}_Q^i(R,Q)=\begin{cases} R &i=d \\
0 & i\neq d\,.\end{cases}
\end{equation}
For example, Gorenstein rings of codimension $d$ are exactly those whose minimal Cohen presentations are Gorenstein of projective dimension $d$. If $d=3$ then $\Tor QRk$ belongs to {\bf G}$(r)$ and the dg algebra structure on the minimal resolution of $R$ over $Q$ can be described explicitly; one can hence verify directly, as is done below, that such maps are Koszul. 

\begin{example}[Gorenstein homomorphisms of projective dimension $3$] 
\label{e:koszul:buchsbaum-eisenbud} 
\hfill Assume \\
that $\phi$ is Gorenstein of projective dimension $3$. Buchsbaum and Eisenbud constructed the minimal free resolution of $R$ over $Q$ in \cite[Theorem~2.1 \& 4.1]{Buchsbaum/Eisenbud:1977}:
\[
A = 0\to Q \to Q^r\xra{\psi} Q^r\to Q\to 0
\]
where $r \geqslant 3$ is odd and the first and third differential of $A$ depend on Pfaffians of submatrices of the alternating matrix $\psi$. Furthermore $A$ is a graded-commutative dg algebra with the following multiplication: We fix bases $\{e_i\}_{i=1}^r$, $\{f_i\}_{i=1}^r,$ and $\{g\}$ for $A_1,$ $A_2,$ and $A_3$, respectively. The multiplication is determined by
\[
e_i e_j \coloneqq \sum_{\ell=1}^r (\pm 1) \operatorname{pf}(\psi_{ij\ell}) f_\ell \quad\text{for } i<j \,,\quad e_i f_j \coloneqq \delta_{ij} g \quad\text{and}\quad f_i f_j = 0
\]
where $\psi_{ij\ell}$ is the submatrix of $\psi$ obtained by deleting the $i$th, $j$th and $\ell$th row and column, and $\delta_{ij}$ is the Kronecker delta function. The exact description is not important for the sequel, see \cite[Example~2.1.3]{Avramov:1998} for detail. 

When $r=3$, it follows that $A$ is a Koszul complex on three elements and so $\phi$ is a surjective complete intersection map, hence $\phi$ is Koszul by \cref{e:koszul:ci}.

When $r \geqslant 5$, we have that $\operatorname{pf}(\psi_{ij\ell}) \in \fm_Q$ for any $i$, $j$ and $\ell$. Hence the only non-zero products in the graded algebra $A\otimes_Q k$ are 
\[
e_if_i=f_ie_i=g \quad \text{for } 1 \leqslant i \leqslant r\,.
\]
Giving $A\otimes_Q k$ the weight grading 
\[
(A\otimes_Q k)_{(w)}=\begin{cases}
A_0\otimes_Qk & w=0\\
(A_1\otimes_Qk)\oplus (A_2 \otimes_Qk)& w=1\\
A_3\otimes_Qk & w=2\\
0 & \text{else}\,,
\end{cases}
\]
it belongs to \textbf{G}($r$) in \cref{c:codepth3}. The fact that $A\otimes_Q k$ is a Koszul $k$-algebra was established in the proof of \cref{p_codepth_three}. Thus $\phi$ is a Koszul homomorphism.
\end{example}

\subsection{Almost Golod Gorenstein rings}
\label{e:minl-non-golod}

We discuss here a large class of local Gorenstein rings displaying interesting homological behavior, that has been studied before in \cite[Section 6]{Rossi/Sega:2014}.

\begin{definition}
We say that an artinian local ring $R$ is \emph{almost Golod} if the socle quotient $R/\soc(R)$ is Golod. A general local ring is \emph{almost Golod} if it is Cohen--Macaulay and $R/(\bm{x})$ is an almost Golod artinian ring, where $\bm{x}$ is a maximal regular sequence that is part of a minimal generating set for $\fm_R$. 
\end{definition}

\begin{example}[Almost Golod Gorenstein rings] \label{e:koszul:punctured-golod}
Let $R$ be an almost Golod local ring that is also Gorenstein of codepth $d$. 

Fix a minimal Cohen presentation $\phi \colon Q \to \widehat{R}$ and set $T= \Tor Q{\widehat{R}}k$. Since $Q$ is regular and $R$ is Gorenstein, $T$ is a Poincar\'e duality algebra by \cite{Avramov/Golod:1971}. That is to say, for each $0 \leqslant i \leqslant d$ the multiplication maps 
\[
T_i\times T_{d-i}\to T_d\cong k
\]
are perfect pairings. Furthermore, by \cite[Theorem~1]{Avramov/Levin:1978} the quotient $T/T_d$ is a subalgebra of the trivial extension algebra $\Tor Q{R/\soc(R)}k$, and hence is itself a trivial extension algebra. It follows that $T$ is a short Gorenstein $k$-algebra. Moreover, prescribing $T$ with the following weight grading 
\[
T_{(0)}=T_0,\quad T_{(1)} = {\textstyle\bigoplus_{i=1}^{d-1}} T_i,\ \text{ and }\ \ T_{(2)}=T_d 
\]
makes $T$ a Koszul $k$-algebra, with the multiplication of $T$ being equivalent to a perfect pairing on $T_{(1)}$; see the proof of \cref{p_codepth_three}. 

We prove that these rings are Cohen Koszul under the assumption that $R$ contains a field of characteristic zero and $d$ is odd, by giving a characterization analogous to Avramov's characterization of Golod rings \cite{Avramov:1986}. We do not know whether the assumption on the characteristic or on $d$ is necessary (but see \cref{rem_berglund}).

\begin{theorem}
\label{t_almost_golod_gor_characterisation}
Let $R$ be a local with a minimal Cohen presentation $Q\to\widehat{R}$. If there is a quasi-isomorphism of dg $k$-algebras $\widehat{R}\lotimes_Qk\simeq T$, where $T$ is a short Gorenstein graded $k$-algebra, then $R$ is almost Golod Gorenstein. Assuming that $R$ contains a field of characteristic zero, and that $\codepth(R)$ is odd, the converse holds as well. In particular, almost Golod Gorenstein rings (of characterstic zero and odd codepth) are Cohen Koszul.
\end{theorem}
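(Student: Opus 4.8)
\emph{Plan and reductions.} The plan is to prove the two implications separately and to extract the Cohen Koszul clause from the forward one. In both directions I would first reduce to the artinian case: choosing a maximal $R$-regular sequence $\bm{x}$ that is part of a minimal generating set of $\fm_R$, passage to $R/(\bm{x})$ preserves the property of being almost Golod Gorenstein (by definition) and the parity of $\codepth(R)$, and by iterating \cref{nzd_p1} it preserves the quasi-isomorphism class of the derived fiber of a minimal Cohen presentation. So assume $R$ is artinian with minimal Cohen presentation $\phi\colon Q\to R$ and put $T=\Tor{Q}{R}{k}$; then $T$ is concentrated in homological degrees $0,\dots,c$, where $c=\codepth(R)=\dim Q$. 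Granting the converse implication, an almost Golod Gorenstein ring of characteristic zero and odd codepth satisfies $R\lotimes_Q k\simeq T$ with $T$ short Gorenstein; a short Gorenstein $k$-algebra is Koszul for the weight grading placing the residue field in weight $0$, its positive-degree part below the socle in weight $1$, and its socle in weight $2$ (cf.\ \cref{c:short_gorenstein} and the proof of \cref{p_codepth_three}), so $\phi$ is a Koszul homomorphism by \cref{koszul-ring-map}, and since $R$ is equicharacteristic its minimal Cohen presentation is essentially unique (\cref{r_characteristic_and_koszul_complex}), so $R$ is Cohen Koszul (\cref{d_Cohen_Koszul}).

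\emph{The backward implication.} Suppose $R\lotimes_Q k\simeq T$ with $T$ short Gorenstein. Then $T=\H(R\lotimes_Q k)$ is a graded Poincar\'e duality algebra, so $R$ is Gorenstein by \cite{Avramov/Golod:1971} and $\soc(R)\cong k$ as an $R$-module. By Avramov's criterion \cite[Theorem~2.3]{Avramov:1986} it suffices to exhibit a quasi-isomorphism $(R/\soc(R))\lotimes_Q k\simeq k\ltimes U$. Applying $-\lotimes_Q k$ to $0\to\soc(R)\to R\to R/\soc(R)\to 0$ and using $\soc(R)\cong k$ produces an exact triangle whose first term is the exterior algebra $k\lotimes_Q k=\Tor{Q}{k}{k}$ (with zero differential, as $Q$ is regular), whose second term is $R\lotimes_Q k$, and whose third is $(R/\soc(R))\lotimes_Q k$. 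Matlis duality identifies the first map with the dual of the algebra map $R\lotimes_Q k\to k\lotimes_Q k$ induced by $R\to k$; because $\ker(\phi)\subseteq\fm_Q^2$ this last map vanishes in positive homological degrees (one propagates this degree by degree using minimality of the $Q$-free resolution of $R$), so the first map of the triangle is an isomorphism in top degree $c$ and zero below it. Feeding in the formality hypothesis together with the short Gorenstein shape of $T$, one reads off from the triangle that $(R/\soc(R))\lotimes_Q k$ is formal with homology a trivial extension; hence $R/\soc(R)$ is Golod and $R$ is almost Golod Gorenstein.

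\emph{The forward implication --- the main obstacle.} Now assume $R$ is almost Golod Gorenstein of characteristic zero and odd codepth $c$. By \cref{e:koszul:punctured-golod} the algebra $T$ is already short Gorenstein, so the whole content is the \emph{formality} of $R\lotimes_Q k$; this is the hard step, and it is where the characteristic and parity hypotheses are spent. The plan is: since $R$ is Gorenstein, its minimal $Q$-free resolution $A$ carries Poincar\'e duality, and using the theory of \cref{cyclic_ai_algebra} we endow $A$ with a cyclic $\ai$-structure over $Q$; reducing modulo $\fm_Q$ yields a cyclic minimal $\ai$-structure $\{m_n\}$ on $T$ with $m_1=0$ and $m_2$ the product of $T$ (characteristic zero is used to build the cyclic structure, and the parity of $c$ governs the sign conventions on the cyclic pairing). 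Next, because $\ker(\phi)\subseteq\fm_Q^2$, the algebra map $f\colon T\to\mathcal{T}:=\Tor{Q}{R/\soc(R)}{k}$ induced by $R\to R/\soc(R)$ has kernel exactly the socle $T_c$ (as in \cite[Theorem~1]{Avramov/Levin:1978}), and $f$ underlies an $\ai$-morphism obtained by transferring structures along a dg algebra lift (\cref{sec:transfer-ai}); since $R/\soc(R)$ is Golod, Avramov's criterion lets us take the $\ai$-structure on $\mathcal T=k\ltimes U$ to have vanishing higher products. A downward induction on $n$, using these facts together with the degree bounds (both $T$ and $\mathcal T$ vanish above degree $c$) and the vanishing of products in the trivial extension $\mathcal T$, shows that $m_n$ restricted to $V^{\otimes n}$ lands in $T_c$ for every $n\ge 3$, where $V\subseteq T$ denotes the positive-degree part below the socle; finally, cyclicity of $\{m_n\}$ together with strict unitality forces the $T_c$-component of each $m_n$ with $n\ge 3$ to vanish, so all higher products are zero and $R\lotimes_Q k$ is formal. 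The expected main difficulty is exactly this formality argument --- installing a genuinely cyclic $\ai$-structure over the non-field base $Q$ with the right compatibilities and carrying out the cyclicity-plus-unitality collapse --- and it is for this that we impose characteristic zero and odd $c$; we do not expect these restrictions to be essential (cf.\ \cref{rem_berglund}).
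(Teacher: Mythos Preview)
Your forward implication is essentially the paper's approach, carried out in \cref{p_cyclic_ai} and \cref{t_golod_gorenstein_koszul}: install a cyclic $\ai$-structure on the minimal $Q$-free resolution $A$ (this is where characteristic zero and odd codepth are spent), compare via an $\ai$-morphism with the minimal resolution of the Golod quotient $R/\soc(R)$ (which carries an $\ai$-structure with all reduced operations landing in $\fm_Q$), and use cyclicity together with strict unitality to kill the top-degree output of each $m_n$ for $n\geqslant 3$. The paper packages the comparison step as \cref{l_minimal_A_infty_map} and argues by \emph{upward} induction on $n$ via the Stasheff morphism identities; your ``downward induction'' is presumably a slip, but the strategy is right.

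Your backward implication has a genuine gap. The exact triangle
\[
k\lotimes_Q k \longrightarrow R\lotimes_Q k \longrightarrow (R/\soc(R))\lotimes_Q k
\]
lives in the derived category of complexes, and formality of the middle term---even combined with complete control of the first map on homology---does not let you ``read off'' that the third term is formal \emph{as a dg $k$-algebra}. Formality is not stable under taking cofibers: even if you identify $R\lotimes_Q k$ with $T$ carrying only $m_2$, the transferred $\ai$-structure on $\Tor{Q}{R/\soc(R)}{k}$ may well have nonzero higher operations, and nothing in your argument rules this out. To invoke Avramov's criterion you would need to exhibit an actual dg-algebra quasi-isomorphism $(R/\soc(R))\lotimes_Q k\simeq k\ltimes U$, not merely compute homology. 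The paper sidesteps this entirely: the hypothesis already makes $\phi$ a Koszul homomorphism (short Gorenstein algebras are classically Koszul), so \cref{p_cohen_koszul_poincare} yields an explicit closed form for $\ps^R_k(t)$, and this matches the Poincar\'e series that characterises almost Golod Gorenstein rings in \cite[Proposition~6.2]{Rossi/Sega:2014}. Your deduction that $R$ is Gorenstein from the Poincar\'e duality of $T$ via \cite{Avramov/Golod:1971} is correct and is used implicitly by the paper as well.
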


\begin{proof}
If $R$ is almost Golod Gorenstein, we have already seen that $T=\Tor Q{\widehat{R}}k$ is a short Gorenstein algebra, and in particular Koszul. The proof that $\widehat{R}\lotimes_Q k$ is formal under the stated assumptions will be given in \cref{p_cyclic_ai} and \cref{t_golod_gorenstein_koszul}.

Conversely assume that $\widehat{R}\lotimes_Qk$ is quasi-isomorphic to a short Gorenstein algebra $T$, and write $e$ for the embedding dimension of $R$. By \cref{p_cohen_koszul_poincare}, 
\[
{\ps^{{R}}_k(t)} = \frac{(1+t)^e }{1-(\H_T(t) - 1 - t^{e})t+t^{e+2}}\,,
\]
and hence as a consequence of \cite[Proposition~6.2]{Rossi/Sega:2014}, $R$ is almost Golod. 
\end{proof}
\end{example}

\begin{remark}
\label{r_list_of_examples_almost_golod_gor}
The prototypical example of an almost Golod Gorenstein ring is a short Gorenstein local ring $R$. In this case $R/\soc(R)=R/\fm_R^2$ is Golod by \cite{Golod:1978}.

Among the complete intersection local rings, the almost Golod Gorenstein rings are exactly those having codimension two or less; see \cref{t_almost_golod_gor_characterisation}. If $R$ is a Gorenstein local ring of codimension $3$ that is not complete intersection, then $R$ is almost Golod Gorenstein by \cref{e:koszul:buchsbaum-eisenbud} and \cref{t_almost_golod_gor_characterisation}. 

By \cite[Proposition~6.3]{Rossi/Sega:2014} every Gorenstein compressed local ring of socle degree at least $4$ is almost Golod Gorenstein. Moreover for fixed emdedding dimension and socle degree, the generic Gorenstein local $k$-algebra is compressed by \cite[Theorem~I]{Iarrobino:1984}. Hence the almost Golod Gorenstein condition is extremely common. 

Recall that if $Q$ is a standard graded polynomial algebra, a homogeneous quotient $R=Q/I$ is said to have an \emph{almost linear} resolution over $Q$ if the ideal $I$ is generated by forms of degree $e$, and for all $0<i<\pdim_Q(R)$ we have $\Tor[i]{Q}{R}{k}_j=0$ unless $j-i=e-1$; confer \cite{Eisenbud/Huneke/Ulrich:2006}. Graded Gorenstein rings with almost linear resolutions are always almost Golod Gorenstein. This will be justified later in the paper, in \cref{ex_amost_lin_a_infinity}, along with the fact that such algebras with $e \geqslant 3$ are also Cohen Koszul; this is done without the assumptions on characteristic and codepth in \cref{t_almost_golod_gor_characterisation}.
\end{remark}
 
\begin{remark}\label{r_highly_connected}
In rational homotopy theory, Golod rings correspond to spaces that are (rationally) homotopy equivalent to a wedge of spheres, while Gorenstein rings are analogous to manifolds, or more generally Poincar\'e duality spaces; see the looking glass \cite{Avramov/Halperin:1986} for more information.

To be more precise, if $M$ is a simply connected manifold and the punctured space $M\smallsetminus\{{\rm pt}\}$ is rationally homotopy equivalent to a wedge of spheres, then the cohomology ring $\H^*(M;\mathbb{Q})$ is an almost Golod Gorenstein ring. In \cite{Stasheff:1983}, Stasheff proved such spaces are formal, and therefore they are Koszul in the sense of Berglund \cite{Berglund:2014}. A well studied class of manifolds satisfying this property are the \emph{highly connected manifolds}, that is, those $M$ with $\H^i(M;\mathbb{Q})=0$ when $0<i<\lfloor \dim(M)/2\rfloor$.
\end{remark}

Since Gorenstein rings that are not regular or hypersurfaces are never Golod, the Serre bound \cref{golod_bound} must be strict for such rings. However, a tighter bound can be established for Gorenstein local rings, as we show now. The case of equality below is equivalent (when $d=0$) to the formula for $\ps^R_k(t)$ given in \cite[Proposition~6.2]{Rossi/Sega:2014}, and our proof is essentially equivalent to that of \emph{loc.\@ cit}.

\begin{proposition}
\label{p_almost_golod_poincare}
Let $R$ be a local ring having dimension $d$ and embedding dimension $e$, with residue field $k$ and Koszul complex $K^R$. If $R$ is Gorenstein but not regular or a hypersurface, then there is a coefficientwise inequality
\begin{equation*}
\frac{\ps^R_k(t)}{(1+t)^d-t^2 \ps^R_k(t)} \preccurlyeq \frac{(1+t)^{e-d}}{1 - t^2 (1+t)^{e-d} + t^{e-d+2}-\sum_{i=1}^{e-d-1} \rank_k \H_i(K^R) t^{i+1}}
\end{equation*}
and equality holds if and only if $R$ is almost Golod.
\end{proposition}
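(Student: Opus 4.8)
The plan is to reduce to the artinian case and then to a ``punctured'' version of the Golod inequality applied to the ring $S \coloneqq R/(\bm{x})$, where $\bm{x}$ is a maximal $R$-regular sequence forming part of a minimal generating set of $\fm_R$. First I would record that $\ps^R_k(t) = (1+t)^d\,\ps^S_k(t)$ and that $\H(K^R) \cong \H(K^S)\otimes_k \Lambda_k(\bm{x})$-free summands die; more precisely, since $\bm{x}$ is a regular sequence the Koszul homology satisfies $\rank_k\H_i(K^R) = \rank_k\H_i(K^S)$ for $i < e-d$, so both sides of the asserted inequality are unchanged (after the substitution) by replacing $R$ with $S$. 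Thus it suffices to treat the artinian Gorenstein case $d=0$, where the claim becomes
\[
\frac{\ps^R_k(t)}{1-t^2\ps^R_k(t)} \preccurlyeq \frac{(1+t)^e}{1 - t^2(1+t)^e + t^{e+2} - \sum_{i=1}^{e-1}\rank_k\H_i(K^R)\,t^{i+1}}\,,
\]
with equality iff $R/\soc(R)$ is Golod.

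The second step is to relate the data of $R$ to that of the socle quotient $\bar R \coloneqq R/\soc(R)$. Since $R$ is Gorenstein artinian, $\soc(R)\cong k$ sits in the top internal degree, and $K^{\bar R}$ differs from $K^R$ by killing the socle; I would use the standard comparison (as in Avramov--Levin \cite{Avramov/Levin:1978}) giving $\rank_k\H_i(K^{\bar R}) = \rank_k\H_i(K^R) + \rank_k\H_{i+1}(K^R)$ for the relevant range, together with the Poincaré-duality fact that $\rank_k\H_i(K^R) = \rank_k\H_{e-i}(K^R)$ from \cite{Avramov/Golod:1971} (applied to a minimal Cohen presentation). The point is that the generating function $\sum_i\rank_k\H_i(K^{\bar R})t^i$ is determined by $\H(K^R)$, and the denominator on the right-hand side above is exactly the Golod denominator $1 - t\bigl(\ps^Q_{\bar R}(t) - 1\bigr)$, up to the factor $(1+t)^e = \ps^Q_k(t)$, for a regular presentation $Q\to\widehat{\bar R}$; here $\ps^Q_{\bar R}(t) - 1 = \sum_i\rank_k\H_i(K^{\bar R})t^{i+1}$ via the formality-free identity $\sum_j\rank_k\Tor[j]{Q}{\bar R}{k}t^j = \sum_i\rank_k\H_i(K^{\bar R})t^i$. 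Carrying out this bookkeeping, the right-hand side equals $(1+t)^e/\bigl(1 - t(\ps^Q_{\bar R}(t)-1)\bigr) = \ps^Q_k(t)/\bigl(1 - t(\ps^Q_{\bar R}(t)-1)\bigr)$, which by the Serre bound \cref{golod_bound} is $\succcurlyeq \ps^{\bar R}_k(t)$, with equality iff $\bar R$ is Golod.

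The third step is the left-hand side. I would invoke the Avramov--Levin type exact sequence (or the Gulliksen long exact sequence for the map $R\to\bar R$ with kernel $\soc(R)\cong k$): there is a coefficientwise relation expressing $\ps^{\bar R}_k(t)$ in terms of $\ps^R_k(t)$. Concretely, the short exact sequence $0\to k\to R\to\bar R\to 0$ of $R$-modules, combined with the fact that $\soc(R)\cong k$ is a trivial $R$-module, yields via the change-of-rings spectral sequence the identity $\ps^{\bar R}_k(t) = \ps^R_k(t)/\bigl(1 - t^2\ps^R_k(t)\bigr)$ whenever the relevant Golod-type condition on $R\to\bar R$ holds, and in general a coefficientwise inequality $\ps^{\bar R}_k(t)\preccurlyeq \ps^R_k(t)/\bigl(1-t^2\ps^R_k(t)\bigr)$ in one direction. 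Wait --- I need to be careful about which direction is automatic and which requires a hypothesis; the surjection $R\to \bar R$ is a Golod homomorphism (a ``small'' homomorphism onto a socle quotient is always Golod by Levin's theorem \cite{Levin:1976}), so in fact $\ps^{\bar R}_k(t) = \ps^R_k(t)/\bigl(1-t(\ps^R_{\bar R}(t)-1)\bigr)$ holds as an equality, and since $\ps^R_{\bar R}(t) = 1 + t^2\ps^R_k(t)$ (the minimal $R$-resolution of $\bar R$ is $k$-shifted by the single socle generator, periodically), the left-hand side $\ps^R_k(t)/\bigl(1-t^2\ps^R_k(t)\bigr)$ is exactly $\ps^{\bar R}_k(t)$. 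Chaining the two equalities with the single inequality from step two gives the claimed coefficientwise inequality, and equality throughout holds iff $\bar R$ is Golod, i.e. iff $R$ is almost Golod.

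The main obstacle I anticipate is the careful verification that $\ps^R_{\bar R}(t) = 1 + t^2\ps^R_k(t)$ and that $R\to R/\soc(R)$ is a Golod homomorphism; these are the twin facts that convert both the left-hand side and the Serre bound on the right-hand side into clean closed forms, and they are precisely where the Gorenstein (so that $\soc(R)\cong k$) and artinian hypotheses are used. Once those are in place the argument is pure generating-function manipulation. I would present the artinian reduction first, then handle the socle quotient on both sides in parallel, and finally assemble the chain of (in)equalities, noting at each step exactly where equality is equivalent to Golodness of $R/\soc(R)$; comparing with \cite[Proposition~6.2]{Rossi/Sega:2014} for the case $d=0$ and equality serves as a useful consistency check.
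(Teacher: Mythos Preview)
Your approach is essentially the paper's: reduce to the artinian case via Nagata's formula, identify the left-hand side with $\ps^{R/\soc(R)}_k(t)$, and recognize the right-hand side as the Serre bound for $R/\soc(R)$, so that the only inequality is \cref{golod_bound} applied to the socle quotient.

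Two corrections to the bookkeeping. First, for artinian Gorenstein $R$ (not a hypersurface) the identity $\ps^{R/\soc(R)}_k(t) = \ps^R_k(t)/(1-t^2\ps^R_k(t))$ is exactly \cite[Theorem~2]{Avramov/Levin:1978}; your detour through Golodness of $R\to R/\soc(R)$ is unnecessary, and your formula $\ps^R_{\bar R}(t)=1+t^2\ps^R_k(t)$ has a spurious factor of $t$ (it should read $1+t\ps^R_k(t)$, though once you multiply by $t$ in the Serre denominator it comes out right). Second, your claim $\rank_k\H_i(K^{\bar R})=\rank_k\H_i(K^R)+\rank_k\H_{i+1}(K^R)$ is not correct; the relation from \cite[Theorem~1]{Avramov/Levin:1978} is
\[
\ps^Q_{\bar R}(t) = \ps^Q_R(t) + t(1+t)^e - t^e - t^{e+1}
\]
(for artinian $R$ of embedding dimension $e$), and substituting this into $1-t(\ps^Q_{\bar R}(t)-1)$ is what produces the stated denominator on the right. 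With these two Avramov--Levin theorems cited directly, your chain of (in)equalities becomes exactly the paper's proof.
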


While the left-hand side is not equal to $\ps^R_k(t)$, it increases monotonically with $\ps^R_k(t)$, and so it directly measures the growth of the resolution of $k$. Therefore within the class of Gorenstein local rings, almost Golod rings display extremal behavior analogous to Golod rings.

\begin{proof}
As $R$ is Gorenstein, prime avoidance yields a regular sequence $\bm{x}=x_1,\ldots ,x_d$ that is part of a minimal generating set for $\fm_R$, so that $\bar{R}=R/( \bm{x})$ is artinian Gorenstein and of embedding dimension $e-d$. By Nagata's theorem \cite[Section 27]{Nagata:1962} we have $\ps^R_k(t)=\ps^{\bar{R}}_k(t)(1+t)^d$ and so the first equality below holds:
\begin{align*}
\frac{\ps^R_k(t)}{(1+t)^d-t^2 \ps^R_k(t)} &=\frac{\ps^{\bar{R}}_k(t)}{1-t^2 \ps^{\bar{R}}_k(t)}\\ 
&=\ps^{\bar{R}/\soc(\bar{R})}_k(t) \\
&\preccurlyeq \frac{\ps^Q_k(t)}{1 - t (\ps^Q_{\bar{R}/\soc(\bar{R})}(t) - 1)}\\ 
&= \frac{\ps^Q_{k}(t)}{1 - t (\ps^Q_{\bar{R}}(t) - t^{e-d} + t \ps^Q_{k}(t) - t^{e-d+1} - 1)}\,.
\end{align*}
The second equality holds using \cite[Theorem~2]{Avramov/Levin:1978}, the coefficientwise inequality is the Serre bound \cref{golod_bound} for $\bar{R}/\soc(\bar{R})$, and the last equality holds using \cite[Theorem~1]{Avramov/Levin:1978}. Since $\ps^Q_{k}(t)=(1+t)^{e-d}$ and $\ps^Q_{\bar{R}}(t)=\sum_{i=0}^{i=e-d}\rank_k \H_i(K^R) t^{i}$, we obtain the claimed inequality.
It remains to note that $R$ is almost Golod if and only if $\bar{R}/\soc(\bar{R})$ is Golod, if and only if equality holds in the third line above.
\end{proof}

Later, in \cref{e:koszul-ai:minl-non-golod}, we explicitly construct resolutions over almost Golod Gorenstein rings that achieve the bound of \cref{p_almost_golod_poincare}. 

\subsection{Monomial rings} \label{e:monomial}

In this subsection we consider rings of the form $R=Q/I$, where $Q=k[x_1,\ldots,x_n]$ and $I$ is generated by monomials $m_1,\ldots,m_r$. By Fr\"oberg's theorem \cite{Froeberg:1975}, $R$ is Koszul as a $k$-algebra if and only if each $m_i$ is quadratic. However, the condition that $R$ is Cohen Koszul is more common and more subtle.

\begin{example}[Almost linear monomial ring] \label{e_almost_linear_monomial}
As discussed in \cref{r_list_of_examples_almost_golod_gor}, graded Gorenstein rings having almost linear resolutions are Cohen Koszul. To give an explicit example, let $I$ be the ideal in $Q=k[x_1,\ldots,x_8]$ generated by
\[
x_{2}x_{4}x_{5},\,x_{1}x_{3}x_{6},\,x_{2}x_{5}x_{6},\,x_{3}x_{5}x_{6},\,x_{1}x_{3}x_{7},\,x_{1}x_{4}x_{7},\,x_{2}x_{4}x_{7},\,x_{2}x_{6}x_{7},\quad
\]
\[
\quad x_{3}x_{6}x_{7},\,x_{4}x_{6}x_{7},\,x_{1}x_{3}x_{8},\,x_{1}x_{4}x_{8},\,x_{2}x_{4}x_{8},\,x_{1}x_{5}x_{8},\,x_{2}x_{5}x_{8},\,x_{3}x_{5}x_{8}.
\]
The Betti table of $R=Q/I$ is
\[
\begin{tabular}{c|c c c c c }
      & 0 & 1 & 2 & 3 & 4 \\ \hline
      \text{0} & 1 & $\cdot$ & $\cdot$ & $\cdot$ & $\cdot$ \\
      \text{1} & $\cdot$ & $\cdot$ & $\cdot$ & $\cdot$ & $\cdot$ \\
      \text{2} & $\cdot$ & 16 & 30 & 16 & $\cdot$ \\
      \text{3} & $\cdot$ & $\cdot$ & $\cdot$ & $\cdot$ & $\cdot$ \\
      \text{4} & $\cdot$ & $\cdot$ & $\cdot$ & $\cdot$ & 1 \nospacepunct{.}
\end{tabular}
\]
Of course, this example was found with the help of \texttt{Macaulay2}~\cite{M2}; it is the Stanley--Reisner ring of a triangulation of the $3$-sphere, taken from the enumeration compiled by Lutz \cite{Lutz:manifold}.
\end{example}

An exact combinatorial characterization of which monomial rings are Cohen Koszul would be very interesting; this seems possible but likely non-trivial. We describe a special case that produces a large number of explicit examples. 

Let $Q=k[x_1,\ldots,x_n]$ be a polynomial ring over a field $k$. A monomial ideal $I$ is called \emph{dominant} if it is generated by a set of monomials $G$ such that for all $m\in G$ there is a variable $x_i$ and an integer $a$ such that $x_i^a$ divides $m$ and $x_i^a$ does not divide any monomial $m'\in G\smallsetminus\{m\}$; see \cite[Definition~4.1]{Alesandroni:2017}.

\begin{proposition}\label{p_monomial_example}
If $Q=k[x_1,\ldots,x_n]$ be a polynomial ring over a field $k$ and $I$ is a dominant monomial ideal in $Q$ then $R=Q/I$ is Cohen Koszul.
\end{proposition}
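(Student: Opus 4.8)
The plan is to reduce the statement to the case of a dominant ideal in an Artinian monomial ring and then exhibit a Golod homomorphism to which \cref{e:koszul:golod} applies. First I would recall Alesandroni's structure result \cite{Alesandroni:2017}: a dominant monomial ideal $I\subseteq Q=k[x_1,\ldots,x_n]$ has a minimal free resolution that is described combinatorially (the Lyubeznik/Taylor complex collapses nicely), and, more importantly for us, the quotient $R=Q/I$ is a Golod ring. Indeed, dominant ideals were introduced precisely to produce large families of monomial ideals whose Taylor resolution supports a trivial Massey operation, hence $R$ is Golod. Granting this, the minimal Cohen presentation of $R$ (which, being an equicharacteristic graded ring, does not require completing by \cref{r_characteristic_and_koszul_complex}) is the surjection $Q\to R$, and this is a Golod homomorphism. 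By \cref{e:koszul:golod} a surjective Golod homomorphism is Koszul, so $R$ is Cohen Koszul.

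More carefully, the one subtlety is the difference between $R$ being a Golod \emph{ring} and $Q\to R$ being a Golod \emph{homomorphism}. When $Q$ is regular local (or a polynomial ring, localized at the irrelevant maximal ideal) these coincide: $R$ is a Golod ring exactly when the surjection from a Cohen presentation $Q\to \widehat R$ is a Golod homomorphism, by the definition of Golod ring via the Serre bound \cref{golod_bound} with $M=k$ and the fact that $\ps^Q_k(t)=(1+t)^n$. So the chain of implications is: $I$ dominant $\Rightarrow$ $R=Q/I$ is a Golod ring (Alesandroni) $\Rightarrow$ the minimal Cohen presentation $Q\to R$ is a Golod homomorphism $\Rightarrow$ $Q\to R$ is a Koszul homomorphism (\cref{e:koszul:golod}) $\Rightarrow$ $R$ is Cohen Koszul (\cref{d_Cohen_Koszul}, noting uniqueness of the minimal Cohen presentation in the equicharacteristic graded setting).

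I expect the main obstacle to be simply citing or reproving the Golodness of $R$ in the appropriate generality. The cleanest path is to invoke \cite[Theorem~4.9 or Corollary~4.10]{Alesandroni:2017} (or whichever statement there asserts that quotients by dominant ideals are Golod); if the reference only addresses the resolution and not Golodness directly, one would need the short additional observation that a minimal free resolution which is a direct summand-wise "Lyubeznik-type" complex built from the generators carries a trivial Massey operation, e.g.\ by Herzog--Huneke's theorem \cite{} on powers of ideals or by the explicit combinatorics—but since the paper is content to cite Fröberg and other monomial-ring facts, a citation should suffice here. The rest of the argument is formal and follows immediately from the machinery already in place, in particular from \cref{e:koszul:golod}.

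\begin{proof}
Since $Q=k[x_1,\ldots,x_n]$ and $R=Q/I$ are graded, $R$ is equicharacteristic and already a quotient of a regular local ring (after localizing at the irrelevant maximal ideal), so by \cref{r_characteristic_and_koszul_complex} there is no need to pass to the completion, and the minimal Cohen presentation of $R$ is the surjection $\phi\colon Q\to R$. It therefore suffices to show that $\phi$ is a Koszul homomorphism.

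By \cite{Alesandroni:2017}, if $I$ is a dominant monomial ideal then $R=Q/I$ is a Golod ring. Equivalently, the Serre bound \cref{golod_bound} is an equality for $M=k$; since $\ps^Q_k(t)=(1+t)^n$, this says exactly that $\phi\colon Q\to R$ is a surjective Golod homomorphism in the sense of \cref{e:koszul:golod}. By \cref{e:koszul:golod}, every surjective Golod homomorphism is Koszul, so $\phi$ is Koszul. As $\phi$ is the minimal Cohen presentation of $R$, it follows from \cref{d_Cohen_Koszul} that $R$ is Cohen Koszul.
\end{proof}
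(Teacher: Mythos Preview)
Your argument rests on the claim that a dominant monomial ideal defines a Golod ring, attributed to \cite{Alesandroni:2017}. This is false. What Alesandroni proves is that $I$ is dominant if and only if the Taylor resolution of $Q/I$ is \emph{minimal}; there is no statement about Golodness. Indeed, the paper immediately follows the proposition with the example $R=k\llbracket a,b,c,d,e,f\rrbracket/(abc,cd,ae,acf)$, which is dominant (each generator has a variable appearing in no other generator) but not Golod: using Gemeda's product on the Taylor complex, the classes $e_{\{2\}}$ and $e_{\{3\}}$ corresponding to $cd$ and $ae$ satisfy $\gcd(cd,ae)=1$, so $e_{\{2\}}\cdot e_{\{3\}}=e_{\{2,3\}}\neq 0$ in $\Tor{Q}{R}{k}$. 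A nonzero product in Koszul homology already rules out Golodness.

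The paper's proof proceeds differently. Minimality of the Taylor resolution $A$ (from Alesandroni) together with Gemeda's dg algebra structure gives formality of $R\lotimes_Q k=A\otimes_Q k$ for free, by \cref{r_minres}. The remaining work is to show that $A\otimes_Q k$ is Koszul as a $k$-algebra: one identifies $A\otimes_Q k$ with a quotient of a free graded-commutative algebra on generators $e_I$ (one for each connected subgraph of the gcd-graph on the generators of $I$), subject to the quadratic monomial relations $e_Ie_J=0$ when $\gcd(m_I,m_J)\neq 1$, and then invokes Fr\"oberg's theorem that quadratic monomial quotients of free graded-commutative algebras are Koszul. Your shortcut via Golodness is not available.
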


\begin{proof}
The Taylor resolution $A$ of $R$ over $Q$ has a basis $\{e_I\}$ indexed by subsets $I\subseteq \{1,\ldots,n\}$, with $e_I$ in homological degree $|I|$, and the differential is defined by 
\[
\partial(e_I)=\sum_{i\in I} \pm \frac{m_I}{m_{I\smallsetminus \{i\}}} e_{I\smallsetminus \{i\}}\,,
\]
where $m_I=\operatorname{lcm}\set{m_i}{i\in I}$; see \cite{Taylor:1966} for details and signs. The hypothesis of the proposition exactly guarantees that the Taylor resolution is minimal by \cite[Theorem~4.4]{Alesandroni:2017}.

Gemeda \cite{Gemeda:1976} proved that the Taylor resolution has a dg algebra structure with product
\[
e_Ie_J = \pm \frac{m_Im_J}{m_{I\cup J}}e_{I\cup J}
\]
when $I\cap J=\varnothing$, and with $e_Ie_J = 0$ otherwise. By \cref{r_minres} it follows that $R\lotimes_Qk=A\otimes_Qk$ is formal.

It remains to show that $A\otimes_Qk$ is a Koszul $k$-algebra. Let $M$ be the graph with vertices $\{1,\ldots, r\}$ and an edge connecting $i$ and $j$ if and only if $\gcd(m_i,m_j)\neq 1$. From the description of $A$ above it follows that
\[
A\otimes_Qk = \frac{k[e_I \mid I\subseteq M \text{ connected}]}{(e_Ie_J \mid \gcd(m_I,m_J)\neq 1)}\,,
\]
where $k[e_I]$ is the free graded-commutative algebra on the indicated $e_I$; compare this with \cite[6.2]{Berglund:2005}. Assigning each $e_I$ weight $1$, we are done because quadratic monomial quotients of free graded-commutative algebras are Koszul by Fr\"oberg's theorem \cite{Froeberg:1975} (such rings belong to class B in \cite[Section 3]{Froeberg:1975}, and Fr\"oberg constructs linear resolutions of the residue field for all rings of class B).
\end{proof}

\begin{remark}
One can readily exhibit monomial rings satisfying the hypothesis of the proposition, and not falling into the other classes described above. For example, $R= k\llbracket a,b,c,d,e,f\rrbracket/(abc,cd,ae,acf)$.
\end{remark}

The next examples are $k$-algebras that fail to be Cohen Koszul; the first is a Koszul $k$-algebra and the second has $\H(K^R)$ a Koszul $k$-algebra. Both examples fail to be Cohen Koszul since in each case $K^R$ admits a nonzero triple Massey product, and hence is not formal; cf.\@ \cite{May:1969} for more details on Massey products. 

\begin{example} \label{e:koszul:nonexample}
The $k$-algebra $R=k\llbracket a,b,c,d\rrbracket/(a^2,ab,bc,cd,d^2)$ is the completion of a Koszul $k$-algebra (in the classical sense) by \cite[Corollary~1]{Froeberg:1975}. However, the map $k \llbracket a,b,c,d \rrbracket \to R$ is not Koszul. Indeed, by \cite[Example~5.1.4]{Avramov:1981}, $K^R$ has a nonzero triple Massey product, and so $K^R$ is not formal.
\end{example}

\begin{example}\label{e:koszul:nonexample_Katthan}
Let $Q=k\llbracket a,b,c,d,e\rrbracket$ and consider the quotient map
\[ 
\phi\colon Q\to R\coloneqq Q/(ab^2,cd^2,e^3,abcd,d^2e^2,b^2e^2,ace,b^2d^2e)\,.
\]
In \cite[Theorem~3.1]{Katthan:2017}, it is shown that $\H(K^R)$ is a trivial extension that admits a nonzero triple Massey product; the latter is an obstruction to the formality of $K^R$, while the former justifies that $\H(K^R)$ is a Koszul $k$-algebra.
\end{example}

\begin{remark}\label{r_polarisation}
To any monomial ideal $I\subseteq Q$ one may associate a square-free monomial ideal $I^\circ$ in a larger polynomial ring $Q^\circ$, known as the \emph{polarization} of $I$. Fr\"oberg \cite{Froeberg:1982} proved that there is a regular sequence of linear forms $y_1,\ldots,y_t$ in the quotient $R^\circ=Q^\circ/I^\circ$ such that $ R=R^\circ/(y_1,\ldots,y_t)$. From \cref{p_change_of_rings} it follows that $R$ is Cohen Koszul if and only if $R^\circ$ is Cohen Koszul.
\end{remark}

A \emph{simplicial complex} $\Delta$ on $[n]=\{1,\ldots,n\}$ is a nonempty family of subsets of $[n]$, closed under taking subsets. The \emph{Stanley--Reisner ring} associated to $\Delta$, denoted $k[\Delta]$, is the quotient of $k[x_1,\ldots,x_n]$ by the ideal generated by monomials $x_{i_1}\cdots x_{i_t}$ such that $\{i_1,\ldots,i_t\}\notin \Delta$. Every square free monomial ring is the Stanley--Reisner ring of some simplical complex, and so by \cref{r_polarisation} we may restrict to such monomials rings.

\begin{remark}\label{r_toric_top} We make some remarks about the connections to toric topology; for precise definitions and background on this area the reader may consult \cite{Buchstaber/Panov:2015}. 

To a simplical complex $\Delta$ on $[n]$ one also associates the moment angle complex $\mathcal{Z}_\Delta$, a finite CW-complex with an action of the torus $(S^1)^n$. The homotopy quotient $\mathcal{DJ}_\Delta=\mathcal{Z}_\Delta/\!/(S^1)^n$ is known as the Davis--Januszkiewicz space of $\Delta$. By \cite[Theorem~4.8]{Davis/Januszkiewicz:1991} and \cite[Theorem~4.8]{Notbhom/Ray:2005} the cochain algebra of this space is formal, and quasi-isomorphic to the Stanley--Reisner ring:
\[
C^*(\mathcal{DJ}_\Delta;k)\simeq k[\Delta]\,,
\]
where the variables $x_i$ are given cohomological degree $2$. From this it follows that $k[\Delta]$ is a Koszul $k$-algebra if and only if $\mathcal{DJ}_\Delta$ is a Koszul space in the sense of \cite{Berglund:2014}. As remarked in \cite[Example 5.8]{Berglund:2014}, this happens exactly when $k[\Delta]$ is a quadratic algebra, or equivalently if $\Delta$ is a \emph{flag complex}, that is, the minimal faces not belonging to $\Delta$ are all edges.

The question of when $\mathcal{Z}_\Delta$ is a Koszul space seems to be more interesting. By \cite[Lemma~3.1]{Buchstaber/Panov:2015} 
there is a quasi-isomorphism of dg $k$-algebras
\[
C^*(\mathcal{Z}_\Delta;k)\simeq k[\Delta]\lotimes_Qk\,.
\]
Thus $k[\Delta]$ is Cohen Koszul if and only if $\mathcal{Z}_\Delta$ is a Koszul space, since the latter means that $C^*(\mathcal{Z}_\Delta;k)$ is formal with Koszul homology algebra. The related condition that $\mathcal{Z}_\Delta$ is formal has been investigated in \cite{Denham/Suciu:2007,Limonchenko:2019}. 

The almost Golod condition is also connected with the minimally non-Golod condition for simplicial complexes introduced in \cite{Berglund/Joellenbeck:2007}. Indeed, the proof of \cite[Theorem~1.1]{Amelotte:2020} shows that if $M=\mathcal{Z}_\Delta$ is a moment angle manifold, and if $M\smallsetminus\{{\rm pt}\}$ is rationally homotopy equivalent to a wedge of spheres, then $\Delta$ is minimally non-Golod (over $\mathbb{Q}$).
\end{remark}

\section{Background on \texorpdfstring{$\ai$}{A-infinity}-algebras and coalgebras} \label{sec:ai}

Stasheff introduced $\ai$-algebras in topology to characterize loop spaces \cite{Stasheff:1963a,Stasheff:1963b}, and they have since proven a powerful tool in algebra as a flexible generalization of dg algebras; for an overview see \cite{Keller:2001}. In our context, the minimal $Q$-free resolution of a finite $Q$-algebra $R$ can be equipped with an $\ai$-algebra structure (see \cref{sec:transfer-ai}), and this will be leveraged to characterize Koszul homomorphisms in terms of presentations similar to the quadratic presentations for classical Koszul algebras (see \cref{sec:koszul-via-ai}). 

From now on $Q$ is always a local ring with maximal ideal $\fm_Q$ and residue field $k$, and unadorned tensor products and Hom sets are taken over $Q$. 

\begin{chunk} \label{ai-alg}
An \emph{$\ai$-algebra} is a graded $Q$-module $A$ equipped with $Q$-linear maps
\begin{equation*}
m_n \colon A^{\otimes n} \to A \quad\text{for } n \geqslant 1
\end{equation*}
of degree $(n-2)$ satisfying the \emph{Stasheff identities}
\begin{equation} \label{eq:stasheff-alg}
\sum_{\substack{r+s+t=n\\r,t \geqslant 0, s \geqslant 1}} (-1)^{r+st} m_{r+1+t} \left(\id^{\otimes r} \otimes m_s \otimes \id^{\otimes t}\right) = 0\,.
\end{equation}
Taking $n=1$ this says that $m_1$ is a degree $-1$ square zero endomorphism of $A$, so we can (and will) make $A$ a complex with $\partial=m_1$. Taking $n=2$ yields a product satisfying the Leibniz rule $\partial m_2=m_2(\partial\otimes \id+\id\otimes \partial)$. The next Stasheff identity, for $n=3$, can be interpreted as saying that $m_2$ is associative up to a homotopy given by $m_3$, that is, 
\begin{equation*}
m_2( \id \otimes m_2-m_2\otimes \id) = \partial m_3+m_3(\partial \otimes \id\otimes \id+\id \otimes \partial \otimes \id + \id \otimes \id\otimes\partial )\,.
\end{equation*}

If for some $n$ the Stasheff identity \cref{eq:stasheff-alg} holds for every integer less than $n$, then the obstruction
\begin{equation}\label{eq_obstruction}
\obs{A}{n} \coloneqq \sum_{\substack{r+s+t=n\\r,t \geqslant 0, n > s > 1}} (-1)^{r+st} m_{r+1+t} \left(\id^{\otimes r} \otimes m_s \otimes \id^{\otimes t}\right)
\end{equation}
is a chain map $A^{\otimes n}\to A$; see \cite[Corollaire~B.1.2]{LefevreHasegawa:2003}. 

A \emph{morphism of $\ai$-algebras} $\phi \colon A \to B$ consists of $Q$-linear maps
\begin{equation*}
\phi_n \colon A^{\otimes n} \to B \quad\text{for } n \geqslant 1
\end{equation*}
of degree $(n-1)$ satisfying
\begin{equation} \label{eq:stasheff-alg-mor}
\begin{aligned}
\sum_{\substack{r+s+t=n\\r,t \geqslant 0, s \geqslant 1}} (-1)^{r+st} \phi_{r+1+t} & \left(\id^{\otimes r} \otimes m^A_s \otimes \id^{\otimes t}\right) \\
&= \sum_{p=1}^n \sum_{\substack{\bmalpha \in \BN^p\\|\bmalpha|=n}} (-1)^{v(\bmalpha)} m^B_p (\phi_{\alpha_1} \otimes \cdots \otimes \phi_{\alpha_p})
\end{aligned}
\end{equation}
where $\bmalpha = (\alpha_1, \ldots, \alpha_p)$ and $|\bmalpha|=\sum_{k=1}^p \alpha_k$, with $v(\bmalpha) = \sum_{k=1}^p (p-k) (\alpha_k - 1)$. 

If for some $n$ the Stasheff identity \cref{eq:stasheff-alg-mor} holds for every integer less than $n$, then we define
\begin{equation*}
\begin{aligned}
\obs{\phi}{n} \coloneqq \sum_{\substack{r+s+t=n\\r,t \geqslant 0, s \geqslant 2}} (-1)^{r+st} \phi_{r+1+t} & \left(\id^{\otimes r} \otimes m^A_s \otimes \id^{\otimes t}\right) \\
&- \sum_{p=2}^{n} \sum_{\substack{\bmalpha \in \BN^p\\|\bmalpha|=n}} (-1)^{v(\bmalpha)} m^B_p (\phi_{\alpha_1} \otimes \cdots \otimes \phi_{\alpha_p})\,.
\end{aligned}
\end{equation*}
Then the Stasheff identity \cref{eq:stasheff-alg-mor} holds if and only if
\begin{equation*}
\obs{\phi}{n} = m_1 \phi_n + (-1)^n \phi_n( m_1 \otimes \id^{\otimes (n-1)} + \cdots + \id^{\otimes (n-1)} \otimes m_1)\,.
\end{equation*}

A morphism $\phi$ of $\ai$-algebras is a \emph{quasi-isomorphism} if the chain map $\phi_1$ is a quasi-isomorphism of complexes. The morphism $\phi$ is \emph{strict} if $\phi_n = 0$ for $n > 1$.  In this case \cref{eq:stasheff-alg-mor} simplifies to
\begin{equation} \label{eq:stasheff-alg-strict-mor}
\phi_1 m_n^A = m_n^B (\phi_1 \otimes \cdots \otimes \phi_1)\,.
\end{equation}

The composition of morphisms $\phi \colon A \to B$ and $\psi \colon B \to C$ is defined by
\begin{equation*}
(\psi \circ \phi)_n \coloneqq \sum_{p=1}^n \sum_{\substack{\bmalpha \in \BN^p\\|\bmalpha|=n}} (-1)^{v(\bmalpha)} \psi_p (\phi_{\alpha_1} \otimes \cdots \otimes \phi_{\alpha_p})\,.
\end{equation*}
\end{chunk}

\begin{chunk} \label{ai-alg-strictly-unital}
An $\ai$-algebra $A$ is \emph{strictly unital} if there exists $1_A \in A_0$ such that
\begin{equation} \label{eq:unital-alg}
\begin{gathered}
m_2(1_A \otimes a) = a = m_2(a \otimes 1_A) \quad\text{for all } a \in A \quad\text{and} \\
m_n(a_1 \otimes \cdots \otimes a_{i-1} \otimes 1_A \otimes a_{i+1} \otimes \cdots \otimes a_n) = 0 \quad\text{for all } 1 \leqslant i \leqslant n
\end{gathered}
\end{equation}
for any $a_1, \ldots, a_n \in A$ and $n > 2$. A morphism of strictly unital $\ai$-algebras $\phi \colon A \to B$ is a morphism of $\ai$-algebras such that
\begin{equation} \label{eq:unital-alg-mor}
\begin{gathered}
\phi_1(1_A) = 1_B \quad\text{and} \\
\phi_n(a_1 \otimes \cdots \otimes a_{i-1} \otimes 1_A \otimes a_{i+1} \otimes \cdots \otimes a_n) = 0 \quad\text{for all } 1 \leqslant i \leqslant n
\end{gathered}
\end{equation}
for any $a_j \in A$ and $n > 1$.
\end{chunk}

\begin{chunk} \label{ai-alg-split-unit}
An $\ai$-algebra is \emph{connective} if it is concentrated in non-negative degrees. If $A = Q \oplus \bar{A}$ is a graded module and $1_A$ a free generator of the direct summand $Q$, then $A$ is an \emph{$\ai$-algebra with a split unit}. 

A split unital $\ai$-algebra structure on a graded module concentrated in non-negative degrees is equivalent to the existence of $Q$-linear maps
\begin{equation*}
\bar{m}_n \colon \bar{A}^{\otimes n} \to \bar{A} \quad\text{for } n \geqslant 1
\end{equation*}
of degree $(n-2)$ and $Q$-linear maps
\begin{equation*}
h_1 \colon \bar{A} \to Q \quad \text{and} \quad h_2 \colon \bar{A}^{\otimes 2} \to Q
\end{equation*}
of degrees $-1$ and $0$, respectively, such that for $n \neq 2,3$ the Stasheff identities \cref{eq:stasheff-alg} hold when replacing $m_i$ by $\bar{m}_i$, and for $n=2,3$
\begin{equation*}
\sum_{\substack{r+s+t=n\\r,t \geqslant 0, s \geqslant 1}} (-1)^{r+st} \bar{m}_{r+1+t} (\id^{\otimes r} \otimes \bar{m}_s \otimes \id^{\otimes t}) + (h_{n-1} \otimes \id - \id \otimes h_{n-1}) = 0\,,
\end{equation*}
and additionally
\begin{equation*}
\begin{gathered}
h_1 \bar{m}_1 = 0 \,,\quad h_1 \bar{m}_2 - h_2 (\bar{m}_1 \otimes \id + \id \otimes \bar{m}_1) = 0 \quad \text{and} \\
h_1 \bar{m}_3 + h_2 (\bar{m}_2 \otimes \id - \id \otimes \bar{m}_2) = 0\,;
\end{gathered}
\end{equation*}
the former replaces the second and third Stasheff identity and the latter supplements the first three Stasheff identities. In particular, for $n > 2$ the maps $\bar{m}_n$ are the appropriate restrictions of $m_n$. For $n=2$, we obtain $m_2$ by $\bar{m}_2 + h_2$ and additionally enforcing \cref{eq:unital-alg}. For $n=1$, we have $m_1 = \bar{m}_1 + h_1$. This treatment is similar to \cite[Section~3]{Burke:2015}, but is slightly more general since we allow $\bar{A}_0\neq 0$ and hence need $h_2$ as well as $h_1$. If $A$ were not connective then we would also need maps $h_n$ for $n\geqslant 3$. Taken together the $h_n$ will correspond to the curvature term on the bar construction of $A$; see \cref{bar_construction}.
\end{chunk}

\begin{remark}
Let $A$ be a connective $\ai$-algebra with a split unit. Then the projection $A \to Q$ onto the free summand containing the unit need not be a morphism of strictly unital $\ai$-algebras. In fact, this happens if and only if $h_1 = 0$ and $h_2 = 0$. Such $\ai$-algebras are called \emph{augmented}. 
\end{remark}

\begin{chunk}
Fix a graded coalgebra $(C,\Delta)$. Recall $C$ is \emph{counital} if there exists a counit map $\epsilon \colon C \to Q$ such that
\begin{equation*}
(\id \otimes \epsilon) \Delta = \id = (\epsilon \otimes \id) \Delta\,.
\end{equation*}
We say $C$ is a \emph{curved dg coalgebra} it is equipped with a coderivation $\partial$ of degree $-1$ and a curvature term $h \colon C \to Q$ of degree $-2$ such that 
\begin{equation*}
\partial^2 = (h \otimes \id - \id \otimes h) \Delta \quad\text{and}\quad h \partial = 0\,.
\end{equation*}
A curved dg coalgebra $C$ is \emph{connected} if it is non-negatively graded, counital and $C_0 = Q$. In this setting, we write $C = Q \oplus \bar{C}$ for $\bar{C} = \ker(\epsilon)$ and set
\begin{equation*}
\begin{gathered}
\bar{\Delta} \coloneqq \left(\bar{C} \to C \xra{\Delta} C \otimes C \to \bar{C} \otimes \bar{C}\right)\,, \\
\bar{\partial} \coloneqq \left(\bar{C} \to C \xra{\partial} C \to \bar{C}\right) \quad \text{and} \quad \bar{h} \coloneqq \left(\bar{C} \to C \xra{h} Q\right)
\end{gathered}
\end{equation*}
for the restrictions to $\bar{C}$. These maps satisfy the same relations as $\Delta$, $\partial$ and $h$. 
\end{chunk}

\begin{chunk} \label{tensor-co-algebra} \label{tensor-algebra}
The tensor algebra $\talg{V}$ on a graded $Q$-module $V$ has underlying graded module $\tmod{V} \coloneqq \bigoplus_{n \geqslant 0} V^{\otimes n}$, and the multiplication
\begin{equation*}
\mu((v_1 \otimes \cdots \otimes v_k) \otimes (v'_1 \otimes \cdots \otimes v'_\ell)) \coloneqq v_1 \otimes \cdots \otimes v_k \otimes v'_1 \otimes \cdots \otimes v'_\ell\,.
\end{equation*}
The tensor algebra is bigraded by $\talg[n]{V}_i=(V^{\otimes n})_i$. 

The tensor coalgebra $\tcoa{V}$ on a graded $Q$-module $V$ has underlying graded module $\tmod{V}$, and the comultiplication
\begin{equation*}
\Delta(v_1 \otimes \cdots \otimes v_n) \coloneqq \sum_{i=0}^n (v_1 \otimes \cdots \otimes v_i) \otimes (v_{i+1} \otimes \cdots \otimes v_n)\,.
\end{equation*}
The tensor coalgebra is bigraded by $\tcoa[n]{V}_i=(V^{\otimes n})_i$. The data of an $\ai$-algebra can equivalently be encoded as a differential on a tensor coalgebra, as we see next.
\end{chunk}

\begin{chunk} \label{bar_construction}
Let $A$ be a split unital connective $\ai$-algebra. Then the tensor coalgebra $\tcoa{\susp \bar{A}}$ has an induced curved dg coalgebra structure. The curvature term has components
\begin{equation*}
h_1 \shift^{-1} \colon \tcoa[1]{\susp \bar{A}} \to Q \quad \text{and} \quad h_2 (\shift^{-1})^{\otimes 2} \colon \tcoa[2]{\susp \bar{A}} \to Q
\end{equation*}
and zero otherwise. The coderivation $\partial$ has components
\begin{equation*}
(-1)^{\frac{k(k+1)}{2}} \sum_{\substack{i+j=n-k\\i,j \geqslant 0}} (\id^{\otimes i} \otimes \shift \bar{m}_{k} (\shift^{-1})^{\otimes k} \otimes \id^{\otimes j}) \colon \tcoa[n]{\susp \bar{A}} \to \tcoa[n-k+1]{\susp \bar{A}}
\end{equation*}
for $k \geqslant 1$, and zero otherwise. The map $\partial$ is well-defined since $A$ is concentrated in non-negative homological degree. With this structure $\tcoa{\susp \bar{A}}$ is a connected curved dg coalgebra, and we call
\begin{equation*}
\bc[\star]{A}_\bullet \coloneqq \left(\tcoa[\star]{\susp \bar{A}}_\bullet,h,\partial,\Delta\right)
\end{equation*}
the \emph{bar construction of $A$}. For $a_1, \ldots, a_n \in \bar{A}$ we write
\begin{equation*}
[a_1 | \ldots | a_n] \coloneqq (\shift a_1 \otimes \cdots \otimes \shift a_n) \in \bc[n]{A}\,.
\end{equation*}
For a split unital connective $\ai$-algebra the canonical projection and inclusion induce a degree $-1$ map of graded modules 
\begin{equation*}
\bc{A} \surjar \susp \bar{A} \injar \susp A \to A\,.
\end{equation*}

Let $C$ be a connected curved dg coalgebra. Then the algebra $\talg{\susp^{-1} \bar{C}}$ has an induced dg algebra structure. The differential $m_1$ has components
\begin{equation*}
\begin{gathered}
-\bar{h} \shift \colon \talg[1]{\susp^{-1} \bar{C}} \to \talg[0]{\susp^{-1} \bar{C}} \,,\quad -\shift^{-1} \bar{\partial} \shift \colon \talg[1]{\susp^{-1} \bar{C}} \to \talg[1]{\susp^{-1} \bar{C}} \\
\text{and}\quad (\shift^{-1})^{\otimes 2} \bar{\Delta} \shift \colon \talg[1]{\susp^{-1} \bar{C}} \to \talg[2]{\susp^{-1} \bar{C}}\,;
\end{gathered}
\end{equation*}
and zero otherwise. With this structure $\talg{\susp^{-1} \bar{C}}$ is a split unital connective dg algebra, and we call
\begin{equation*}
\cobc[\star]{C}_\bullet \coloneqq \left(\talg[\star]{\susp^{-1} \bar{C}}_\bullet,m_1,m_2\right)
\end{equation*}
the \emph{cobar construction of $C$}. For $c_1, \ldots, c_n \in \bar{C}$ we write
\begin{equation*}
\langle c_1 | \ldots | c_n \rangle \coloneqq (\shift^{-1} c_1 \otimes \cdots \otimes \shift^{-1} c_n) \in \cobc[n]{C}\,.
\end{equation*}
For a connected curved dg coalgebra the canonical inclusion and projection maps induces a degree $-1$ map of graded modules
\begin{equation*}
C \to \susp^{-1} C \surjar \susp^{-1} \bar{C} \injar \cobc{C}\,.
\end{equation*}
\end{chunk}

\begin{remark}
The bar and cobar constructions define an adjoint pair of functors when restricted to split unital connective dg algebras and connected curved dg coalgebras; see \cite[Section~3]{Lyubashenko:2013}. It remains an adjunction when restricted to augmented connective dg algebras and connected dg coalgebras.
\end{remark}

\begin{chunk} \label{morphism_coalgebra}
A morphism $\phi \colon (C,\Delta,\epsilon,\partial,h) \to (C',\Delta',\epsilon',\partial',h')$ of connected curved dg coalgebras consists of $Q$-linear maps
\begin{equation*}
\phi_0 \colon C \to Q \quad \text{and} \quad \phi_1 \colon C \to C'
\end{equation*}
of degree $-1$ and 0, respectively, satisfying
\begin{equation*}
\begin{gathered}
\epsilon = \epsilon' \phi_1 \,,\quad h' \phi_1 = h - \phi_0 \partial + (\phi_0 \otimes \phi_0) \Delta \,, \\
\partial' \phi_1 = \phi_1 \partial + (\phi_0 \otimes \phi_1 - \phi_1 \otimes \phi_0) \Delta \quad \text{and} \quad \Delta' \phi_1 = (\phi_1 \otimes \phi_1) \Delta\,;
\end{gathered}
\end{equation*}
see \cite[Chapter~4]{Posiselski:2011}. This induces a map of dg algebras $\cobc{\phi} \colon \cobc{C} \to \cobc{C'}$, and we say $\phi$ is a \emph{weak equivalence} if $\cobc{\phi}$ is a quasi-isomorphism.
\end{chunk}

\begin{chunk}
\label{ai-module}
Let $A$ be an $\ai$-algebra. An \emph{$\ai$-module over $A$} is a graded module $M$ equipped with maps
\begin{equation*}
m_n^M \colon A^{\otimes (n-1)} \otimes M \to M \quad\text{for } n \geqslant 1
\end{equation*}
of degree $(n-2)$, satisfying
\begin{equation*} \label{eq:stasheff-mod}
\sum_{\substack{r+s+t=n\\r \geqslant 0, s,t \geqslant 1}} (-1)^{r+st} m^M_{r+1+t} \left(\id^{\otimes r} \otimes m_s \otimes \id^{\otimes t}\right) + \sum_{\substack{r+s=n\\r \geqslant 0, s \geqslant 1}} (-1)^r m^M_{r+1} \left(\id^{\otimes r} \otimes m_s^M\right) = 0\,.
\end{equation*}

If $A$ is strictly unital, we say an $\ai$-module $M$ over $A$ is \emph{strictly unital} if
\begin{equation*}
\begin{gathered}
m_2(1_A \otimes m) = m \quad\text{for all } m \in M \quad\text{and} \\
m_n(a_1 \otimes \cdots \otimes a_{i-1} \otimes 1_A \otimes a_{i+1} \otimes \cdots \otimes a_{n-1} \otimes m) = 0 \quad \text{for all } 1 \leqslant i \leqslant n-1
\end{gathered}
\end{equation*}
for any $a_1, \ldots, a_{n-1} \in A$ and $m \in M$ with $n \neq 2$. 

If $A$ is connective and has a split unit, then a strictly unital $\ai$-module structure over $A$ on $M$ is equivalent to the existence of maps 
\begin{equation*}
\bar{m}^M_n \colon \bar{A}^{\otimes (n-1)} \otimes M \to M \quad\text{for } n \geqslant 1
\end{equation*}
of degree $(n-2)$ such that for $n \neq 2,3$ the Stasheff identities hold when replacing $m$ by $\bar{m}$, and for $n=2,3$ there is an extra curvature term $h_{n-1} \otimes \id$ similar to \cref{ai-alg-split-unit}.
\end{chunk}

\begin{chunk} \label{ai-mod-dg-mod-cobar-bar}
Let $A$ be a split unital connective $\ai$-algebra. The data of a strictly unital $\ai$-module structure over $A$ is equivalent to that of a strictly unital dg module structure over $\cobc{\bc{A}}$. Explicitly, if $\{\bar{m}^M_n\}$ is a strictly unital $\ai$-module structure on a graded module $M$, then the dg module structure on $M$ is given by the same differential $m^M_1$, and the multiplication $\cobc{\bc{A}} \otimes M \to M$ induced by
\begin{equation*}
-(-1)^{\frac{n(n-1)}{2}} m^M_{n+1} ((\shift^{-1})^{\otimes n} \shift \otimes \id_M) \colon \susp^{-1} \rbc[n]{A} \otimes M \to M\,.
\end{equation*}
Moreover, this construction is natural in $A$ and $M$, and any quasi-isomorphism of $\ai$-modules over $A$ yields a quasi-isomorphism of dg modules over $\cobc{\bc{A}}$. 
\end{chunk}

\section{Transfer of \texorpdfstring{$\ai$}{A-infinity}-algebra structures} \label{sec:transfer-ai}

In this section, as above, $Q$ is a local ring with maximal ideal $\fm_Q$ and residue field $k$. Let $R$ be an $\ai$-algebra over $Q$, and let $A \to R$ be a quasi-isomorphism of complexes over $Q$. We would like to know whether the $\ai$-algebra structure on $R$ induces an $\ai$-algebra structure on $A$. This is well-understood in the case that $Q$ is field, so that $A \to R$ is a homotopy equivalence; the first result is due to Kadeishvili \cite{Kadeishvili:1982} when $A = \H(R)$. For general homotopy equivalences this was studied, for example, in \cite{Markl:2006}. Burke has shown that if $R$ is a quotient of $Q$ and $A$ is a $Q$-free resolution of $R$, then the product on $R$ lifts to an $\ai$-structure on $A$ \cite[Proposition~3.6]{Burke:2018}. We give a proof in a more general situation.

\begin{proposition} \label{ai-along-qi}
Let $R$ be a strictly unital connective $\ai$-algebra and $\epsilon \colon A \to R$ a surjective quasi-isomorphism of complexes over $Q$, with $A$ degree-wise free and concentrated in non-negative degrees. Then there exists an $\ai$-algebra structure with a split unit on $A$ such that $\epsilon$ is a strict quasi-isomorphism of $\ai$-algebras.
\end{proposition}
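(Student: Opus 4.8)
The plan is to run the standard obstruction-theoretic (homological perturbation) induction, keeping the surjection $\epsilon$ strict at every stage. First I would set $m_1^A \coloneqq \partial^A$ (so $\epsilon m_1^A = m_1^R\epsilon$ automatically, since $\epsilon$ is a chain map) and pin down a unit: lift $1_R$ along the surjection $\epsilon$ to some $1_A \in A_0$. Since $A_0$ is free over the local ring $Q$ and $\epsilon(1_A) = 1_R \notin \fm_Q R$ (as in all situations of interest), the element $1_A$ generates a free rank-one $Q$-summand, giving a splitting $A = Q1_A \oplus \bar A$ with $\bar A$ again degree-wise free and concentrated in non-negative degrees. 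This puts us in the split-unit formalism of \cref{ai-alg-split-unit}: the task becomes to construct the reduced operations $\bar m_n^A\colon \bar A^{\otimes n}\to\bar A$ together with curvature maps $h_1\colon\bar A\to Q$, $h_2\colon\bar A^{\otimes 2}\to Q$, equivalently the strictly unital $m_n^A\colon A^{\otimes n}\to A$.

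The inductive step carries the real content. Assume strictly unital $m_1^A,\dots,m_{n-1}^A$ have been built satisfying the Stasheff identities \cref{eq:stasheff-alg} in all total lengths $<n$ and satisfying $\epsilon m_j^A = m_j^R\epsilon^{\otimes j}$ for $j<n$. By \cite[Corollaire~B.1.2]{LefevreHasegawa:2003} the obstruction $\obs{A}{n}$ of \cref{eq_obstruction} is then a chain map $A^{\otimes n}\to A$, and the length-$n$ Stasheff identity says exactly that $\obs{A}{n}$ equals, up to sign, the differential of $m_n^A$ in the complex $\operatorname{Hom}_Q(A^{\otimes n},A)$. I would then make two observations. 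First, expanding $\epsilon\circ\obs{A}{n}$ using the inductive strictness relations together with strict unitality gives $\epsilon\circ\obs{A}{n} = \obs{R}{n}\circ\epsilon^{\otimes n}$; since all Stasheff identities hold in the $\ai$-algebra $R$, $\obs{R}{n}$ is the differential of $m_n^R$, so $\epsilon_*\obs{A}{n}$ is the explicit coboundary of $m_n^R\epsilon^{\otimes n}$ in $\operatorname{Hom}_Q(A^{\otimes n},R)$. Second — the key homological input — the map of complexes $\epsilon_*\colon\operatorname{Hom}_Q(A^{\otimes n},A)\to\operatorname{Hom}_Q(A^{\otimes n},R)$ is surjective (as $A^{\otimes n}$ is degree-wise free and $\epsilon$ is surjective), with kernel $\operatorname{Hom}_Q(A^{\otimes n},K)$ for $K\coloneqq\ker\epsilon$, and this kernel is acyclic: $K$ is acyclic because $\epsilon$ is a quasi-isomorphism, and $A^{\otimes n}$ is a bounded-below complex of free $Q$-modules, so any chain map $A^{\otimes n}\to K$ is null-homotopic by the usual comparison argument. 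Hence $\epsilon_*$ is a surjective quasi-isomorphism, and the standard lifting lemma finishes the step: choose any $\beta\in\operatorname{Hom}_Q(A^{\otimes n},A)$ with $\epsilon_*\beta = m_n^R\epsilon^{\otimes n}$; then $\partial^{\mathrm{Hom}}\beta-\obs{A}{n}$ is a cycle in the acyclic kernel $\operatorname{Hom}_Q(A^{\otimes n},K)$, hence a boundary $\partial^{\mathrm{Hom}}\gamma$ with $\gamma$ in the kernel, and $m_n^A\coloneqq\beta-\gamma$ satisfies both the length-$n$ Stasheff identity and $\epsilon m_n^A = m_n^R\epsilon^{\otimes n}$. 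Strict unitality of $m_n^A$ is arranged by carrying this step out on the reduced Hom-complexes $\operatorname{Hom}_Q(\bar A^{\otimes n},-)$, as permitted by \cref{ai-alg-split-unit}; this leaves the acyclicity statement untouched since $\bar A^{\otimes n}$ is still a bounded-below complex of free $Q$-modules.

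Running the induction produces a split-unital $\ai$-algebra structure $\{m_n^A\}$ on $A$ with $\epsilon m_n^A = m_n^R\epsilon^{\otimes n}$ for all $n$, which by \cref{eq:stasheff-alg-strict-mor} is precisely the assertion that $\epsilon$, regarded as the strict map with first component $\epsilon$, is a morphism of $\ai$-algebras; it is a quasi-isomorphism because its first component is. I expect the main obstacle to a clean write-up — rather than a conceptual difficulty — to be the bookkeeping of the split unit through the induction: one must carry the curvature maps $h_1,h_2$ and the modified Stasheff identities of \cref{ai-alg-split-unit} alongside the $\bar m_n^A$ (or, alternatively, first build a possibly non-unital $\ai$-structure and then strictify the unit compatibly with $\epsilon$). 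Everything else is the standard obstruction template, whose only essential inputs are that $A$ is degree-wise free and bounded below, so that the kernel Hom-complex is acyclic, and that $\epsilon$ is a surjective quasi-isomorphism.
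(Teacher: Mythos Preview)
Your proposal is correct and follows essentially the same obstruction-theoretic approach as the paper: both construct $m_n^A$ inductively by showing that $\epsilon$ pushes the obstruction $\obs{A}{n}$ to a coboundary in $R$, and then lift using that $A^{\otimes n}$ (or $\bar A^{\otimes n}$) is a bounded-below complex of free modules while $\epsilon$ is a surjective quasi-isomorphism. The only cosmetic differences are that the paper handles $n=2$ separately via an explicit diagram (to impose the unit condition directly) and phrases the lifting through $\susp^{-1}\cone(\epsilon)$ rather than through the Hom-complex; the paper also explicitly records the check $\obs{A}{n}\eta_n=0$ that lets one pass to $\bar A^{\otimes n}$, which you subsume under the reference to \cref{ai-alg-split-unit}.
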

\begin{proof}
Since $\epsilon$ is surjective we may choose a splitting $A = \bar{A} \oplus Q$ such that $\epsilon$ maps the free generator of $Q$ to the unit of $R$. We inductively construct higher multiplication maps $m_n$ on $A$ satisfying the $n$th Stasheff identity. To begin with we set $m_1 \coloneqq \partial$ where $\partial$ is the differential of $A$. 

For $n=2$ we consider the commutative diagram
\begin{equation*}
\begin{tikzcd}
A \otimes Q + Q \otimes A \ar[rr] \ar[d] \&\& A \ar[d,"\simeq","\epsilon" swap] \\
A^{\otimes 2} \ar[r,"\epsilon^{\otimes 2}" swap] \ar[urr,dashed,"m_2^A"] \& R^{\otimes 2} \ar[r,"m_2^R" swap] \& R \nospacepunct{.}
\end{tikzcd}
\end{equation*}
The morphism  of complexes $m_2^A \colon A^{\otimes 2} \to A$ exists because $\epsilon$ is a surjective quasi-isomorphism and the left vertical arrow is injective in each degree and the cokernel in each degree is projective; see for example \cite[Section~7]{Dwyer/Spalinski:1995}. The morphism $m_2^A$ satisfies the desired properties by construction.

For $n > 2$, the obstruction $\obs{A}{n}$ from \cref{eq_obstruction} is a chain map. We have a short exact sequence of complexes
\begin{equation*}
0 \to \sum_{i+j=n-1} A^{\otimes i} \otimes Q \otimes A^{\otimes j} \xrightarrow{\eta_n} A^{\otimes n} \to \bar{A}^{\otimes n} \to 0\,.
\end{equation*}
By direct computation we obtain $\obs{A}{n} \eta_n = 0$. So the obstruction $\obs{A}{n}$ factors through $\bar{A}^{\otimes n}$. We consider the diagram 
\begin{equation*}
\begin{tikzcd}
\& \bar{A}^{\otimes n} \ar[d,"{\obs{A}{n}}" description] \ar[dr,"{0}"] \ar[dl,dashed,"{\alpha}" swap] \\
\susp^{-1} \cone(\epsilon) \ar[r,"{\pi}" swap] \& A \ar[r,"{\simeq}","{\epsilon}" swap] \& R \nospacepunct{.}
\end{tikzcd}
\end{equation*}
Since $\bar{A}^{\otimes n}$ is, as graded modules, a direct summand of $A^{\otimes n}$, and the higher multiplications $m_i^A$ for $i < n$ commute with $\epsilon$, the right triangle commutes up to the homotopy $m_n^R \epsilon^{\otimes n}$. Then there exists a chain map $\alpha$ such that the left triangle commutes up to a homotopy $\sigma$. Since $\epsilon$ is surjective, we may assume $m_n^R \epsilon^{\otimes n} = \epsilon \sigma$ by \cite[Proposition~1.3.1]{Avramov:1998}. That is 
\begin{equation*}
m^A_n \coloneqq \left(A^{\otimes n} \to \bar{A}^{\otimes n} \xrightarrow{\sigma} A\right)
\end{equation*}
satisfies the $n$th Stasheff identity \cref{eq:stasheff-alg-strict-mor}. 
\end{proof}

\begin{chunk}\label{c_ai_module}
In the setup of \cref{ai-along-qi} we can also transfer $\ai$-module structures: If $M$ is a strictly unital $\ai$-module over $R$ and with semifree resolution $\gamma\colon G \to M$ over $Q$, in the sense discussed later in \cref{c_semifree}, then there exists a strictly unital $\ai$-module structure on $G$ over $A$ and $\gamma$ is a strict morphism of $\ai$-modules; compare with \cite{Burke:2018}. When the homology of $M$ is bounded below (for example, if $M$ is an honest module), one can take $G$ to be a bounded below complex of free $Q$-modules.
\end{chunk}

\begin{proposition}\label{p_Ainfinity_lifting}
Let $\epsilon \colon R \to S$ be a surjective strict quasi-isomorphism of strictly unital $\ai$-algebras over $Q$. Further let $A$ be a split unital, connective, degree-wise free $\ai$-algebra and $\phi \colon A \to S$ a morphism of strictly unital $\ai$-algebras. Then there exists a morphism of strictly unital $\ai$-algebras $\psi \colon A \to R$ such that $\phi = \epsilon \psi$. 
\end{proposition}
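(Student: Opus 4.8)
The plan is to construct the components $\psi_n \colon A^{\otimes n} \to R$ inductively on $n$, exactly mirroring the proof of \cref{ai-along-qi}, but now lifting through the surjective quasi-isomorphism $\epsilon$ rather than constructing a structure from scratch. Since $\phi$ and the desired $\psi$ are strictly unital, we work with the split unit $A = \bar A \oplus Q$ and need only define the $\psi_n$ on $\bar A^{\otimes n}$, setting them to vanish whenever one tensor factor is the unit, and with $\psi_1$ required to be a chain map lifting $\phi_1$.

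First I would set up the base case. For $n=1$: $\phi_1 \colon A \to S$ is a morphism of complexes, $A$ is a bounded-below complex of free (hence projective in each degree) $Q$-modules, and $\epsilon \colon R \to S$ is a surjective quasi-isomorphism; the standard lifting lemma for semifree/degreewise-projective complexes (e.g.\ \cite[Proposition~1.3.1]{Avramov:1998}, as used already in the proof of \cref{ai-along-qi}) produces a chain map $\psi_1 \colon A \to R$ with $\epsilon\psi_1 = \phi_1$, and we may arrange $\psi_1$ to send the free unit generator of $Q$ to $1_R$. For the inductive step, suppose $\psi_1, \ldots, \psi_{n-1}$ have been constructed satisfying the morphism Stasheff identity \cref{eq:stasheff-alg-mor} in all degrees $< n$ and the strict unitality conditions \cref{eq:unital-alg-mor}. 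Then $\obs{\psi}{n}$, as defined in \cref{ai-alg}, is a chain map $A^{\otimes n} \to R$ built out of the lower $\psi_i$, the $m^A_s$, and the $m^R_p$; the task is to find $\psi_n$ with $m_1\psi_n + (-1)^n\psi_n(m_1\otimes\id^{\otimes(n-1)} + \cdots + \id^{\otimes(n-1)}\otimes m_1) = \obs{\psi}{n}$, i.e.\ to null-homotope $\obs{\psi}{n}$ as a map into $R$, while keeping it compatible with the split unit.

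The argument then runs: first observe that $\obs{\psi}{n}$ vanishes on the subcomplex $\sum_{i+j=n-1} A^{\otimes i}\otimes Q\otimes A^{\otimes j}$ — this is the direct computation using strict unitality of $\phi$, the $\psi_i$ ($i<n$), and the $\ai$-algebras, exactly as the identity $\obs{A}{n}\eta_n = 0$ in \cref{ai-along-qi} — so $\obs{\psi}{n}$ factors through $\bar A^{\otimes n}$, which is a degreewise-free summand of $A^{\otimes n}$. Next, because $\phi = \epsilon\psi^{<n}$ and all lower identities hold, the pushforward $\epsilon\,\obs{\psi}{n}$ equals $\obs{\phi}{n}$, which is zero since $\phi$ is a morphism of $\ai$-algebras; hence $\obs{\psi}{n}$ maps $\bar A^{\otimes n}$ into $R$ and becomes null-homotopic after composing with $\epsilon$. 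Considering the diagram with $\susp^{-1}\cone(\epsilon) \xrightarrow{\pi} R \xrightarrow{\epsilon} S$ and the chain map $\obs{\psi}{n}\colon \bar A^{\otimes n}\to R$ whose composite with $\epsilon$ is nullhomotopic, we lift to a chain map $\bar A^{\otimes n}\to \susp^{-1}\cone(\epsilon)$ (using that $\cone(\epsilon)$ is acyclic, $\epsilon$ being a quasi-isomorphism, and that $\bar A^{\otimes n}$ is bounded below and degreewise projective), and extract from it both a nullhomotopy and, via surjectivity of $\epsilon$ (invoking \cite[Proposition~1.3.1]{Avramov:1998} as in \cref{ai-along-qi}), a map $\sigma\colon \bar A^{\otimes n}\to R$ with $m_1\sigma + (-1)^n\sigma(\sum \id^{\otimes}\otimes m_1\otimes\id^{\otimes}) = \obs{\psi}{n}$ and with $\epsilon\sigma$ matching the data coming from $\phi_n$. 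Setting $\psi_n \colon A^{\otimes n}\twoheadrightarrow \bar A^{\otimes n}\xrightarrow{\sigma} R$ then satisfies the $n$th morphism Stasheff identity and the strict unitality condition, and by construction $\epsilon\psi_n = \phi_n$.

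I expect the main obstacle to be bookkeeping rather than conceptual: carefully verifying that $\obs{\psi}{n}$ indeed annihilates the unit subcomplex and that its $\epsilon$-image is precisely $\obs{\phi}{n}$ (so that vanishing of the latter is exactly what is needed), keeping all signs in \cref{eq:stasheff-alg-mor} and the composition formula consistent, and ensuring at each stage that the lift can be chosen strictly unital — i.e.\ that the homotopy/lift produced by the projectivity argument can be taken to vanish on tensor words containing the unit. This last point is handled by always first factoring through $\bar A^{\otimes n}$ before lifting, so that strict unitality is automatic; the acyclicity of $\cone(\epsilon)$ and degreewise projectivity of $\bar A^{\otimes n}$ (a summand of a tensor power of the semifree module $A$) are what make every lifting step go through.
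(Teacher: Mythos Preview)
Your proposal is correct and follows essentially the same inductive approach as the paper's proof: factor the obstruction through $\bar{A}^{\otimes n}$ using strict unitality, observe that $\epsilon$ carries $\obs{\psi}{n}$ to $\obs{\phi}{n}$, and lift a bounding homotopy through the surjective quasi-isomorphism $\epsilon$. One small wording slip: $\obs{\phi}{n}$ is not literally zero but is the boundary of $\phi_n$ in the Hom complex (this is exactly what the $n$th morphism Stasheff identity for $\phi$ says); you use this correctly a line later when you speak of a null-homotopy and of choosing $\sigma$ with $\epsilon\sigma=\phi_n$. The paper phrases the same step more tersely by noting that $\hom{\bar{A}^{\otimes n}}{R}\to\hom{\bar{A}^{\otimes n}}{S}$ is a quasi-isomorphism, so that $\epsilon[\obs{\psi}{n}]=[\obs{\phi}{n}]=0$ forces $[\obs{\psi}{n}]=0$.
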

\begin{proof}
The unit $Q\to S$ factors through $\epsilon$ and $\phi_1$, so by \cite[Section~7]{Dwyer/Spalinski:1995}, there is a chain map $\psi_1 \colon A \to R$ such that $\phi_1 = \epsilon \psi_1$ and $\psi_1(1_A) = 1_R$. 

Let $n \geqslant 2$ and assume that for $i < n$ the chain maps $\psi_i \colon A^{\otimes i} \to R$ exist, the $i$th Stasheff identities \cref{eq:stasheff-alg-mor} and \cref{eq:unital-alg-mor} hold, and $\epsilon \psi_i = \phi_i$. A computation shows that $\obs{\psi}{n}$ and $\obs{\phi}{n}$ vanish when any of its inputs is $1_A$. Hence we can view $\obs{\psi}{n}$ and $\obs{\phi}{n}$ as maps on $\bar{A}^{\otimes n}$. Taking homology classes in $\hom{\bar{A}^{\otimes n}}{S}$ we have
\begin{equation*}
\epsilon [\obs{\psi}{n}] = [\obs{\phi}{n}]= 0\,,
\end{equation*}
since $\epsilon \obs{\psi}{n} = \obs{\phi}{n}$ and $\phi$ is a morphism of strictly unital $\ai$-algebras; cf.\@ \cref{ai-alg}.
Since $\epsilon$ is a surjective quasi-isomorphism, and using the assumptions on $A$, the induced map $\hom{\bar{A}^{\otimes n}}{R}\to \hom{\bar{A}^{\otimes n}}{S}$ is a quasi-isomorphism and hence $\obs{\psi}{n}$ is a boundary in $\hom{\bar{A}^{\otimes n}}{R}$. That is, there is $\bar{\psi}_n \colon \bar{A}^{\otimes n}\to R$ such that 
\[
\obs{\psi}{n}=m_1^R\bar{\psi}_n+(-1)^n\bar{\psi}_n(\bar{m}_1^A \otimes \id^{\otimes n} + \cdots + \id^{\otimes n} \otimes \bar{m}_1^A)\,.
\]
Setting $\psi_n \coloneqq (A^{\otimes n} \to \bar{A}^{\otimes n} \xrightarrow{\bar{\psi}_n} R)$, we now have $\psi_1,\ldots, \psi_n$ satisfying the required identities \cref{eq:stasheff-alg-mor} and \cref{eq:unital-alg-mor}, completing the induction.
\end{proof}

It is well-known that $\ai$-algebras can be used to characterize formality of dg algebras over fields \cite{Kadeishvili:1982}. We record the following generalization in local algebra.

\begin{proposition}\label{p_formality}
Let $\phi\colon Q\to R$ be a finite local homomorphism and let $\epsilon\colon A \to R$ be the minimal $Q$-free resolution, equipped with an $\ai$-structure making $\epsilon$ a strict quasi-isomorphism of $\ai$-algebras. The $\ai$-algebra $A\otimes_Qk$ is quasi-isomorphic, as an $\ai$-algebra, to the derived fiber $R\lotimes_Q k$ defined as a dg $k$-algebra in \cref{sec:Koszul}. Moreover, $R\lotimes_Q k$ is formal as a dg $k$-algebra if and only if $A$ admits an $\ai$-structure $\{m_n\}$ as above that also satisfies $m_n\otimes_Qk =0$ for $n\geqslant 3$. 
\end{proposition}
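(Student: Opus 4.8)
The plan is to leverage the two characterizations of formality sitting on either side of the equivalence: on one side, \cref{p_Ainfinity_lifting} lets me move $\ai$-algebra maps across surjective strict quasi-isomorphisms, and on the other, a minimal $\ai$-algebra (one with $m_1=0$ after $\otimes_Q k$) has homology equal to itself, so vanishing of the higher $m_n$ over $k$ is exactly the statement that $A\otimes_Q k$ is \emph{formal as an $\ai$-algebra}. The first step is to verify the ``moreover''-free part: that $A\otimes_Q k$ is quasi-isomorphic as an $\ai$-algebra to $R\lotimes_Q k$. For this I take any dg algebra resolution $B\to R$ over $Q$; since $A$ is the minimal free resolution there is a comparison map $A\to B$ of complexes, and by the transfer and lifting machinery (\cref{ai-along-qi}, \cref{p_Ainfinity_lifting}) one can arrange an $\ai$-quasi-isomorphism between $A$ and $B$ covering the identity on $R$. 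Tensoring with $k$ (which is exact on the free complexes $A$ and $B$) gives an $\ai$-quasi-isomorphism $A\otimes_Q k\simeq B\otimes_Q k= R\lotimes_Q k$; since the latter is a genuine dg $k$-algebra, this identifies the two objects at the $\ai$-level, and in particular formality of one as a dg $k$-algebra is equivalent to formality of the other as an $\ai$-algebra (over a field, $\ai$-quasi-isomorphism is an equivalence relation and dg-formality coincides with $\ai$-formality by Kadeishvili).

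Next comes the main equivalence. Suppose first that $A$ carries an $\ai$-structure with $\epsilon$ strict and $m_n\otimes_Q k=0$ for $n\geqslant 3$. Since $A$ is the \emph{minimal} resolution, $m_1\otimes_Q k=\partial\otimes_Q k=0$ as well, so $A\otimes_Q k$ has $m_n=0$ for all $n\neq 2$; it is its own homology with the induced product, hence formal as an $\ai$-algebra, hence formal as a dg $k$-algebra by the previous paragraph. Conversely, suppose $R\lotimes_Q k$ is formal as a dg $k$-algebra. Fix any $\ai$-structure on $A$ with $\epsilon\colon A\to R$ strict (this exists by \cref{ai-along-qi}). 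By the first paragraph, $A\otimes_Q k$ is $\ai$-quasi-isomorphic to $R\lotimes_Q k\simeq \H(R\lotimes_Q k)=\Tor Q R k$, so $A\otimes_Q k$ is formal as an $\ai$-algebra. Since $A$ is minimal, $m_1\otimes_Q k=0$, so $A\otimes_Q k$ is a minimal $\ai$-algebra with $\H(A\otimes_Q k)=A\otimes_Q k$ as a module; by the homotopy-invariance of minimal models over a field (Kadeishvili), any minimal $\ai$-algebra that is formal must in fact have $m_n=0$ for $n\geqslant 3$ already, i.e.\ the minimal model of its homology \emph{is} its homology with zero higher products, and any two minimal models are $\ai$-isomorphic. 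So there is a strict-unital $\ai$-isomorphism $f\colon (A\otimes_Q k, \{m_n\})\xrightarrow{\ \sim\ }(\Tor Q R k,\mu,0)$.

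The remaining step — which I expect to be the real obstacle — is to \emph{lift} this $k$-linear correction $f$ to a change of $\ai$-structure on $A$ itself over $Q$, so that the new structure still has $\epsilon$ strict and now has $m_n\otimes_Q k=0$ for $n\geqslant3$. The idea is: an $\ai$-isomorphism $f$ of $A\otimes_Q k$ is in particular an automorphism of the underlying graded $k$-module fixing the unit; lift $f_1$ to a $Q$-linear automorphism $\tilde f_1$ of $A$ fixing the split unit (possible since $A$ is free and $\tilde f_1\otimes_Q k = f_1$ is invertible, so $\tilde f_1$ is invertible by Nakayama), and lift each higher component $f_n$ to a $Q$-linear map $\tilde f_n\colon A^{\otimes n}\to A$ of the right degree, vanishing on inputs involving the unit. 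Transporting the $\ai$-structure $\{m_n\}$ along the (invertible, strict-unital) $\ai$-morphism $\tilde f=\{\tilde f_n\}$ produces a new $\ai$-structure $\{m_n'\}$ on $A$; one checks it is still strictly unital and, because $\epsilon$ factored through the minimal resolution, can be arranged to keep $\epsilon$ strict (or else compose with a further lift supplied by \cref{p_Ainfinity_lifting} to restore strictness while not disturbing the reduction mod $\fm_Q$). By naturality of transport under $\otimes_Q k$, $\{m_n'\otimes_Q k\}$ is the $\ai$-structure on $A\otimes_Q k$ obtained by transporting $\{m_n\}$ along $f$, which by construction of $f$ is exactly $(\mu, 0)$; hence $m_n'\otimes_Q k=0$ for $n\neq 2$ and in particular for $n\geqslant 3$. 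This completes the converse. The delicate point throughout is bookkeeping of strict unitality and the compatibility with $\epsilon$ under these lifts and transports; the actual Stasheff-identity verifications for transported structures are standard and I would cite \cite{LefevreHasegawa:2003} or \cite{Burke:2015,Burke:2018} rather than redo them.
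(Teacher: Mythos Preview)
Your proposal is correct and follows essentially the same route as the paper: compare $A$ to a dg algebra resolution via \cref{p_Ainfinity_lifting} to get the first claim, use minimality of $A$ plus Kadeishvili for the easy direction, and for the converse invoke uniqueness of minimal models over $k$ to obtain an $\ai$-isomorphism between $(A\otimes_Qk,\{m_n\otimes k\})$ and $(\Tor QRk,\mu,0)$, then lift its components to $Q$ (using Nakayama for invertibility of the linear part) and transport the $\ai$-structure. The paper does exactly this, writing down the transported operations by an explicit inductive formula rather than appealing to an abstract ``transport along an $\ai$-isomorphism''; your remark that the compatibility with $\epsilon$ and strict unitality is the delicate bookkeeping point is apt, and the paper is equally terse on that front.
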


\begin{proof}
Suppose that the minimal $Q$-free resolution $A$ of $R$ has an $\ai$-structure $\{m_n\}$ with the stated property. If $A'$ is a $Q$-free dg algebra resolution of $R$, then $A$ and $A'$ are quasi-isomorphic as $\ai$-algebras over $Q$ by \cref{p_Ainfinity_lifting}. Therefore $A\otimes_Qk$ and $R\lotimes_Qk=A'\otimes_Qk$ are quasi-isomorphic as $\ai$-algebras over $k$.

If $m_n\otimes_Qk =0$ for $n\geqslant 3$ then $A\otimes_Qk$ is a graded algebra, canonically isomorphic to $\Tor QRk$. Two dg $k$-algebras are quasi-isomorphic as dg algebras if and only if they are quasi-isomorphic as $\ai$-algebras \cite{Kadeishvili:1982}, and we can conclude that $R\lotimes_Qk$ is formal.

Suppose conversely that $R\lotimes_Q k$ is formal. By \cref{ai-along-qi} the minimal $Q$-free resolution $A$ of $R$ admits an $\ai$-structure $\{m_n'\}$. Using the same reasoning as above, since $R\lotimes_Q k$ is formal $A\otimes_Qk$ and $\Tor QRk$ are quasi-isomorphic as $\ai$-algebras over $k$. By the uniqueness of minimal models (that is, $\ai$-algebras over a field having zero differential; see \cite{Kadeishvili:1982}) there is an isomorphism of $\ai$-algebras 
\[
\psi\colon (\Tor QRk, 0, \mu, 0, \ldots) \xra{\cong} (A \otimes_Q k, 0, m_2' \otimes k, m_3' \otimes k, \ldots)\,,
\]
where $\mu$ is the ordinary product on $\Tor QRk$. We may make the identification $A \otimes_Q k=\Tor QRk$ and choose lifts $\Psi_i\colon A^{\otimes i} \to A$ with $\Psi_i\otimes_Qk=\psi_i$. By Nakayama's lemma $\Psi_1$ is an isomorphism and we can inductively define operations $m_n\colon A^{\otimes n} \to A$ by the formula $m_n\coloneqq $
\[
\Psi_1^{-1}\Big(-\sum_{\substack{r+s+t=n\\r,t \geqslant 0, s \geqslant 1}} (-1)^{r+st} {\Psi}_{r+1+t} \left(\id^{\otimes r} \otimes {m}_s \otimes \id^{\otimes t}\right) + \sum_{\substack{p,\bmalpha \in \BN^p\\|\bmalpha|=n}} (-1)^{v(\bmalpha)} {m}'_p {\bf\Psi}^{\otimes \bmalpha}\Big)\,.
\]
By construction the map $\Psi\colon (A,\{m_n'\}) \to (A,\{m_n\})$ now satisfies the Stasheff morphism identities \eqref{eq:stasheff-alg-mor}, and it follows that $(A,\{m_n\})$ is an $\ai$-algebra, isomorphic to $(A,\{m_n'\})$. Finally, from $\Psi\otimes_Q k=\psi$ it follows that $m_2\otimes_Qk=\mu$ and $m_n\otimes_Qk =0$ for $n\geqslant 3$, as stated in the proposition.
\end{proof}

The following technical lemma will be used later to help generate examples, by showing that certain $\ai$-operations are minimal.

\begin{lemma}\label{l_minimal_A_infty_map}
Let $\phi\colon A\to T$ be a map of connective split unital $\ai$-algebras, where $T$ is a trivial algebra. If for some $N$ the map $(\phi_1)_{< N} \colon A_{< N}\to T_{< N}$ is injective, then the $\ai$-structure of $A$ vanishes in degrees less than $N$, in the sense that $(\bar{m}_n(\bar{A}^{\otimes n}))_i=0$ for all $n\geqslant 1$ and $i<N$.
\end{lemma}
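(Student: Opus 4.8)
The plan is a simultaneous induction on the arity $n$ and the homological degree, driven by the Stasheff identity \eqref{eq:stasheff-alg-mor} for the morphism $\phi$. Write $\mathrm{P}(n)$ for the assertion that $(\bar m_n(\bar A^{\otimes n}))_i=0$ for all $i<N$. The two levers are: since $T$ is trivial, $m_1^T=0$ and $m_p^T$ vanishes on tensor powers of $\bar T$; and $(\phi_1)_{<N}$ is injective. I would also reduce at the outset to the case $\bar A_0=0$ (equivalently $A_0=Q$), which holds in all of our applications, where $A$ is a minimal free resolution; in that case the split‑unit curvature maps $h^A_1\colon\bar A\to Q$ and $h^A_2\colon\bar A^{\otimes2}\to Q$, of homological degrees $-1$ and $0$, vanish on $\bar A$ in every degree that will come up below, which is what keeps the bookkeeping transparent.

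For the inductive step, fix $n\geqslant1$, assume $\mathrm{P}(s)$ for all $s<n$, and take homogeneous $a_1,\dots,a_n\in\bar A$ with $J\coloneqq\sum_i|a_i|$ and $J+n-2<N$; the goal is $\bar m_n(a_1\otimes\cdots\otimes a_n)=0$. Evaluate \eqref{eq:stasheff-alg-mor} at arity $n$ on $a_1\otimes\cdots\otimes a_n$. On the right‑hand side the $p=1$ summand is $m_1^T\phi_n=0$, and by connectivity each $\phi_{\alpha_i}$ carries $\bar A$‑inputs into $\bar T$, so every $p\geqslant2$ summand is a value of $m_p^T$ on $\bar T$ and vanishes; hence the right‑hand side is $0$. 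On the left‑hand side, isolate the summand with $r=t=0$, $s=n$, namely $\phi_1\bigl(m_n(a_1\otimes\cdots\otimes a_n)\bigr)$. Every other summand has $\id^{\otimes r}\otimes m_s\otimes\id^{\otimes t}$ with $1\leqslant s\leqslant n-1$ and $r+1+t\geqslant2$; splitting $m_s$ on $\bar A$‑inputs into $\bar m_s$ plus its unit component, the unit component inserts a $1_A$ into $\phi_{r+1+t}$ with $r+1+t\geqslant2$ and dies by strict unitality \eqref{eq:unital-alg-mor}, while $\bar m_s$ is applied to a sub‑tuple of total degree $\leqslant J$ and so outputs an element of degree $\leqslant J+s-2<J+n-2<N$, which vanishes by $\mathrm{P}(s)$ when $s\geqslant2$, and lies in $\bar A$ in degree $<N$ and vanishes by $\mathrm{P}(1)$ when $s=1$. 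Thus \eqref{eq:stasheff-alg-mor} collapses to $\phi_1\bigl(m_n(a_1\otimes\cdots\otimes a_n)\bigr)=0$. Since $\bar A_0=0$ we have $m_n=\bar m_n$ on $\bar A^{\otimes n}$ (no curvature to separate), so $\bar m_n(a_1\otimes\cdots\otimes a_n)$ is an element of $A_{<N}$ killed by $(\phi_1)_{<N}$, hence zero; this is $\mathrm{P}(n)$. The base case $\mathrm{P}(1)$ is the same computation with $n=1$: here \eqref{eq:stasheff-alg-mor} reads $\phi_1m_1=m_1^T\phi_1=0$, and since $\bar m_1$ vanishes on $\bar A_1$ for degree reasons while $h^A_1$ vanishes elsewhere, this forces $\bar m_1$ to vanish on $A_{<N}$ by injectivity.

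The main obstacle — really the only delicate point — is the bookkeeping of the split unit and curvature in \eqref{eq:stasheff-alg-mor}: one must check that each occurrence of an inner operation $m_s$ whose unit component is nonzero is absorbed, either by strict unitality of the enclosing $\phi_{r+1+t}$ or, in the single borderline case appearing in $\mathrm{P}(1)$ and $\mathrm{P}(2)$, by a homological degree count; the reduction to $\bar A_0=Q$ is exactly what makes this clean, and dropping it would require a separate and slightly fussy treatment of the degree‑zero components $(\bar m_n(\bar A^{\otimes n}))_0$, which is not needed for any application in the paper.
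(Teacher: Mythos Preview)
Your approach is essentially the same as the paper's: induct on $n$, evaluate the Stasheff morphism identity \eqref{eq:stasheff-alg-mor} on $\bar A^{\otimes n}$, use triviality of $T$ to kill one side, use the inductive hypothesis to kill the inner $\bar m_s$ terms by a degree count, and conclude by injectivity of $\phi_1$ in degrees $<N$. Your degree bound $\leq J+s-2<J+n-2<N$ is equivalent to the paper's observation that $\phi_{r+1+t}$ raises degree by $r+t$, forcing the inner factor $\bar m_s(\ldots)$ to sit in degree $<N$.

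The one point where you diverge is the reduction to $\bar A_0=0$. This is not an assumption the paper makes, and it is not needed. The paper sidesteps all of your curvature bookkeeping in one move: it works throughout with the \emph{reduced} maps $\bar\phi_r\colon\bar A^{\otimes r}\to\bar T$ (the composite of $\phi_r$ with the projection $T\to\bar T$) and the reduced Stasheff morphism identity
\[
\bar\phi_1\bar m_n=-\sum_{\substack{r+s+t=n\\(r,s,t)\neq(0,n,0)}}(-1)^{r+st}\,\bar\phi_{r+1+t}\bigl(\id^{\otimes r}\otimes\bar m_s\otimes\id^{\otimes t}\bigr),
\]
valid because $\bar m_s^T=0$. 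This already lives entirely in $\bar A$ and $\bar T$, so no unit insertions or $h_1,h_2$ terms ever appear, and one only needs that $\bar\phi_1$ is injective on $\bar A_{<N}$---which follows from injectivity of $(\phi_1)_{<N}$ since $\phi_1(1_A)=1_T$ forces $\phi_1(\bar A)\cap Q\cdot 1_T=0$. So your last paragraph overstates the difficulty: rather than a ``slightly fussy treatment of the degree-zero components,'' passing to the barred identity handles the general case with no extra work.
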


\begin{proof}
We prove this by induction on $n$. It is clear for $n=1$ since $\phi_1$ is a chain map. For $n\geqslant 2$, since $\bar{m}_s^{T}=0$ for all $s$, we can rearrange the Stasheff morphism identities \eqref{eq:stasheff-alg-mor}: 
\[
\bar{\phi}_1\bar{m}_n=-\sum_{\substack{r+s+t=n\\r,t \geqslant 0, s \geqslant 1}} (-1)^{r+st} \bar{\phi}_{r+1+t} \left(\id^{\otimes r} \otimes \bar{m}_s \otimes \id^{\otimes t}\right)\,.
\] 
We can assume by induction that $(\bar{m}_s(\bar{A}^{\otimes s}))_{<N}=0$ for $s<n$. Since each $\phi_r$ increases degree by $r-1$, this implies that the the right-hand side above is zero in degrees $i<N$. Since $(\bar{\phi}_1)_{<N}$ is injective, it follows that $\bar{m}_n(\bar{A}^{\otimes n}))_{<N}=0$.
\end{proof}

\subsection{Cyclic \texorpdfstring{$\ai$}{A-infinity}-algebras} \label{cyclic_ai_algebra}

For Gorenstein algebras the minimal resolution satisfies a Poincar\'e duality property that allows us, in favorable situations, to construct $\ai$-resolutions with additional duality properties.

A \emph{cyclic $\ai$-algebra} of degree $d$ over $Q$ is a complex $A$ of finitely generated free $Q$-modules with a perfect, $Q$-bilinear pairing
\begin{equation*}
\label{e_pairing}
\langle-,-\rangle\colon A\otimes A \to \susp^d Q\,,
\end{equation*}
and an $\ai$-structure $\{m_n\}$ on $A$ such that for each $n$
\[
\langle m_n(a_1,\ldots ,a_n),a_{n+1}\rangle = (-1)^{n+|a_1|(|a_2|+\cdots +|a_{n+1}|)}\langle m_n(a_{2},\ldots ,a_{n+1}),a_{1}\rangle\,;
\]
see \cite{Kontsevich:1994}.

There is for each $n$ an isomorphism of complexes
\[
\operatorname{cyc}\colon \Hom{}{A^{\otimes n}}{A}\xra{\cong} \Hom{}{A^{\otimes (n+1)}}{\susp^dQ}\,, \quad \operatorname{cyc}(f)= \langle f(-),-\rangle\,.
\]
We give $A^{\otimes (n+1)}$ the action of the cyclic group $C_{n+1}=\langle c\rangle $ with generator acting by $c\cdot (a_1\otimes \cdots \otimes a_{n+1}) = (-1)^{|a_1|(|a_2|+\cdots +|a_{n+1}|)} (a_2\otimes \cdots \otimes a_{n+1}\otimes a_1)$. From this perspective, an $\ai$-structure $\{m_n\}$ is cyclic if and only if 
\[
\operatorname{cyc}(m_n)\cdot c = (-1)^n\operatorname{cyc}(m_n) \quad\text{for all }n\,.
\]

Let $\phi\colon Q\to R$ be a surjective local Gorenstein homomorphism of projective dimension $d$, and let $A$ be the minimal resolution of $R$ over $Q$. Let $\mu\colon A^{\otimes 2}\to A$ be a chain map lifting the product on $R$; we can assume that $\mu$ is unital and graded-commutative by \cite[3.4.3]{Bruns/Herzog:1998}. The Gorenstein condition \cref{eq_gor_def} guarantees that $A\otimes_Q k=\Tor QRk$ is a Poincar\'e duality algebra with the product induced from $\mu$; see \cite[Theorem]{Avramov/Golod:1971}. It follows from Nakayama's lemma that $A_d\cong Q$ and we obtain a perfect pairing
\begin{equation}\label{eq_gorenstein_pairing}
    \langle-,-\rangle\colon A\otimes A \xrightarrow{\mu} A\twoheadrightarrow \susp^d A_d=\susp^d Q\,.
\end{equation}

\begin{theorem}\label{p_cyclic_ai}
Let $Q\to R$ be a surjective local Gorenstein homomorphism of odd projective dimension $d$. Assume that $Q$ contains a field of characteristic zero. The minimal resolution $A$ of $R$ over $Q$ admits the structure of a split unital, cyclic $\ai$-algebra of degree $d$, making the map $A\to R$ a strict $\ai$-algebra quasi-isomorphism.
\end{theorem}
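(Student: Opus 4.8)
The plan is to construct the higher operations $m_n$ for $n\geqslant 3$ by obstruction theory, following the proof of \cref{ai-along-qi}, but carrying cyclic symmetry along at every stage; the hypotheses that $Q$ contains a field of characteristic zero and that $d$ is odd enter precisely to supply an averaging operator over the cyclic groups that is compatible with the relevant differentials. First I would fix the minimal $Q$-free resolution $\epsilon\colon A\to R$ together with a unital, graded-commutative chain-level product $\mu=m_2$ lifting the multiplication of $R$, as in the discussion before \eqref{eq_gorenstein_pairing}; since $Q$ is regular and $R$ is Gorenstein of projective dimension $d$ one has $A_d\cong Q$ and \eqref{eq_gorenstein_pairing} produces the perfect degree-$d$ pairing $\langle-,-\rangle\colon A\otimes A\to\susp^dQ$, which from now on is fixed. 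By \cref{ai-along-qi} there is already a split unital $\ai$-structure on $A$ extending $\mu$ for which $\epsilon$ is strict; the task is to arrange, in addition, that $\operatorname{cyc}(m_n)\cdot c=(-1)^n\operatorname{cyc}(m_n)$ for every $n$.

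Next I would translate the problem into Kontsevich's symplectic picture \cite{Kontsevich:1994}: via the isomorphisms $\operatorname{cyc}\colon\Hom{}{A^{\otimes n}}{A}\xra{\cong}\Hom{}{A^{\otimes(n+1)}}{\susp^dQ}$ the whole $\ai$-structure is recorded in the complex $\mathfrak g=\bigoplus_{n\geqslant 1}\Hom{}{A^{\otimes(n+1)}}{\susp^dQ}$ with differential assembled from $\partial$ and $\mu$, the Stasheff identities become the vanishing of the self-bracket of the cyclic potential, and the $m_n$ giving a cyclic structure are exactly those landing in the ``cyclic'' subspace $\mathfrak g^{\mathrm{cyc}}$ on which $C_{n+1}$ acts through the character $c\mapsto(-1)^n$. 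Since $\mathbb{Q}\subseteq Q$, for each $n$ the operator $e_n=\frac{1}{n+1}\sum_{i=0}^{n}(-1)^{ni}c^i$ is a well-defined $Q$-linear idempotent with image the $n$-th piece of $\mathfrak g^{\mathrm{cyc}}$; the role of ``$d$ odd'' is that $e_n$ then commutes with the differential of $\mathfrak g$, a sign computation on $A^{\otimes\bullet}$ that balances the Koszul signs of the cyclic operator against the degree $d$ of the pairing and that fails for $d$ even. Hence $\mathfrak g^{\mathrm{cyc}}$ is a $Q$-direct summand of $\mathfrak g$ closed under the differential. For the base of the induction, $m_1=\partial$ already lies in $\mathfrak g^{\mathrm{cyc}}$ because $\partial$ is a derivation for $\langle-,-\rangle$, and one then adjusts $\mu$ so that $m_2$ becomes cyclic without destroying strictness of $\epsilon$: modulo $\fm_Q$ the product and the pairing reduce to the associative product and the Poincar\'e duality pairing on $\Tor QRk$ \cite{Avramov/Golod:1971}, whose trace form is already cyclic, so the required correction is governed by an obstruction compatible with $\epsilon$.

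Then I would run the induction. Assume $m_1,\dots,m_{n-1}$ are chosen, split unital, with each $\operatorname{cyc}(m_i)\in\mathfrak g^{\mathrm{cyc}}$ and $\epsilon m_i=m_i^R\epsilon^{\otimes i}$; since $R$ is an ordinary algebra this forces $\epsilon m_i=0$ for $i\geqslant 3$. The obstruction $\obs{A}{n}$ of \eqref{eq_obstruction} is a chain map that vanishes on tensor words containing the unit, hence factors through $\bar{A}^{\otimes n}$, and the argument of \cref{ai-along-qi} --- using that $\epsilon$ is a surjective quasi-isomorphism, so that $\Hom{}{\bar{A}^{\otimes n}}{A}\to\Hom{}{\bar{A}^{\otimes n}}{R}$ is a quasi-isomorphism and the image of $\obs{A}{n}$ in the target vanishes --- shows $\obs{A}{n}$ is a boundary in $\mathfrak g$. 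A sign count shows $\operatorname{cyc}(\obs{A}{n})\in\mathfrak g^{\mathrm{cyc}}$, because each summand $m_{r+1+t}(\id^{\otimes r}\otimes m_s\otimes\id^{\otimes t})$ is built from cyclic operations and the (anti)symmetric pairing --- equivalently, in the symplectic language it is the degree-$n$ component of the self-bracket of the cyclic potential, which is automatically cyclic. Since $\mathfrak g^{\mathrm{cyc}}$ is a direct summand closed under the differential and $e_n$ fixes $\obs{A}{n}$, replacing any primitive $\xi$ by $e_n\xi$ yields a primitive inside $\mathfrak g^{\mathrm{cyc}}$; one then sets $m_n\coloneqq\operatorname{cyc}^{-1}(e_n\xi)$, extends it by zero on words containing the unit and adjusts $h_1,h_2$ as in \cref{ai-alg-split-unit} --- all of which $e_n$ respects, since a cyclic permutation of a word containing the unit still contains it --- obtaining a split unital, cyclic $m_n$ with $\epsilon m_n=0$. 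This completes the induction, and $\epsilon\colon A\to R$ is then a strict $\ai$-quasi-isomorphism by construction.

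The main obstacle is the sign bookkeeping behind the two claims on which everything rests: that $\mathfrak g^{\mathrm{cyc}}$ is closed under the differential of $\mathfrak g$ precisely when $d$ is odd, and that $\operatorname{cyc}(\obs{A}{n})$ inherits cyclic symmetry from $m_1,\dots,m_{n-1}$. These are the two points at which the hypotheses ``$\operatorname{char}=0$'' and ``$d$ odd'' are genuinely used, and both come down to a careful, though mechanical, analysis of the cyclic $C_{n+1}$-action on $A^{\otimes(n+1)}$ twisted by the degree-$d$ pairing; carrying the split unit (the curvature terms $h_1,h_2$) along at the same time, and checking that the base case $m_2$ can be made cyclic compatibly with strictness of $\epsilon$, add length but no essential new difficulty.
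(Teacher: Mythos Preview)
Your overall shape---obstruction theory plus cyclic averaging---matches the paper, but two specific points go wrong. First, your explanation of where ``$d$ odd'' enters is incorrect: the Koszul-signed $C_{n+1}$-action on $A^{\otimes(n+1)}$ commutes with the tensor differential regardless of $d$, so the idempotent $e_n$ is always a chain map on $\Hom(A^{\otimes(n+1)},\susp^dQ)$. In the paper, oddness of $d$ is used for a completely different reason (see below). Second, and more seriously, your treatment of strict unitality after averaging does not work. The primitive $\tilde m_n$ produced by the argument of \cref{ai-along-qi} vanishes when one of its $n$ \emph{inputs} is $1$, so $\operatorname{cyc}(\tilde m_n)$ vanishes when the unit sits in slots $1,\ldots,n$---but not in slot $n+1$, since $\langle \tilde m_n(a_1,\ldots,a_n),1\rangle$ is the degree-$d$ component of the output. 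Averaging over $C_{n+1}$ mixes the slots, so $e_n\operatorname{cyc}(\tilde m_n)$ generally fails to vanish on any unit-containing word, and $\operatorname{cyc}^{-1}(e_n\operatorname{cyc}(\tilde m_n))$ is no longer strictly unital. Restricting afterward to $\bar A^{\otimes(n+1)}$ does not help, because $\bar A\subseteq A$ is not a subcomplex (the map $A_1\to A_0=Q$ is nonzero), so restriction is not a chain map.

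The paper circumvents exactly this difficulty by passing to $V=A_{<d}/A_0$, which strips off both the unit and the top degree so that the cyclic averaging and the ``unital'' constraint are simultaneously built in. On $V$ one can no longer use the quasi-isomorphism $A\to R$ to kill obstructions; instead the paper proves a separate acyclicity statement (\cref{l_V_homology}) showing $\Hom(V^{\otimes n},V)$ has no homology in degree $n-3$ provided $n-3\neq d-n-1$, and this is where $d$ odd is used. The operations $m_n^V$ are built there (with a modified second Stasheff identity carrying a curvature term), symmetrized with the same idempotent $e_n$, and only at the end extended back to $A$ by explicit formulas in each degree; the verification that these extended operations satisfy the Stasheff identities in degree $d$ uses the cyclic symmetry of $m_{n-1}^V$. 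So the key idea you are missing is this reduction to $V$, and the accompanying homology computation that replaces the appeal to \cref{ai-along-qi}.
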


We first need a lemma about projective resolutions.

\begin{lemma}\label{l_V_homology}
Let $M$ be a finitely generated $Q$-module of projective dimension $d>0$, with a projective resolution $A\to M$, and set $V=A_{<d}/A_0$. Then for any $n$ we have $\H_i( V^{\otimes n+1})=0$ whenever $i>n(d-1)+1$ and $i\neq (n+1)(d-1)$. 
\end{lemma}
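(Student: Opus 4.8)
The plan is to compute with the Künneth spectral sequence, using that $A_{<d}$ controls the homology of $A$ in low degrees. First I would observe that $V=A_{<d}/A_0$ is a bounded complex of projective $Q$-modules concentrated in homological degrees $1,\ldots,d-1$, so $V^{\otimes(n+1)}$ is a bounded complex of projectives concentrated in degrees $n+1,\ldots,(n+1)(d-1)$; in particular $\H_i(V^{\otimes(n+1)})=0$ automatically for $i>(n+1)(d-1)$, so the real content is the vanishing for $n(d-1)+1<i<(n+1)(d-1)$. Next I would identify $\H(V)$. Since $A\to M$ is a projective resolution with $\H_0(A)=M$ and $\pdim_Q M=d$, truncating to $A_{<d}$ and killing $A_0$ gives a complex whose homology is $\H_i(V)=\H_i(A)=0$ for $0<i<d-1$, while $\H_{d-1}(V)=\ker(A_{d-1}\to A_{d-2})=\operatorname{im}(A_d\to A_{d-1})$, which is the $(d-1)$st syzygy $\Omega^{d-1}M$ up to the shift — in any case $\H(V)$ is concentrated in a single degree $d-1$ and equals a $Q$-module, call it $N$, of projective dimension $1$ (its projective resolution is $0\to A_d\to A_{d-1}\cap\ker\to 0$; more precisely $N=\operatorname{coker}(A_{d+1}\to A_d)$ has $\pdim_Q N\le 1$ since $A_{\ge d}$ furnishes a length-one resolution). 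So up to the shift $\susp^{d-1}$, $V$ is a complex with homology $N$ concentrated in degree $0$, and $\pdim_Q N\le 1$.

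Now I would apply the Künneth (or iterated bar/Tor) spectral sequence for $V^{\otimes(n+1)}=V\otimes_Q\cdots\otimes_Q V$. Because each $V$ has homology only in degree $d-1$, the $E^2$ page is built entirely from $\operatorname{Tor}^Q_*$ of $(n+1)$ copies of $N$; concretely, $\H_*(V^{\otimes(n+1)})$ has a filtration whose associated graded is $\susp^{(n+1)(d-1)}\operatorname{Tor}^Q_*(N,\ldots,N)$, contributing to total homological degree $(n+1)(d-1)-j$ in the piece coming from $\operatorname{Tor}^Q_j$. Since $\pdim_Q N\le 1$, each iterated $\operatorname{Tor}$ of the $N$'s vanishes in homological degree $\ge 2$: indeed $\operatorname{Tor}^Q_j$ of a product of modules each of projective dimension $\le 1$ is concentrated in degrees $0$ and $1$ (tensoring length-one projective resolutions and taking homology only produces $\operatorname{Tor}$ in degrees $0$ and $1$, and one can iterate two factors at a time). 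Therefore the only possible nonzero contributions to $\H_i(V^{\otimes(n+1)})$ are in total degrees $i=(n+1)(d-1)$ (from $\operatorname{Tor}_0$) and $i=(n+1)(d-1)-1=n(d-1)+d-2$ (from $\operatorname{Tor}_1$), plus whatever survives below; but all the lower $\operatorname{Tor}_j$ with $j\ge 2$ vanish, so there is nothing in degrees $i<(n+1)(d-1)-1$ except possibly where differentials in the spectral sequence could... — and in fact the spectral sequence degenerates because $E^2$ is concentrated in only the rows $j=0,1$ and there is no room for differentials to lower the range further. Hence $\H_i(V^{\otimes(n+1)})=0$ for all $i<(n+1)(d-1)-1$, and the two possibly-nonzero values are $i=(n+1)(d-1)$ and $i=(n+1)(d-1)-1$. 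Since $(n+1)(d-1)-1 = n(d-1)+(d-2)\ge n(d-1)+1$ exactly when $d\ge 3$, this is slightly stronger than — hence implies — the stated claim that $\H_i(V^{\otimes(n+1)})=0$ for $i>n(d-1)+1$ and $i\neq(n+1)(d-1)$. (When $d\le 2$ the statement is vacuous or trivial: for $d=1$ there is nothing, for $d=2$ one has $n(d-1)+1=n+1$ and $(n+1)(d-1)=n+1$ so there is nothing to prove.)

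The step I expect to be the main obstacle is the bookkeeping that iterated $\operatorname{Tor}$ of projective-dimension-one modules stays concentrated in homological degrees $0$ and $1$, together with the degeneration claim: one must be careful that the Künneth spectral sequence for an $(n+1)$-fold tensor product really does have $E^2$ supported only in the two rows coming from $\operatorname{Tor}_0$ and $\operatorname{Tor}_1$, and that no higher differential can push a class out of the claimed range. I would handle this by induction on $n$, writing $V^{\otimes(n+1)}=V^{\otimes n}\otimes_Q V$ and applying the two-factor Künneth spectral sequence $E^2_{p,q}=\bigoplus\operatorname{Tor}^Q_p(\H_*(V^{\otimes n}),\H_*(V))_q\Rightarrow\H_{p+q}(V^{\otimes(n+1)})$; by the inductive hypothesis $\H_*(V^{\otimes n})$ lives in degrees $n(d-1)-1$ and $n(d-1)$ and is a module of projective dimension $\le 1$ in each of those spots (this needs a small additional argument, that the degree-$(n(d-1)-1)$ homology also has projective dimension $\le 1$ — which follows since it is a $\operatorname{Tor}_1$ of pd-$\le 1$ modules, hence a submodule of a free module... or more simply since everything in sight is built from $A_{\ge d}$ which is a complex of projectives of length one past degree $d$). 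Pairing this against $\H_*(V)=\susp^{d-1}N$ with $\pdim_Q N\le 1$ gives $E^2$ concentrated in $p\in\{0,1\}$, and the spectral sequence collapses for degree reasons, closing the induction. Alternatively, one can avoid spectral sequences entirely by replacing $V$ up to quasi-isomorphism with its minimal $N$-model $0\to A_d\to \operatorname{Z}_{d-1}(A)\to 0$ (a two-term complex of projectives), tensoring $n+1$ of these two-term complexes together, and directly reading off that the resulting total complex of projectives has homology only in the top one or two degrees; this is the cleaner route and the one I would ultimately write up.
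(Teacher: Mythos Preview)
There is a genuine gap in your computation of $\H(V)$. You claim that $\H_i(V)=\H_i(A)=0$ for $0<i<d-1$, and hence that $\H(V)$ is concentrated in the single degree $d-1$. But quotienting by $A_0$ changes the homology in degree~$1$:
\[
\H_1(V)=A_1/\partial(A_2)=A_1/\ker(\partial_1)\cong\operatorname{im}(\partial_1)=\ker(A_0\to M),
\]
which is the first syzygy of $M$, nonzero whenever $d\geq 1$ and sitting in a degree distinct from $d-1$ whenever $d\geq 3$. So $\H(V)$ is concentrated in the \emph{two} degrees $1$ and $d-1$, not one.

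This breaks the rest of your argument. Your K\"unneth analysis would place $\H(V^{\otimes(n+1)})$ only in the top two degrees $(n+1)(d-1)$ and $(n+1)(d-1)-1$; that stronger conclusion is simply false (for $d=3$, $n=1$, the class $\H_1(V)\otimes\H_1(V)$ contributes to $\H_2(V^{\otimes 2})$). Your ``cleaner route'' at the end fails for the same reason: $V$ is not quasi-isomorphic to a two-term complex supported near degree $d-1$, because of the extra homology in degree~$1$. Moreover, the degree-$1$ homology $N=\ker(A_0\to M)$ has $\pdim_Q N=d-1$, not $\leq 1$, so the iterated-$\operatorname{Tor}$ bookkeeping you sketch cannot be salvaged by bounding projective dimensions.

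The paper's proof exploits precisely the two-degree structure you missed. Setting $N=\ker(A_0\to M)$ and $A_d\cong Q^s$, there is an exact triangle
\[
\susp^{d-1}Q^s\longrightarrow V\longrightarrow \susp N,
\]
and tensoring with $V^{\otimes n}$ gives
\[
\susp^{d-1}(V^{\otimes n})^{\oplus s}\longrightarrow V^{\otimes(n+1)}\longrightarrow \susp V^{\otimes n}\otimes N.
\]
Since $V^{\otimes n}$ is supported in degrees $\leq n(d-1)$, the right-hand term has homology vanishing above $n(d-1)+1$, so for $i>n(d-1)+1$ the long exact sequence shows $\H_i(V^{\otimes(n+1)})$ is a quotient of $\H_{i-(d-1)}(V^{\otimes n})^{\oplus s}$, and a short induction on $n$ finishes.
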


\begin{proof}
We show this by inducing on $n$. Since $A$ is a projective resolution of $R$, the homology of $V$ is concentrated in degrees $1$ and $d-1$, and there is an exact triangle
\[
\susp^{d-1} Q^s\longrightarrow V\longrightarrow \susp N\,,
\]
of complexes of $Q$-modules, where $N=\ker(A_0\to M)$ and $A_d=Q^{\oplus s}$. This justifies the case $n=0$, and for each $n\geqslant 1$ yields another exact triangle
\[
\susp^{d-1} (V^{\otimes n})^{\oplus s}\longrightarrow V^{\otimes n+1}\longrightarrow \susp V^{\otimes n}\otimes N\,.
\]
Clearly $\H_i(\susp V^{\otimes n} \otimes N) = 0$ for $i > n(d-1)+1$, so by the long exact sequence in homology the map $\H_{i}(\susp^{d-1} V^{\otimes (n-1)})^{\oplus s} \to \H_{i}(V^{\otimes n})$ is surjective for $i > n(d-1)+1$. 
By the induction hypothesis $\H_i(\susp^{d-1} V^{\otimes (n-1)})=0$ if $ i\neq n(d-1)$ and $i>(n-1)(d-1)+1$. From this we conclude that the lemma holds for $n$.
\end{proof}

\begin{proof}[Proof of \cref{p_cyclic_ai}]
Recall from \cref{eq_gorenstein_pairing} that the pairing on $A$ was defined from a unital and graded-commutative product $\mu\colon A^{\otimes 2}\to A$. This restricts to a perfect pairing on $V=A_{<d}/A_0$, and we start by constructing operations $m_n^V\colon V^{\otimes n}\to V$.

 If $|a|+|b|=d+1$ then $\mu(a\otimes b)=0$ in $A$, so
\[
 \mu(\partial(a)\otimes b)+(-1)^{|a|}\mu(a\otimes \partial(b)) =\partial (\mu(a\otimes b))=0\,.
\]
Using graded-commutativity of $\mu$, this is equivalent to the cyclic identity
\[
\langle m_1^V(a),b\rangle =(-1)^{1+|a||b|} \langle m_1^V(b),a\rangle \,,
\]
where $m_1^V\coloneqq\partial$. 

Next, we truncate $\mu$ to obtain $\mu^V\colon V^{\otimes 2}\to V$, and we define $m_2^V$ by symmetrizing $\mu^V$ with respect to the $C_3$-action:
\[
\operatorname{cyc}(m_2^V)\coloneqq\operatorname{cyc}(\mu^V)\cdot \frac{1}{3}(1+ c+ c^2)\,.
\]
The obtained $m_2^V$ satisfies the required cyclic property by construction. However, the Stasheff identity \cref{eq:stasheff-alg} does not hold for $n=2$, and instead
\begin{equation}\label{eq_SI2_twisted}
    \partial(m^V_2(a,b))- m^V_2(\partial(a),b)-(-1)^{|a|}m^V_2(a,\partial(b)) = \langle a, b\rangle \partial(\omega) \,,
\end{equation}
where $\omega\in A_d$ is the generator with $\langle \omega ,1\rangle =1$. Nonetheless, since $\langle -,-\rangle$ is a chain map, the same computation as in \cref{eq_obstruction} shows that the obstruction $\obs{V}{3}$ is a chain map, that is, a cycle in $\Hom{}{V^{\otimes 3}}{V}$.

We proceed to construct  $m_n^V$ for $n\geqslant 3$ by induction, satisfying the Stasheff identities \cref{eq:stasheff-alg} for $n\geqslant 3$, and all satisfying $\operatorname{cyc}(m_n^V)\cdot c=(-1)^n\operatorname{cyc}(m_n^V)$. The argument is similar to the proof of \cref{ai-along-qi}. If $m_i^V$ have been constructed for $i<n$ with  required cyclic symmetry, a computation shows that the obstruction $\obs{V}{n}$ from \cref{eq_obstruction} is cyclic as well:
\[
\operatorname{cyc}(\obs{V}{n})=(-1)^n\operatorname{cyc}(\obs{V}{n})\cdot c\,.
\]
Since $\Hom{}{V^{\otimes n}}{V}\cong \susp^{-nd}V^{\otimes (n+1)}$ we can use \cref{l_V_homology} with $M=R$ to conclude that
\[
\H_i\big(\Hom{}{V^{\otimes n}}{V}\big)=0\ \text{ for } i>1-n\text{ and } i\neq  d-n-1\,.
\]
Since $d$ is odd, it is impossible to have $|\obs{V}{n}|=n-3=d-n-1$, hence the complex $\Hom{}{V^{\otimes n}}{V}$ is acyclic in degree $n-3$, and the class $[\obs{V}{n}]$ vanishes. This shows that there is an operation $\tilde{m}^V_n$ in $\Hom{}{V^{\otimes n}}{V}_{n-2}$ such that $\partial(\tilde{m}^V_n)=\obs{V}{n}$, and we symmetrize this to define $m_n$:
\[
\operatorname{cyc}(m_n^V)\coloneqq\operatorname{cyc}(\tilde{m}^V_n)\cdot \textstyle{\sum\limits_{i=0}^n \frac{(-1)^{in} c^i}{n+1}}\,.
\]
By construction $m_n^V$ has the required cyclic symmetry. We note that
\[
\partial(\operatorname{cyc}(m_n^V))= \operatorname{cyc}(\obs{V}{n})\cdot \textstyle{\sum\limits_{i=0}^n \frac{(-1)^{in} c^i}{n+1}} =\operatorname{cyc}(\obs{V}{n})\,.
\]
Therefore $\partial(m_n^V)=\obs{V}{n}$ and the operations $\{m_n^V\}$ satisfy the $n$th Stasheff identity. This concludes the induction.

To finish the proof we define the following operations on $A$:
\[
m_2(a_1,a_2) \coloneqq \begin{cases}
 m^V_2(a_1,a_2)& \text{if } |a_1|,|a_2|>0 \text{ and }|a_1|+|a_2|<d,\\
 \langle a_1,a_2\rangle \omega & \text{if }|a_1|+|a_2|=d,\\
 a_1a_2& \text{if }|a_1|=0 \text{ or }|a_2|=0,
\end{cases}
\]
and for $n \geqslant 3$
\[
m_n(a_1,\ldots,a_n) \coloneqq \begin{cases}
 m^V_n(a_1,\ldots,a_n)& \text{if all } |a_i|>0 \text{ and }|a_1|+\cdots +|a_2|<d,\\
 0& \text{if }|a_1|+\cdots+|a_2|=d \text{ or any }|a_i|=0.
\end{cases}
\]
The $n=2$ Stasheff identity for $A$ is equivalent to the identity \cref{eq_SI2_twisted} above.

To verify the $n$th Stasheff identity, with $n\geqslant 3$, we need to divide into cases depending on the inputs: when any of the inputs have degree zero; when the output has degree less than $d$; and when the output has degree $d$. The first two of these cases follow easily from the Stasheff identities for $\{m_n^V\}$. To check the third case, we suppose that $|a_1|+\cdots +|a_n|+n-3=d$, and we compute
\begin{align*}
    &\sum_{r+s+t=n} (-1)^{r+st} m_{r+1+t} \left(\id^{\otimes r} \otimes m_s \otimes \id^{\otimes t}\right)(a_1,\ldots,a_n)=\\
    & (-1)^{|a_1|(n-1)+1}\langle a_1 ,m^V_{n-1}(a_2,\ldots ,a_n)\rangle + (-1)^{n-1}\langle m^V_{n-1}(a_1,\ldots ,a_{n-1}),a_n\rangle\,;
\end{align*}
and this vanishes by the cyclic symmetry condition for $m_{n-1}^V$.

It follows that $A$ is an $\ai$-algebra with the operations $\{m_n\}$. Finally, the cyclic symmetry condition on $\{m_n^V\}$ implies $A$ is a cyclic $\ai$-algebra.
\end{proof}

\begin{remark}
The construction in the proof yields a bijection between unital cyclic $\ai$-algebra structures on $A$ and nonunital cyclic $\ai$-algebra structures on $V$, but with a modified version of the second Stasheff identity in the latter case. The cyclic condition is necessary to make this correspondence work.
\end{remark}

\begin{remark}\label{rem_berglund}
In work to appear, Alexander Berglund constructs cyclic $\ai$-algebra structures in significantly more generality than \cref{p_cyclic_ai}. In particular, his argument shows that the restriction to odd $d$ is unnecessary. We note the case $d \equiv 2 \text{ modulo } 4$ can obtained by a more careful analysis of the proof of \cref{p_cyclic_ai}, but the general case seems to require more machinery.
\end{remark}

\section{Twisted tensor products} \label{sec:twisted-tensor}

Twisted tensor products are an important tool in homological algebra, especially in the construction of resolutions. In this section we develop their theory over a commutative ring, producing universal resolutions via the resolution of the diagonal that will be applied to Koszul homomorphisms in later sections. However, our method of construction is new even when the base ring $Q$ is a field. Similar results have been obtained using different methods in unpublished work of Burke \cite{Burke:notes}. We do not explicitly use the language of twisting cochains, but these objects are present implicitly; and the reader may consult \cite[Section~2.1]{Loday/Vallette:2012} for more information on twisted tensor products and twisting cochains.

Let $C$ be a connected curved differential graded (dg) coalgebra over $Q$ that is free as a graded module. For a right dg module $M$ and a left dg module $N$ over $\cobc{C}$, we construct a complex $M \totimes C \totimes N$, with a ``twisted'' differential; we call this complex a \emph{twisted tensor product}. First, we define a dg bimodule $\cobc{C} \totimes C \totimes \cobc{C}$ over $\cobc{C}$.

\begin{construction} \label{twisted-tensor-product}
Ignoring differentials for now, there is a well-known exact sequence of graded $\cobc{C}$-bimodules 
\begin{equation} \label{eq:ses-tensoralg}
0 \to \cobc{C} \otimes \susp^{-1} \bar{C} \otimes \cobc{C} \xra{\iota} \cobc{C} \otimes Q \otimes \cobc{C} \xra{\mu} \cobc{C} \to 0\,,
\end{equation}
where $\iota(x \otimes \langle c \rangle \otimes y)= \mu(x,\langle c \rangle) \otimes 1 \otimes y - x \otimes 1 \otimes \mu(\langle c \rangle,y)$ for $c \in \bar{C}$ and $x,y \in \cobc{C}$, and $\mu$ the multiplication map. 

We give $\cobc{C} \otimes \susp^{-1} \bar{C} \otimes \cobc{C}$ the unique differential $\partial^\iota$ making $\iota$ a chain map, and we set
\[
\cobc{C} \totimes C \totimes \cobc{C} \coloneqq \cone(\cobc{C} \otimes \susp^{-1} \bar{C} \otimes \cobc{C} \xra{\iota} \cobc{C} \otimes Q \otimes \cobc{C})\,.
\]
We write $\partial^\tau$ for the differential on $\cobc{C} \totimes C \totimes \cobc{C}$, this is a dg $\cobc{C}$-bimodule whose underlying graded bimodule is $\cobc{C} \otimes C \otimes \cobc{C}$, using the evident multiplication by $\cobc{C}$ on either side.

For a right dg $\cobc{C}$-module $F$ and a left dg $\cobc{C}$-module $G$, we define
\begin{equation*}
F \totimes C \totimes G \coloneqq F \otimes_{\cobc{C}} \cobc{C} \totimes C \totimes \cobc{C} \otimes_{\cobc{C}} G\,.
\end{equation*}
Explicitly its underlying graded module is $F \otimes C \otimes G$ and the differential is
\begin{equation*}
\begin{aligned}
\partial^\tau &= \partial^F \otimes \id_C \otimes \id_G + \id_F \otimes \partial^C \otimes \id_G + \id_F \otimes \id_C \otimes \partial^G \\
&\quad + \left(\mu (\id_F \otimes \shift^{-1} p) \otimes \id_C \otimes \id_G - \id_F \otimes \id_C \otimes \mu(\shift^{-1} p \otimes \id_G)\right) (\id_F \otimes \Delta \otimes \id_G)
\end{aligned}
\end{equation*}
where $p \colon C \to \bar{C}$ is the natural projection and we use $\mu$ for the right action of $\cobc{C}$ on $F$ and the left action of $\cobc{C}$ on $G$. 
\end{construction}

\begin{chunk}
\label{c_semifree}
Given a dg algebra $A$, recall that a dg $A$-module $F$ is \emph{semifree} if it admits an exhaustive filtration 
\[
0= F(-1)\subseteq F(0)\subseteq F(1)\subseteq \ldots \subseteq F
\]
where each subquotient $F(i)/F(i-1)$ is a sum of shifts of $A$. As a matter of terminology, a semifree dg $A$-bimodule is a semifree dg module over $A \otimes\op{A}$.

Every dg $A$-module $M$ admits a semifree resolution in the sense that there exists a surjective quasi-isomorphism $F\xra{\simeq} M$, with $F$ a semifree dg $A$-module. Such resolutions are unique up to homotopy; see \cite[Chapter~6]{Felix/Halperin/Thomas:2001} for this fact, as well as other details regarding semifree dg modules.
\end{chunk}

\begin{lemma}
\label{l:diagonal_resolution}
The map $\cobc{C} \totimes C \totimes \cobc{C} \longrightarrow \cobc{C}$ induced from \cref{eq:ses-tensoralg} is a semifree resolution of $\cobc{C}$ as a dg $\cobc{C}$-bimodule.
\end{lemma}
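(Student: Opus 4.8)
The plan is to check the two things a semifree resolution requires: that the map is a surjective quasi-isomorphism of dg $\cobc{C}$-bimodules, and that $\cobc{C} \totimes C \totimes \cobc{C}$ is semifree as a dg $\cobc{C}$-bimodule. Throughout write $B = \cobc{C}$. By \cref{twisted-tensor-product} the object $B \totimes C \totimes B$ is $\cone(\iota)$, whose underlying graded bimodule is $(B\otimes Q\otimes B)\oplus \susp(B\otimes\susp^{-1}\bar{C}\otimes B)$, and the map of the lemma is $(b\otimes b',\xi)\mapsto \mu(b\otimes b')$. A short computation using $\partial^B\mu = \mu\,\partial^{B\otimes B}$ together with $\mu\iota = 0$ (exactness of \cref{eq:ses-tensoralg}) shows this is a chain map, and it is surjective because $\mu$ is unital.

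For the quasi-isomorphism I would note that, once $B\otimes\susp^{-1}\bar{C}\otimes B$ carries the differential $\partial^\iota$ making $\iota$ a chain map, \cref{eq:ses-tensoralg} is a short exact sequence of dg $B$-bimodules $0 \to (B\otimes\susp^{-1}\bar{C}\otimes B,\partial^\iota) \xra{\iota} (B\otimes B,\partial) \xra{\mu} (B,\partial)\to 0$. The kernel of $\cone(\iota)\to B$ consists of the pairs $(b\otimes b',\xi)$ with $b\otimes b' \in \ker\mu = \operatorname{im}\iota$; since $\iota$ is an injective chain map, this kernel is isomorphic (in coordinates $(\eta,\xi)$ via $b\otimes b' = \iota(\eta)$) to the mapping cone of the identity map of $(B\otimes\susp^{-1}\bar{C}\otimes B,\partial^\iota)$, which is contractible and in particular acyclic. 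Hence $\cone(\iota)\to B$ is a quasi-isomorphism. (This is just the standard fact that for a short exact sequence of complexes $0\to X\to Y\to Z\to 0$ the induced map $\cone(X\to Y)\to Z$ is a quasi-isomorphism; I would either invoke it or include the one-line argument above.)

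For semifreeness I would build an explicit semifree filtration from the connectedness of $C$. Write $C = Q\oplus\bar{C}$ with $\bar{C} = \bigoplus_{j\geqslant 1}\bar{C}_j$, each $\bar{C}_j$ free of finite rank over $Q$, put $C^{(n)} = \bigoplus_{0\leqslant j\leqslant n}C_j$, and set $F_n = B\otimes C^{(n)}\otimes B \subseteq B\totimes C\totimes B$. The crucial point is that each $F_n$ is a sub-dg-$B$-bimodule: inspecting the formula for $\partial^\tau$ in \cref{twisted-tensor-product}, the terms $\partial^B\otimes\id_C\otimes\id_B$ and $\id_B\otimes\id_C\otimes\partial^B$ and the left/right $B$-actions do not touch the middle factor, while $\id_B\otimes\partial^C\otimes\id_B$ and the twisting term (which applies $\Delta$ to the middle factor and multiplies one of the resulting tensor factors into an outer copy of $B$) send $F_n$ into $F_{n-1}$ — here one uses that $\bar{C}$ is concentrated in positive degrees, so $\partial^C(\bar{C}_n)\subseteq \bar{C}_{n-1}$ and $\Delta(\bar{C}_n)\subseteq\bigoplus_{i+j=n}C_i\otimes C_j$ with the surviving factor of degree $\leqslant n-1$, and $\Delta$, $\partial^C$, $h$ vanish on $C_0=Q$ or land in $Q$. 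Thus $0 = F_{-1}\subseteq F_0\subseteq F_1\subseteq\cdots$ is an exhaustive filtration by sub-dg-bimodules with $F_0 = B\otimes Q\otimes B\cong B\otimes B$ the free rank-one dg $B$-bimodule, and for $n\geqslant 1$ the quotient $F_n/F_{n-1}\cong B\otimes\bar{C}_n\otimes B$ carries the induced differential $\partial^B\otimes\id\otimes\id + \id\otimes\id\otimes\partial^B$, hence is a finite direct sum of shifts of $B\otimes B$. Therefore $\cobc{C}\totimes C\totimes\cobc{C}$ is semifree, which completes the proof.

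I expect the one genuine subtlety to be the bookkeeping in the last paragraph — confirming that every component of $\partial^\tau$ either fixes or strictly lowers the middle $C$-degree, i.e.\ that the twisting term can never raise it — and this is precisely where connectedness of $C$ is used. The quasi-isomorphism statement, by contrast, is formal once the short exact sequence of complexes is identified.
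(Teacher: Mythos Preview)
Your proof is correct and follows essentially the same approach as the paper's: the quasi-isomorphism is obtained by comparing the cone triangle with the exact triangle coming from the short exact sequence \cref{eq:ses-tensoralg} (you do this explicitly by identifying the kernel with the cone of an identity, while the paper phrases it in derived-category language), and semifreeness comes from the fact that $C$ is free and non-negatively graded---the paper merely asserts this, whereas you spell out the filtration by $C$-degree. One small inaccuracy: the paper only assumes $C$ is free as a graded $Q$-module, not that each $\bar{C}_j$ has finite rank, but your filtration argument goes through unchanged since arbitrary direct sums of shifts of $B\otimes B$ are still permitted in the subquotients.
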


\begin{proof}
By construction, \cref{eq:ses-tensoralg} is a short exact sequence of dg $\cobc{C}$-bimodules, and so it induces an exact triangle in the derived category of dg $\cobc{C}$-bimodules. We obtain the quasi-isomorphism by comparing this triangle to the triangle associated to the cone construction for $\cobc{C} \totimes C \totimes \cobc{C}$.

The dg module $\cobc{C} \totimes C \totimes \cobc{C}$ is semifree as a dg $\cobc{C}$-bimodule since $C$ is free as a module over $Q$ and non-negatively graded.
\end{proof}

\begin{chunk} \label{setup-ring-map-ai-qi}
Let $\phi \colon Q \to R$ be a finite local homomorphism with $A\to R$ a free resolution of $R$ over $Q$. Fix an $R$-complex $M$ and a semifree resolution $\gamma\colon G \to M$ over $Q$. By \cref{ai-along-qi}, there exists a split unital $\ai$-algebra structure $\{m_n\}$ on $A$ and a strictly unital $\ai$-module structure $\{m_n^G\}$ over $A$ on $G$. Then by \cref{ai-mod-dg-mod-cobar-bar}, this induces a dg module structure over $\cobc{\bc{A}}$ on $G$. 

Suppose further that $C$ is a connected curved dg coalgebra with counit $\epsilon\colon C\to Q$, equipped with a weak equivalence of connected curved dg coalgebras $C \to \bc{A}$; cf.\@ \cref{morphism_coalgebra}. Then $R$ and $G$ each have an induced dg module structure over $\cobc{C}$. 
\end{chunk}

\begin{theorem} \label{resn-trans-qi-coalgebras}
In the setting of \cref{setup-ring-map-ai-qi}, the map
\[
R \totimes C \totimes G\longrightarrow M\quad \text{given by}\quad r\otimes c\otimes g \mapsto r \epsilon(c) \gamma(g)
\]
is a semifree resolution of $M$ over $R$. 
\end{theorem}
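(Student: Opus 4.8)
The plan is to exhibit $R\totimes C\totimes G$ as a model for a derived tensor product over $\cobc{C}$ and then reduce, by change of rings, to the tautology $R\otimes^{\mathbf{L}}_R M = M$. By \cref{twisted-tensor-product} we have $R\totimes C\totimes G = R\otimes_{\cobc{C}}(\cobc{C}\totimes C\totimes\cobc{C})\otimes_{\cobc{C}}G$. For the semifreeness half of the statement: by \cref{l:diagonal_resolution} the bimodule $\cobc{C}\totimes C\totimes\cobc{C}$ is semifree over $\cobc{C}\otimes_Q\op{\cobc{C}}$, built by iterated extensions from copies of $\cobc{C}\otimes_Q\cobc{C}$; applying $R\otimes_{\cobc{C}}-\otimes_{\cobc{C}}G$ then builds $R\totimes C\totimes G$ by iterated extensions out of copies of $R\otimes_Q G$, and $R\otimes_Q G$ is semifree over $R$ because $G$ is semifree over $Q$, so $R\totimes C\totimes G$ is semifree over $R$. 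That $r\otimes c\otimes g\mapsto r\,\epsilon(c)\,\gamma(g)$ is a morphism of $R$-complexes is a direct check from the explicit formula for $\partial^\tau$ in \cref{twisted-tensor-product} together with the counit identities $(\epsilon\otimes\id)\Delta=\id=(\id\otimes\epsilon)\Delta$, $\epsilon\partial^C=0$, and $\cobc{C}$-linearity of $\gamma$; so it remains to see that this map is a quasi-isomorphism.

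Since $\cobc{C}\totimes C\totimes\cobc{C}\to\cobc{C}$ is a semifree bimodule resolution (\cref{l:diagonal_resolution}), $\cobc{C}\totimes C\totimes\cobc{C}$ is K-flat as a left and as a right dg $\cobc{C}$-module, so $R\otimes_{\cobc{C}}(\cobc{C}\totimes C\totimes\cobc{C})\otimes_{\cobc{C}}(-)$ computes the two-sided derived tensor product; hence $R\totimes C\totimes G$ represents $R\otimes^{\mathbf{L}}_{\cobc{C}}G$, with the structure map corresponding to the canonical evaluation. Now invoke the hypotheses of \cref{setup-ring-map-ai-qi}: the weak equivalence $C\to\bc{A}$ makes $\cobc{C}\to\cobc{\bc{A}}$ a quasi-isomorphism of dg algebras (\cref{morphism_coalgebra}), and the $\cobc{C}$-module structures on $R$ and $G$ are restricted from $\cobc{\bc{A}}$, so change of rings gives $R\otimes^{\mathbf{L}}_{\cobc{C}}G\simeq R\otimes^{\mathbf{L}}_{\cobc{\bc{A}}}G$; moreover $\gamma\colon G\to M$ is a strict morphism of $\ai$-$A$-modules (\cref{c_ai_module}), hence a quasi-isomorphism of dg $\cobc{\bc{A}}$-modules by \cref{ai-mod-dg-mod-cobar-bar}, so $R\otimes^{\mathbf{L}}_{\cobc{\bc{A}}}G\simeq R\otimes^{\mathbf{L}}_{\cobc{\bc{A}}}M$.

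The remaining point, and the one I expect to be the main obstacle, is the identification $R\otimes^{\mathbf{L}}_{\cobc{\bc{A}}}M\simeq M$ of $R$-complexes, matching the evaluation map with the identity. The key is that the $\cobc{\bc{A}}$-module structures on both $R$ (as a module over itself, restricted along the strict $\ai$-quasi-isomorphism $A\to R$) and $M$ (an honest $R$-complex, restricted the same way) are obtained by restriction along a quasi-isomorphism of dg algebras $\cobc{\bc{A}}\xrightarrow{\ \simeq\ }R$: when $A$ is an honest dg algebra this is the counit of the bar--cobar adjunction of \cref{bar_construction} composed with $A\to R$, and in the general $\ai$ case one obtains it by comparing $A$ with a genuine dg algebra resolution $A'\to R$ via \cref{p_Ainfinity_lifting} together with the invariance of $\cobc{\bc{(-)}}$ under $\ai$-quasi-isomorphisms. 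Once this is in place, a further change of rings yields $R\otimes^{\mathbf{L}}_{\cobc{\bc{A}}}M\simeq R\otimes^{\mathbf{L}}_R M = M$, and tracing $r\otimes c\otimes g\mapsto r\,\epsilon(c)\,\gamma(g)$ through the successive identifications — the last genuinely delicate bookkeeping — confirms that this explicit map realizes the composite equivalence. Combined with the semifreeness established in the first paragraph, this shows $R\totimes C\totimes G\to M$ is a semifree resolution of $M$ over $R$.
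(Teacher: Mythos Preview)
Your argument is correct, and it uses the same ingredients as the paper's proof---the diagonal resolution of \cref{l:diagonal_resolution}, the quasi-isomorphism $\cobc{C}\to R$, and the fact that $G\to M$ is a quasi-isomorphism of dg $\cobc{\bc{A}}$-modules---but it organizes them differently. You work in the derived category and assemble a chain of change-of-rings equivalences $R\otimes^{\mathbf L}_{\cobc{C}}G\simeq R\otimes^{\mathbf L}_{\cobc{\bc{A}}}G\simeq R\otimes^{\mathbf L}_{\cobc{\bc{A}}}M\simeq R\otimes^{\mathbf L}_{R}M=M$, then identify the explicit map with the composite. The paper instead constructs a single commutative triangle
\[
\begin{tikzcd}[row sep=3mm, column sep=2mm]
 \& \cobc{C} \totimes C \totimes G \ar[dl,"\alpha"']\ar[dr,"\gamma\beta"]\&\\
 R \totimes C \totimes G \ar[rr] \&\& M
\end{tikzcd}
\]
where $\beta\colon\cobc{C}\totimes C\totimes G\to G$ comes from \cref{l:diagonal_resolution} and $\alpha$ is induced by $\cobc{C}\to R$; semifreeness of $\cobc{C}\totimes C\totimes G$ over $\cobc{C}$ makes $\alpha$ a quasi-isomorphism, and the two-out-of-three property finishes. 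The advantage of the paper's route is that the ``delicate bookkeeping'' you flag---showing that $r\otimes c\otimes g\mapsto r\,\epsilon(c)\,\gamma(g)$ really realizes the chain of derived equivalences---disappears: commutativity of the triangle is immediate from the formulas. Your approach, on the other hand, makes the underlying derived-category content more transparent. For the quasi-isomorphism $\cobc{\bc{A}}\to R$, the paper bypasses your detour through a dg model $A'$ by appealing directly to Nakayama: $\cobc{\bc{A}}\to A$ becomes a quasi-isomorphism after $-\otimes_Q k$, hence is one already.
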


\begin{proof}
The map $G \to M$ is a quasi-isomorphism of $\ai$-modules over $A$, and by \cref{ai-mod-dg-mod-cobar-bar}, it is a quasi-isomorphism of dg modules over $\cobc{\bc{A}}$, and thus over $\cobc{C}$. From \cref{l:diagonal_resolution}, we obtain the quasi-isomorphism of (left) dg $\cobc{C}$-modules
\begin{equation*}
\beta\colon \cobc{C} \totimes C \totimes G = \cobc{C} \totimes C \totimes \cobc{C} \otimes_{\cobc{C} } G \xra{\simeq} \cobc{C} \otimes_{\cobc{C}} G \cong G\,.
\end{equation*}
Since $G$ is semifree over $Q$, it follows that $\cobc{C} \totimes C \totimes G$ is semifree as a left dg module over $\cobc{C}$.

The claimed quasi-isomorphism fits into the commutative diagram below
\[
\begin{tikzcd}[row sep=3mm, column sep=2mm]
 \& \cobc{C} \totimes C \totimes G \ar[dl,"\alpha"']\ar[dr,"\gamma \beta"]\&\\
 R \totimes C \totimes G \ar[rr] \&\& M\,,
\end{tikzcd}
\]
where $\alpha$ is the map induced by the quasi-isomorphism of dg algebras 
\begin{equation}\label{e:acyclic_qi}
\cobc{C}\xra{\simeq} \cobc{\bc{A}} \xra{\simeq} A \xra{\simeq} R\,;
\end{equation}
where the fact that second map is a quasi-isomorphism follows from the derived version of Nakayama's lemma since, upon applying $-\otimes_Q k$, the map becomes a quasi-isomorphism ~\cite[Section~2.2.1]{LefevreHasegawa:2003}.

As the composition in \cref{e:acyclic_qi} is a quasi-isomorphism and $\cobc{C} \totimes C \totimes G $ is a semifree resolution of $M$ over $\cobc{C}$, it follows that $\alpha$ is a quasi-isomorphism. We have already justified that $\beta$ and $\gamma$ are quasi-isomorphisms, accounting for the downward arrow on the right. As both legs of the triangle in the diagram above are quasi-isomorphisms the horizontal map is a quasi-isomorphism, as claimed.
\end{proof}

\section{\texorpdfstring{$\ai$}{A-infinity}-algebra presentations for Koszul homomorphisms} \label{sec:koszul-via-ai}

We now have the machinery to show that Koszul homomorphisms admit presentations analogous to those of classical Koszul $k$-algebras. The next result lifts these classical quadratic presentations to local algebra, and explains how one may think of Koszul homomorphisms as $\ai$-deformations of Koszul algebras over fields. 

\begin{theorem} \label{koszul-ai-presentation}
A finite local homomorphism $\phi \colon Q \to R$ is Koszul if and only if there is
\begin{enumerate}[(1)]
\item\label{koszul-ai-presentation:V-W} a non-negatively graded, degreewise finite rank free $Q$-module $V$, and a direct summand $W\subseteq V \otimes V$,
\item\label{koszul-ai-presentation:ai} an $\ai$-structure $\{m_n\}$ on the $Q$-module $\tmod{V}/(W)$ with grading induced from the grading of $V$, and
\item\label{koszul-ai-presentation:VtoR} a $Q$-linear map $V_0 \to R$,
\end{enumerate}
such that
\begin{enumerate}[(i)]
\item\label{koszul-ai-presentation:Koszul} the $k$-algebra $\talg{V \otimes k}/(W \otimes k)$ is Koszul with respect to the tensor algebra weight grading,
\item\label{koszul-ai-presentation:modk-talg} $\{m_n\}$ agrees with the algebra structure on $\talg{V}/(W)$ modulo $\fm_Q$; that is 
\begin{equation*}
m_2 \otimes k=\mu \otimes k \quad\text{and}\quad m_n\otimes k = 0 \ \text{ for }\ n \neq 2\,,
\end{equation*}
where $\mu$ is the usual product on the quotient of a tensor algebra, and
\item\label{koszul-ai-presentation:ai-resn} with the structure $\{m_n\}$, the induced map $\tmod{V}/(W)\to R$ is a strict $\ai$-algebra quasi-isomorphism.
\end{enumerate}
Moreover, $V$ can be taken to be finite rank whenever $R$ has finite projective dimension over $Q$.
\end{theorem}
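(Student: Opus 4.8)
The plan is to prove the two implications of the equivalence separately, together with the final assertion about finite rank; the forward direction (assuming $\phi$ Koszul) carries essentially all of the content.

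For the reverse direction, I would start from the data in \cref{koszul-ai-presentation:V-W,koszul-ai-presentation:ai,koszul-ai-presentation:VtoR} and set $A\coloneqq\tmod V/(W)$ with its $\ai$-structure $\{m_n\}$. The first point to establish is that $A$ is degreewise $Q$-free and that $A\otimes_Q k\cong\talg{V\otimes_Q k}/(W\otimes_Q k)$: this should follow from $W$ being a direct summand of $V\otimes V$ together with \cref{koszul-ai-presentation:Koszul}, via the structure theory of Koszul algebras over a commutative base, as discussed below. Granting this, \cref{koszul-ai-presentation:modk-talg} forces $m_1\otimes_Q k=0$, so by \cref{koszul-ai-presentation:ai-resn} the complex $(A,m_1)$ is a minimal $Q$-free resolution of $R$ carrying an $\ai$-structure for which $A\to R$ is a strict $\ai$-quasi-isomorphism. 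Then \cref{p_formality} gives that $R\lotimes_Q k$ is $\ai$-quasi-isomorphic to $A\otimes_Q k$, and by \cref{koszul-ai-presentation:modk-talg} the latter has trivial $\ai$-structure and equals the graded algebra $\talg{V\otimes_Q k}/(W\otimes_Q k)$; this algebra is its own homology, so $R\lotimes_Q k$ is formal with $\Tor QRk\cong\talg{V\otimes_Q k}/(W\otimes_Q k)$, which is Koszul by \cref{koszul-ai-presentation:Koszul}. Hence $\phi$ is Koszul.

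For the forward direction, assume $\phi$ is Koszul, so $T\coloneqq\Tor QRk$ is formal and Koszul in the sense of \cref{koszul-algebra}. I would fix a weight grading witnessing Koszulity, so that $T$ is quadratic with respect to it, say $T=\talg{\bar V}/(\bar W)$ with $\bar V\coloneqq T_{(1)}$ a non-negatively (homologically) graded, degreewise finite-dimensional $k$-vector space and $\bar W\subseteq\bar V\otimes_k\bar V$ the weight-two relations. Choose a non-negatively graded, degreewise finite rank free $Q$-module $V$ lifting $\bar V$, and (by Nakayama's lemma) a free direct summand $W\subseteq V\otimes V$ lifting $\bar W$; this is \cref{koszul-ai-presentation:V-W}, and the freeness fact recalled above shows $\tmod V/(W)$ is degreewise $Q$-free with reduction $T$, giving \cref{koszul-ai-presentation:Koszul}. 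To install the $\ai$-structure I would invoke \cref{p_formality} once more: since the derived fiber of $\phi$ is formal, the minimal $Q$-free resolution $A$ of $R$ carries an $\ai$-structure with $A\to R$ a strict $\ai$-quasi-isomorphism, $m_1^A\otimes_Q k=0$, $m_2^A\otimes_Q k$ equal to the product on $T$, and $m_n^A\otimes_Q k=0$ for $n\geqslant 3$. Both $A$ and $\tmod V/(W)$ are degreewise $Q$-free with the same $k$-reduction $T$, so lifting $\id_T$ yields a graded $Q$-module isomorphism $\theta\colon\tmod V/(W)\xra{\ \cong\ }A$ with $\theta\otimes_Q k=\id_T$; transporting $\{m_n^A\}$ along $\theta$ makes $\theta$ a strict $\ai$-isomorphism, and composing with $A\to R$ gives \cref{koszul-ai-presentation:ai-resn}, while \cref{koszul-ai-presentation:modk-talg} holds since $\theta\otimes_Q k=\id_T$ and the canonical product on $\tmod V/(W)$ reduces modulo $\fm_Q$ to the product on $T$. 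For \cref{koszul-ai-presentation:VtoR} I would take the weight-one, homological-degree-zero component of the resulting quasi-isomorphism. Finally, if $\pdim_Q R<\infty$ then $T=\Tor QRk$ is concentrated in finitely many homological degrees and is degreewise finite-dimensional, hence finite-dimensional over $k$, so $\bar V$ and therefore $V$ can be taken of finite rank.

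I expect the main obstacle to be the freeness claim used in both directions: that a direct summand $W\subseteq V\otimes V$ whose closed fiber presents a Koszul $k$-algebra $\talg{V\otimes_Q k}/(W\otimes_Q k)$ forces $\tmod V/(W)$ to be degreewise $Q$-free and to base change correctly to that fiber. The natural route is to use that Koszulity of a quadratic algebra over a field is equivalent to distributivity of the lattice of subspaces generated by the $(V\otimes_Q k)^{\otimes i}\otimes(W\otimes_Q k)\otimes(V\otimes_Q k)^{\otimes j}$ inside $(V\otimes_Q k)^{\otimes n}$, and to show that such distributivity — being detected on the closed fiber — lifts to the analogous submodules of $V^{\otimes n}$ over the local ring $Q$, so that every graded piece of the two-sided ideal $(W)$ is a direct summand of the corresponding tensor power of $V$. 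Everything else is routine: one application of \cref{p_formality}, some linear algebra over $Q$, and the transport of an $\ai$-structure along an isomorphism of graded modules.
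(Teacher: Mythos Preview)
Your outline follows the paper's proof almost verbatim: both directions rest on \cref{p_formality}, and in the forward direction both you and the paper lift $\bar V$ and $\bar W$ to $V$ and a direct summand $W\subseteq V\otimes V$, identify $\tmod V/(W)$ with the minimal resolution, and transport the $\ai$-structure. The paper simply asserts that $\tmod V/(W)$ is a free bigraded $Q$-module and moves on; you are right to isolate this as the one nontrivial step.

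However, your proposed justification---that distributivity of the lattice generated by the $\bar V^{\otimes i}\otimes\bar W\otimes\bar V^{\otimes j}$ over $k$ lifts to force each $\sum V^{\otimes i}\otimes W\otimes V^{\otimes j}$ to be a summand of $V^{\otimes n}$ over $Q$---does not work as stated. Take $Q=k[\![t]\!]$, $V=Qe_1\oplus Qe_2$, and $W=Q\cdot(e_1\otimes e_1+t\,e_2\otimes e_2)$. Then $W$ is a direct summand, and the fibre $k\langle e_1,e_2\rangle/(e_1^2)$ is a Koszul (quadratic monomial) algebra, so the lattice over $k$ is distributive. Yet $t(e_2e_2e_1-e_1e_2e_2)\in W\otimes V+V\otimes W$ while $e_2e_2e_1-e_1e_2e_2$ is not, so $V^{\otimes3}/(W\otimes V+V\otimes W)$ has $t$-torsion and is not free. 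Thus distributivity on the closed fibre alone does not control the lift. For the forward direction this is not fatal, since you may \emph{choose} the lift: in the example, taking $W=Q\cdot e_1\otimes e_1$ gives a free quotient, and more generally you should argue that some lift of $\bar W$ makes $\tmod V/(W)$ free. For the reverse direction, the conditions (ii) and (iii) are doing real work: they force $(\tmod V/(W),m_1)$ to be a complex with differential in $\fm_Q$ that resolves $R$, and one must check that this, together with the automatic freeness of the first three weights $Q$, $V$, $(V\otimes V)/W$, propagates to higher weights. Neither step is the one-liner your outline suggests.
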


\begin{proof}
If the stated conditions hold then $R\lotimes_Q k$ is formal by \cref{p_formality}, and $\Tor QRk\cong\talg{V \otimes k}/(W \otimes k)$ is Koszul, therefore $\phi$ is Koszul by definition.

Assume, conversely, that $\phi$ is Koszul. 
Since $\Tor{Q}{R}{k}$ is Koszul, it admits a compatible weight grading making it quadratic. That is, we have an isomorphism
\begin{equation*}
\Tor[i]{Q}{R}{k}_{(w)} \cong \big(\talg[w]{\bar{V}}/(\bar{W})\big)_i
\end{equation*}
identifying the product on Tor with the product on the quotient of the tensor algebra, where $\bar{V} = \Tor{Q}{R}{k}_{(1)}$ and $\bar{W} \subseteq \bar{V} \otimes_k \bar{V}$ is a graded subspace.

Let $V$ be a free graded $Q$-module such that $V\otimes k =\bar{V}$, and choose a direct summand $W \subseteq V \otimes V$ such that $W\otimes k =\bar{W}$. If we define $A\coloneqq \tmod{V}/(W)$, then $A$ is a free, bigraded $Q$-module and
\[
A\otimes_Qk= \tmod{V}/(W)\otimes k = \tmod{\bar{V}}/(\bar{W}) \cong \Tor QRk\,.
\]
Therefore we may equip $A$ with a differential making it the minimal $Q$-resolution of $R$. We have constructed \cref{koszul-ai-presentation:V-W} and \cref{koszul-ai-presentation:VtoR} satisfying condition \cref{koszul-ai-presentation:Koszul}. Since $R\lotimes_Qk$ is formal, by \cref{p_formality} there is an $\ai$-structure on $A$ as required for \cref{koszul-ai-presentation:ai}, inducing the algebra structure on $\Tor QRk$ and satisfying the conditions \cref{koszul-ai-presentation:modk-talg} and \cref{koszul-ai-presentation:ai-resn}.
\end{proof}

In \cref{sec:examples-special-koszul} we illustrate \cref{koszul-ai-presentation} in detail using the examples in \cref{sec:examples-koszul}. 

\subsection{Strictly Koszul presentations} \label{sec:special-koszul}

For a local homomorphism $\phi \colon Q \to R$, \cref{resn-trans-qi-coalgebras} allowed us to obtain free resolutions over $R$ starting from free resolutions over $Q$. The main input to this theorem was a curved dg coalgebra $C$ over $Q$ with quasi-isomorphism $\cobc{C}\to R$. Our philosophy is that when $\phi$ is Koszul $C$ should have a simple description. In this section we introduce additional technical assumptions that will allow us to explicitly construct $C$, mimicking a classical construction of Priddy.

\begin{definition} \label{d:special-Koszul}
Let $\phi \colon Q \to R$ be Koszul. Recall from \cref{koszul-ai-presentation} that $R$ admits an $\ai$-algebra resolution $A$ over $Q$ with a quadratic presentation $A\cong \tmod{V}/(W)$ satisfying the conditions \cref{koszul-ai-presentation:Koszul,koszul-ai-presentation:modk-talg,koszul-ai-presentation:ai-resn}. The data $(A,V,W)$ is called a \emph{strictly Koszul presentation} for $\phi$ if, in addition to these conditions, 
\begin{equation} \label{eq:special-Koszul}
\bar{m}_1(V) \subseteq V \quad\text{and}\quad \bar{m}_n\big(\bigcap_{i+2+j=n} V^{\otimes i} \otimes W \otimes V^{\otimes j} \big) \subseteq V \quad \text{for } n \geqslant 2\,,
\end{equation}
where we have used the inclusion $\bigcap_{i+2+j=n} V^{\otimes i} \otimes W \otimes V^{\otimes j}\subseteq V^{\otimes n}\subseteq \bar{A}^{\otimes n}$ to apply the $\ai$-operations $\bar{m}_n$ of $\bar{A}$. If the homomorphism $\phi$ admits a strictly Koszul presentation, then $\phi$ is called \emph{strictly Koszul}.

In this setting, we define
\[
\priddy[n]{V,W} \coloneqq \bigcap_{i+2+j=n} (\susp V)^{\otimes i} \otimes \susp^2 W \otimes (\susp V)^{\otimes j} \subseteq \bc[n]{A}\,.
\]
By definition, the curvature term, the coderivation and the comultiplication on $\bc{A}$ restrict to maps on $\priddy{V,W}$, and hence $\priddy{V,W}$ is a counital curved dg coalgebra. We call $\priddy{V,W}$ the \emph{Priddy coalgebra associated to $(A,V,W)$}. If the presentation is clear from the context, we say it is the \emph{Priddy coalgebra of $\phi$}, and we write
\[
\mathsf{C}(\phi) \coloneqq \priddy{V,W}\,.
\]
\end{definition}

\begin{chunk} \label{dual_priddy}
Let $\phi$ have a strictly Koszul presentation $(A,V,W)$, and let $(-)^\vee$ denote graded $Q$-linear duality. In this setting, one can directly compute from the definition of the Priddy coalgebra of $\phi$ that
\begin{equation}\label{priddy-dual}
\mathsf{C}(\phi)^\vee = \talg{\susp^{-1} V^\vee}/(\susp^{-2} W^\perp)
\end{equation}
where $W^\perp = \set{f \in (V \otimes V)^\vee}{f(W) = 0}$; this uses that $V$ is free and $W\subseteq V\otimes V$ is a summand.
\end{chunk}

What we call the Priddy coalgebra first appeared, for algebras over a field, in the work of Priddy \cite[Section~3]{Priddy:1970}, where it is called the Koszul complex. See also \cite[Section~2.6]{Beilinson/Ginzburg/Soergel:1996} and \cite[Chapter 3]{Loday/Vallette:2012} (where our notation is taken from).

In \cref{sec:examples-special-koszul} we show that complete intersection and Golod homomorphisms are strictly Koszul, as well as Cohen presentations of almost Golod Gorenstein local rings. In fact, we are not able to construct surjective Koszul homomorphisms that are not strictly Koszul, therefore we ask:

\begin{question}\label{q_is_everyone_special}
For a surjective Koszul homomorphism $\phi \colon Q \to R$, is it always possible to construct a strictly Koszul presentation $(A,V,W)$ as in \cref{d:special-Koszul}?
\end{question}

We think of $R$ and $\mathsf{C}(\phi)^\vee$ as being Koszul dual to each other relative to $Q$. The next result justifies this, and in particular it says that this specializes, at the maximal ideal of $Q$, to classical Koszul duality over $k$.

\begin{theorem}\label{priddy-subcoalgebra}
Let $\phi \colon Q \to R$ be a strictly Koszul homomorphism. Then $\priddy{\phi}$ is minimal in the sense that $\partial(\priddy{\phi})\subseteq \fm_Q \priddy{\phi}$, and the inclusion $\priddy{\phi} \to \bc{A}$ is a weak equivalence of connected curved dg coalgebras. Moreover, both $T=\Tor{Q}{R}{k}$ and $E = \Hom{}{\mathsf{C}(\phi)}{k}$ are Koszul $k$-algebras and there $k$-algebra isomorphisms
\[
\Ext{T}{k}{k} \cong E \quad\text{and}\quad \Ext{E}{k}{k} \cong T\,.
\]
\end{theorem}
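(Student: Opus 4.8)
The plan is to establish the four assertions in the order they are stated: first minimality of $\priddy{\phi}$, then the weak equivalence $\priddy{\phi}\to \bc{A}$, then Koszulity of $T$ and $E$, and finally the Koszul duality isomorphisms. Throughout I will use the strictly Koszul presentation $(A,V,W)$ from \cref{d:special-Koszul} and the formula \cref{priddy-dual} for the dual.

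\textbf{Minimality.} The coderivation on $\bc{A}$ has components built from the $\ai$-operations $\bar m_n$ of $\bar A$, summed over insertions $\id^{\otimes i}\otimes \susp\bar m_k(\susp^{-1})^{\otimes k}\otimes \id^{\otimes j}$. Restricted to $\priddy[n]{V,W}=\bigcap_{i+2+j=n}(\susp V)^{\otimes i}\otimes\susp^2 W\otimes(\susp V)^{\otimes j}$, condition \cref{eq:special-Koszul} guarantees each $\bar m_k$ lands back in $V$, so the differential is well-defined on $\priddy{\phi}$ — and moreover, modulo $\fm_Q$, condition \cref{koszul-ai-presentation:modk-talg} says $\bar m_n\otimes k=0$ for $n\neq 2$ and $\bar m_2\otimes k=\mu\otimes k$. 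The only surviving term of $\partial$ after reducing mod $\fm_Q$ would come from $\bar m_2$; but on $\priddy[n]{V,W}$ with $n\ge 2$ the element lies in $\bigcap V^{\otimes i}\otimes W\otimes V^{\otimes j}$, and $\mu$ composed with the projection $W\to (V\otimes V)/W$ is zero on $W$. (The $n=2$ component $\susp^2 W\to \susp^1 Q=\bc[1]{A}_{?}$ via the curvature $h_2$, and the $n=1$ piece via $h_1$, must be handled too: $h_1,h_2$ have image in $\fm_Q$ since $A$ is the \emph{minimal} resolution, so its differential — which is $m_1=\bar m_1+h_1$ — has entries in $\fm_Q$.) Hence $\partial(\priddy{\phi})\subseteq \fm_Q\priddy{\phi}$.

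\textbf{Weak equivalence and Koszulity.} The inclusion $\priddy{\phi}\to \bc{A}$ is a morphism of connected curved dg coalgebras by construction; by \cref{morphism_coalgebra} it is a weak equivalence precisely when $\cobc{\priddy{\phi}}\to \cobc{\bc{A}}$ is a quasi-isomorphism. Applying $-\otimes_Q k$ and using minimality, this reduces to checking that $\cobc{\priddy{\phi}\otimes k}\to \cobc{\bc{A}\otimes k}$ is a quasi-isomorphism of dg $k$-algebras; but $\priddy{\phi}\otimes k$ is exactly the classical Priddy/Koszul complex of the \emph{Koszul} $k$-algebra $T=\talg{V\otimes k}/(W\otimes k)$, whose cobar construction is quasi-isomorphic to $\bc{A}\otimes k\simeq B_k(k,T,k)$ — this is Priddy's theorem \cite[Section~3]{Priddy:1970} (see also \cite[Chapter~3]{Loday/Vallette:2012}). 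Then I invoke the derived Nakayama lemma as in \cite[Section~2.2.1]{LefevreHasegawa:2003} to lift the quasi-isomorphism over $k$ to one over $Q$. That $T$ is Koszul is condition \cref{koszul-ai-presentation:Koszul}; that $E=\Hom{}{\mathsf C(\phi)}{k}=(\priddy{\phi}\otimes k)^\vee$ is Koszul follows because, by \cref{priddy-dual}, $E=\talg{(V\otimes k)^\vee}/((W\otimes k)^\perp)$ is the quadratic dual of $T$, and the quadratic dual of a Koszul algebra is Koszul \cite[Section~2.9]{Beilinson/Ginzburg/Soergel:1996}.

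\textbf{Koszul duality isomorphisms.} For a Koszul algebra $T$ over $k$ with quadratic dual $T^!$, classical Koszul duality gives $\Ext{T}{k}{k}\cong T^!$ as graded $k$-algebras (Priddy \cite{Priddy:1970}, \cite[Chapter~3]{Loday/Vallette:2012}). Here $T^!=E$ by \cref{priddy-dual}, giving $\Ext{T}{k}{k}\cong E$; applying the same fact to the Koszul algebra $E$, whose quadratic dual is $(E)^!=T$ (quadratic duality is an involution), gives $\Ext{E}{k}{k}\cong T$. \textbf{The main obstacle} I anticipate is bookkeeping: verifying that the curvature terms $h_1,h_2$ and the suspension signs in \cref{bar_construction} interact correctly with the strict Koszul hypothesis \cref{eq:special-Koszul} so that the minimality claim is literally true and the reduction mod $\fm_Q$ identifies $\priddy{\phi}\otimes k$ with Priddy's classical Koszul complex on the nose — once that identification is clean, everything else is an appeal to \cref{p_formality}, \cite{Priddy:1970}, and the derived Nakayama argument already used in \cref{resn-trans-qi-coalgebras}.
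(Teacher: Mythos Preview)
Your proof is correct and follows essentially the same route as the paper's. The paper streamlines the minimality argument by citing \cite[Proposition~3.3.2]{Loday/Vallette:2012} for the vanishing of the differential on $\priddy{V\otimes k,W\otimes k}$, whereas you re-derive this directly from $m_n\otimes k=0$ for $n\neq 2$ and $\mu|_W=0$; both are fine. One small correction: your discussion of $h_1,h_2$ is misplaced---the minimality claim concerns only the coderivation $\partial$, not the curvature, so no argument about $h_1,h_2$ is needed there; and the reason $h_2\otimes k=0$ is condition~\ref{koszul-ai-presentation:modk-talg} (that $m_2\otimes k=\mu\otimes k$ is augmented), not minimality of $A$. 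For the weak equivalence and duality statements, your appeal to \cite[Theorem~3.4.6]{Loday/Vallette:2012} over $k$, followed by derived Nakayama to lift to $Q$, and then to \cite[2.10]{Beilinson/Ginzburg/Soergel:1996} via the identification of $E$ as the quadratic dual from \eqref{priddy-dual}, matches the paper exactly.
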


\begin{proof}
We fix a strictly Koszul presentation, so that $\priddy{\phi}=\priddy{V,W}$.

By \cref{koszul-ai-presentation} the $\ai$-structure on $A$ satisfies $m_n\otimes k=0$ for $n\neq 2$, and $A \otimes k=\talg{V \otimes k}/(W \otimes k)$ is a quadratic algebra. It then follows from \cite[Proposition~3.3.2]{Loday/Vallette:2012} that the differential of $\priddy{V,W}\otimes k =\priddy{V \otimes k,W \otimes k}\subseteq \bc{A \otimes k}$ is zero. Therefore $\priddy{V,W}$ is minimal.

Since $T=A\otimes k$ is Koszul by assumption, $\priddy{V \otimes k,W \otimes k} \to \bc{A \otimes k}$ is a weak equivalence by \cite[Theorem~3.4.6]{Loday/Vallette:2012}. Since $V$ is free and $W\subseteq V\otimes V$ is a summand, $\cobc{\priddy{V \otimes k,W \otimes k}}=\cobc{\priddy{V,W}}\otimes k$ and $\cobc{\bc{A \otimes k}}=\cobc{\bc{A}} \otimes k$, so it follows from the the derived version of Nakayama's lemma that $\priddy{V,W} \to \bc{A}$ is a weak equivalence as well.

Since $\mathsf{C}(\phi)$ is minimal, the coproduct induces the structure of a graded $k$-algebra on $\Hom{}{\mathsf{C}(\phi)}{k}$, with zero differential. Using \cref{dual_priddy}, it follows that
\[
E =\mathsf{C}(\phi)^\vee \otimes k = \talg{\susp^{-1} V^\vee\otimes k}/(\susp^{-2} W^\perp \otimes k)\,.
\]
Therefore $E$ is the quadratic dual of $T=\talg{V\otimes k}/(W\otimes k)$, and the final statement follows from \cite[2.10]{Beilinson/Ginzburg/Soergel:1996}.
\end{proof}

\subsection{The Priddy resolution}

We have arrived at one of the main applications of our techniques. The next result provides explicit ``universal resolutions'' for modules over the target of a strictly Koszul homomorphism. It recovers the Shamash resolution in the case of complete intersection homomorphisms, and the bar resolution of Iyengar and Burke in the case of Golod homomorphisms. We present these and other examples in the next section.

\begin{theorem} \label{priddy-resolution}
Let $\phi \colon Q \to R$ be a Koszul homomorphism with a strictly Koszul presentation $(A,V,W)$. Assume that $M$ is an $R$-complex with a semifree resolution $G \to M$ over $Q$ and that $G$ has a strictly unital $\ai$-module structure over $A$. Then
\begin{equation*}
R \totimes \priddy{V,W} \totimes G \longrightarrow M
\end{equation*}
is a semifree resolution over $R$, with differential given by
\begin{align*}
\partial^\tau &= \sum_{\substack{r+s+t=n\\r,t \geqslant 0, s \geqslant 1}} (-1)^{\frac{s(s+1)}{2}} \id_R \otimes (\id^{\otimes r} \otimes \shift \bar{m}_s (\shift^{-1})^{\otimes s} \otimes \id^{\otimes t}) \otimes \id_G \\
&\quad + \sum_{\substack{i+j=n+1\\i \geqslant 0, j \geqslant 1}} (-1)^{\frac{(j-1)(j-2)}{2}} \id_R \otimes \id^{\otimes i} \otimes \bar{m}^G_j ((\shift^{-1})^{\otimes (j-1)} \otimes \id_G)\,.
\end{align*}
\end{theorem}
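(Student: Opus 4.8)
The plan is to deduce \cref{priddy-resolution} as a special case of \cref{resn-trans-qi-coalgebras}, applied to the curved dg coalgebra $C = \priddy{V,W} = \mathsf{C}(\phi)$, with the only substantive work being to unwind what the abstract twisted differential $\partial^\tau$ from \cref{twisted-tensor-product} becomes in this concrete setting, and to verify the hypotheses of \cref{setup-ring-map-ai-qi}.

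First I would check the hypotheses of \cref{setup-ring-map-ai-qi} are in force. By \cref{koszul-ai-presentation}, since $(A,V,W)$ is a strictly Koszul presentation, $A = \tmod V/(W)$ is an $\ai$-algebra over $Q$ with a strict $\ai$-quasi-isomorphism $A \to R$, and it is split unital and connective. The module $G$ is given with a semifree $Q$-resolution $G \to M$ and a strictly unital $\ai$-module structure over $A$ (the existence of such a structure on a semifree resolution is \cref{c_ai_module}, but here it is part of the hypothesis). By \cref{ai-mod-dg-mod-cobar-bar} this gives $G$ the structure of a dg module over $\cobc{\bc A}$. Finally, $\priddy{V,W} \to \bc A$ is a weak equivalence of connected curved dg coalgebras by \cref{priddy-subcoalgebra}, so $C = \priddy{V,W}$ is a connected curved dg coalgebra equipped with a weak equivalence to $\bc A$, and $R$ and $G$ acquire induced $\cobc{C}$-module structures. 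Thus \cref{resn-trans-qi-coalgebras} applies verbatim and yields that $R \totimes \priddy{V,W} \totimes G \to M$, $r \otimes c \otimes g \mapsto r\,\epsilon(c)\,\gamma(g)$, is a semifree resolution of $M$ over $R$.

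It remains to identify the differential. The general formula from \cref{twisted-tensor-product} has four summands: $\partial^F \otimes \id \otimes \id$, $\id \otimes \partial^C \otimes \id$, $\id \otimes \id \otimes \partial^G$, and the twisting term built from $\mu(\id_F \otimes \shift^{-1}p) \otimes \id - \id \otimes \id \otimes \mu(\shift^{-1}p \otimes \id)$ precomposed with $\id \otimes \Delta \otimes \id$. Here $F = R$ is an honest $R$-module concentrated in degree zero, so $\partial^F = 0$. The point is then to combine the second term ($\id \otimes \partial^C \otimes \id$, where $\partial^C$ is the restriction to $\priddy{V,W}$ of the bar differential $\partial^{\bc A}$) with the part of the twisting term involving the left action on $R$; and to combine the third term ($\id \otimes \id \otimes \partial^G$) with the part of the twisting term involving the left $\cobc{C}$-action on $G$. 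Using the explicit components of $\partial^{\bc A}$ from \cref{bar_construction}, which are precisely $(-1)^{k(k+1)/2}\sum_{i+j}(\id^{\otimes i}\otimes \shift\bar m_k(\shift^{-1})^{\otimes k}\otimes \id^{\otimes j})$, together with the fact that on $R = \cobc{\bc A}/\cobc{\bc A}_{\geqslant 1}$ the action of $\susp^{-1}\bar C$ is via $\shift^{-1}p$ followed by the $\ai$-product maps $\bar m_s$ (the components not preserving $C$ becoming the ``leg'' into $R$), and that the $\cobc C$-action on $G$ is the one induced in \cref{ai-mod-dg-mod-cobar-bar} by $-(-1)^{n(n-1)/2}\bar m^G_{n+1}((\shift^{-1})^{\otimes n}\shift \otimes \id_G)$, one regroups all contributions into the two displayed sums: the first over $r+s+t=n$ with sign $(-1)^{s(s+1)/2}$ encoding the bar differential together with the $R$-twisting, and the second over $i+j=n+1$ with sign $(-1)^{(j-1)(j-2)/2}$ encoding $\partial^G$ together with the $G$-twisting. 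This is a careful but routine sign-and-indexing bookkeeping exercise.

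The main obstacle will be precisely this bookkeeping: tracking the Koszul signs through the suspensions $\shift^{\pm 1}$ in the bar construction, and checking that the decomposition of the abstract twisting cochain's contribution — $\mu(\id_R \otimes \shift^{-1}p)\circ(\id\otimes\Delta)$ on the left and its analogue on $G$ on the right — reassembles, after reindexing $s = k$ and shifting $j \mapsto j-1$, into exactly the stated sign conventions $(-1)^{s(s+1)/2}$ and $(-1)^{(j-1)(j-2)/2}$. I would handle this by writing both the general $\partial^\tau$ and the target formula on a general tensor $r \otimes [a_1|\cdots|a_n] \otimes g$ with $[a_1|\cdots|a_n] \in \priddy[n]{V,W}$, expanding each via the coproduct $\Delta$ of the tensor coalgebra (which splits $[a_1|\cdots|a_n]$ into all $[a_1|\cdots|a_i]\otimes[a_{i+1}|\cdots|a_n]$), applying the $R$- and $G$-actions to the outer factors, and matching term by term; the split-unitality ensures the curvature terms $h_1, h_2$ are absorbed correctly and no extra terms appear. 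Once the two differentials agree on such elements, $R$-bilinearity (resp. $\cobc C$-module structure) extends the identity to all of $R \otimes \priddy{V,W}\otimes G$, completing the proof.
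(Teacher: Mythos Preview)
Your approach is exactly the paper's: invoke \cref{priddy-subcoalgebra} to get the weak equivalence $\priddy{V,W}\to\bc{A}$, then apply \cref{resn-trans-qi-coalgebras}. The paper's proof is in fact a single sentence citing these two results and does not spell out the differential bookkeeping at all, so your extra paragraph unwinding $\partial^\tau$ goes beyond what is written there; that unwinding is indeed routine, though your description of the $R$-twisting leg (claiming the action of $\susp^{-1}\bar C$ on $R$ is ``via $\bar m_s$'') is slightly garbled---in the surjective case $V$ is positively graded, so $\susp^{-1}\bar C$ sits in positive degrees and acts trivially on $R$, which is why only $\id_R$ appears in the displayed formula.
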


\begin{proof}
By \cref{priddy-subcoalgebra} we can apply \cref{resn-trans-qi-coalgebras} to obtain the result.
\end{proof}

We call $R \totimes \priddy{V,W} \totimes G$ the \emph{Priddy resolution of $M$} associated to the strictly Koszul presentation $(A,V,W)$. We emphasize that (as long as $M$ and $R$ have finite projective dimension over $Q$) there is \emph{only a finite amount of data needed to construct the Priddy resolution}. Therefore, it would be especially interesting to give an effectively computable answer to \cref{q_is_everyone_special}. 

\begin{chunk} \label{c:lescot}
For any surjective map $\phi \colon Q \to R$ of local rings with common residue field $k$, and any finitely generated $R$-module $M$, Lescot~\cite{Lescot:1990} established the coefficientwise inequality
\[
\ps^R_M(t)\cdot \ps^Q_k(t)\preccurlyeq \ps^Q_M(t)\cdot \ps^R_k(t)\,.
\]
If equality holds, $M$ is said to be \emph{inert} by $\phi$.
\end{chunk}

\begin{chunk} \label{c:small}
A surjective map $\phi \colon Q \to R$ of local rings with common residue field $k$ is called \emph{small} if the induced map $\Tor Qkk \to \Tor Rkk$ is injective \cite{Avramov:1978}. For example, any minimal Cohen presentation is small. When $\phi$ is small, there is an equality $\ps^Q_k(t)\cdot \ps^{R\lotimes_Q k}_k(t)=\ps^R_k(t)$ by \cite[Corollary~5.3]{Avramov:1978}.
\end{chunk}

The next result addresses the (non-)minimality of the Priddy resolution.

\begin{theorem}
\label{t_inert}
Let $\phi\colon Q\to R$ be a surjective map of local rings with common residue field $k$. If $\phi$ is small and strictly Koszul with Priddy coalgebra $\priddy{\phi}$, then
\[
\sum_i \rank_Q(\priddy{\phi}_i)t^i=\frac{ \ps^R_k(t)}{P^Q_k(t)}\,.
\]
Moreover, for any finitely generated $R$-module $M$ there is a coefficientwise inequality 
\[
\ps^R_M(t)\preccurlyeq \frac{\ps^Q_M(t)\cdot \ps^R_k(t)}{P^Q_k(t)}\,,
\]
Equality holds if and only if $M$ is inert by $\phi$, if and only if its Priddy resolution with respect to $\phi$ is a minimal resolution. In particular, the Priddy resolution of the residue field $k$ is minimal. 
\end{theorem}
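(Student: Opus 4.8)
The plan is to deduce all statements from the structure of the Priddy resolution established in \cref{priddy-resolution} together with the smallness hypothesis. First I would record the rank identity. By \cref{priddy-subcoalgebra} the inclusion $\priddy{\phi}\to \bc{A}$ is a weak equivalence, so $\cobc{\priddy{\phi}}\xra{\simeq}\cobc{\bc{A}}\xra{\simeq} A\xra{\simeq} R$; tensoring with $k$ and using that $\priddy{\phi}$ is minimal (again \cref{priddy-subcoalgebra}), the complex $\cobc{\priddy{\phi}}\otimes_Q k$ has zero differential, hence computes $\Tor{Q\text{-}}{}{}$... more precisely, since $\phi$ is small, \cref{c:small} gives $\ps^Q_k(t)\cdot\ps^{R\lotimes_Q k}_k(t)=\ps^R_k(t)$, and $\cobc{\priddy{\phi}}$ is a model for $R$ over $Q$ that becomes, after $-\otimes_Q k$, a minimal $\ai$-algebra (zero differential) quasi-isomorphic to $R\lotimes_Q k$. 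Then $\Tor{}{R\lotimes_Q k}{k}{k}=\cobc{\priddy{\phi}\otimes_Q k}$ as graded $k$-vector spaces, and since the cobar construction on the coalgebra $\priddy{\phi}\otimes_Q k$ is the tensor algebra on $\susp^{-1}\overline{\priddy{\phi}\otimes_Q k}$, counting ranks gives $\ps^{R\lotimes_Q k}_k(t)=\sum_i\rank_Q(\priddy{\phi}_i)t^i$ after accounting for the internal grading — this is the Koszulity bookkeeping as in the proof of \cref{p_cohen_koszul_poincare}. Combining yields the first displayed formula.

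Next I would treat the Poincaré series inequality for a general finitely generated $M$. Take a \emph{minimal} $Q$-free resolution $G\to M$, equipped with a strictly unital $\ai$-module structure over $A$ (possible by \cref{c_ai_module}). By \cref{priddy-resolution}, $R\totimes\priddy{\phi}\totimes G\to M$ is an $R$-free resolution, whose underlying graded module is $R\otimes\priddy{\phi}\otimes G$. Its total rank in homological degree $n$ is $\sum_{a+b+c=n}\rank_k(k\otimes\priddy{\phi}_b)\cdot\rank_Q G_c$ — note $\rank_R(R\otimes\priddy{\phi}\otimes G)=\rank_Q(\priddy{\phi})\cdot\rank_Q(G)$ since these are free. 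Therefore
\[
\ps^R_M(t)\preccurlyeq\Big(\sum_i\rank_Q(\priddy{\phi}_i)t^i\Big)\cdot\ps^Q_M(t)=\frac{\ps^R_k(t)}{\ps^Q_k(t)}\,\ps^Q_M(t)\,,
\]
using the rank formula just proved and that $G$ minimal means $\rank_Q G_n=\rank_k\Tor[n]{Q}{M}{k}$, so $\sum\rank_Q(G_n)t^n=\ps^Q_M(t)$. This is exactly the claimed inequality, and it refines Serre's bound \cref{golod_bound} in the way Lescot's bound \cref{c:lescot} does (indeed the right-hand side is $\ps^Q_M(t)\ps^R_k(t)/\ps^Q_k(t)$, matching \cref{c:lescot}).

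For the equality cases: the Priddy resolution is minimal iff its differential $\partial^\tau$ has entries in $\fm_R$, iff its total rank sequence equals the Betti numbers $\ps^R_M(t)$, iff equality holds in the displayed inequality above; and the right-hand side is precisely the quantity appearing in Lescot's bound, so equality there is by definition the statement that $M$ is inert by $\phi$. Finally, applying this with $M=k$: a minimal $Q$-free resolution of $k$ is the Koszul complex on a regular system of parameters (as $Q$ is regular — here $Q$ regular is forced, or at least $\phi$ small suffices for the abstract count), so $\ps^Q_k(t)\cdot\ps^R_k(t)/\ps^Q_k(t)=\ps^R_k(t)$, and one checks $k$ is always inert, hence its Priddy resolution is minimal. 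The main obstacle I anticipate is the precise bookkeeping identifying $\sum_i\rank_Q(\priddy{\phi}_i)t^i$ with $\ps^R_k(t)/\ps^Q_k(t)$: one must carefully use smallness (\cref{c:small}) and the Koszulity of $T=\Tor{Q}{R}{k}$ (via \cref{priddy-subcoalgebra}, which identifies $\Hom{}{\priddy{\phi}}{k}$ with the Koszul dual $E$ and gives $\ps^E_k=\ps^{R\lotimes_Q k}_k$ up to the standard sign substitution) rather than arguing naively from the resolution of $k$ itself, since that would be circular.
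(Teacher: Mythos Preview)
Your overall strategy matches the paper's proof: establish the rank identity via \cref{priddy-subcoalgebra} and smallness, read the inequality off the Priddy resolution of \cref{priddy-resolution}, and identify the equality case with Lescot's inertness. Two points need correction. First, your initial route to the rank identity through $\cobc{\priddy{\phi}\otimes_Q k}$ is garbled: the cobar construction models $T=\Tor{Q}{R}{k}$, not $\Tor{T}{k}{k}$, so the displayed claim ``$\Tor{}{R\lotimes_Q k}{k}{k}=\cobc{\priddy{\phi}\otimes_Q k}$'' is false as written. The fix you gesture at in your final paragraph is the right one, and is exactly what the paper does: \cref{priddy-subcoalgebra} gives $E=\Hom{}{\priddy{\phi}}{k}\cong \Ext{T}{k}{k}$, so the \emph{Hilbert series} of $E$ (not $\ps^E_k$, as you wrote) equals $\ps^T_k(t)$; formality identifies this with $\ps^{R\lotimes_Q k}_k(t)$, and smallness with $\ps^R_k(t)/\ps^Q_k(t)$. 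Second, your aside that ``$Q$ regular is forced'' is incorrect and unnecessary: $Q$ need not be regular, and $k$ is trivially inert since Lescot's inequality becomes $\ps^R_k(t)\ps^Q_k(t)=\ps^Q_k(t)\ps^R_k(t)$.
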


\begin{proof}
We may compute $\H_{\sf C}(t)\coloneqq\sum_i \rank_Q(\priddy{\phi}_i)t^i$ as follows:
\[
\H_{\sf C}(t)=\sum_i \rank_k(\Hom{}{\priddy{\phi}_i}{k}) t^i=\ps^{\Tor QRk}_k(t)=\ps^{R\lotimes_Q k}_k(t)=\frac{ \ps^R_k(t)}{P^Q_k(t)}\,;
\]
the second equality follows from that last statement in \cref{priddy-subcoalgebra}; the third uses formality of $R\lotimes_Q k$; and the last uses the small hypothesis, explained in \cref{c:small}.

\cref{priddy-resolution} directly yields the inequality
\begin{equation}\label{l_ps_inequality_proof}
\ps^R_M(t)\preccurlyeq \ps^Q_M(t)\cdot \H_{\sf C}(t)\,,
\end{equation}
with equality if and only if the Priddy resolution is minimal. At the same time, the computation of $\H_{\sf C}(t)$ above transforms \cref{l_ps_inequality_proof} into the inequality stated in the theorem, and equality holds there by definition when $M$ is inert; see \cref{c:lescot}.
\end{proof}

\begin{remark}
\cref{t_inert} recovers Lescot's bound in \cref{c:lescot} for the homomorphisms considered. One cannot directly recover the former from the latter using manipulations of formal power series as the coefficients of $\ps_k^Q(t)^{-1}$ can be negative. 
\end{remark}

\section{Examples of strictly Koszul presentations} \label{sec:examples-special-koszul}

In this final section we will apply the theory developed above in a series of examples, obtaining explicit resolutions for modules over various classes of rings. We also survey how these constructions relate to known resolutions in the literature.

We fix a local ring $Q$ with residue field $k$.

\begin{figure}[h]
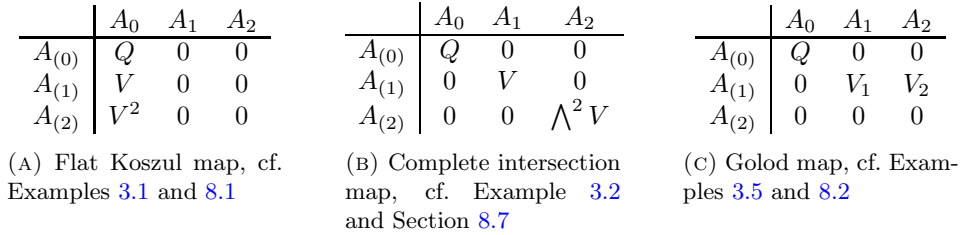

\begin{subfigure}[b]{0.29\textwidth}
\centering
\begin{tabular}{c|c c c}
 & $A_0$ & $A_1$ & $A_2$ \\ \hline 
$A_{(0)}$ & $Q$ & 0 & 0 \\ 
$A_{(1)}$ & $V$ & 0 & 0 \\ 
$A_{(2)}$ & $V^2$ & 0 & 0
\end{tabular}
\caption{Flat Koszul map, cf.\@ \cref{e:koszul:flat,e:special-koszul:flat}\\
 }
\label{fig:grading:flat}
\end{subfigure}
\hfill
\begin{subfigure}[b]{0.29\textwidth}
\centering
\begin{tabular}{c|c c c}
 & $A_0$ & $A_1$ & $A_2$ \\ \hline 
$A_{(0)}$& $Q$ & 0 & 0 \\ 
$A_{(1)}$ & 0 & $V$ & 0 \\ 
$A_{(2)}$& 0 & 0 & $\bigwedge^2V$
\end{tabular}
\caption{Complete intersection map, cf.\@ \cref{e:koszul:ci,e:special-koszul:ci}}
\label{fig:grading:ci}
\end{subfigure}
\hfill
\begin{subfigure}[b]{0.29\textwidth}
\centering
\begin{tabular}{c|c c c}
 & $A_0$ & $A_1$ & $A_2$ \\ \hline 
$A_{(0)}$& $Q$ & 0 & 0 \\ 
$A_{(1)}$ & 0 & $V_1$ & $V_2$ \\ 
$A_{(2)}$& 0 & 0 & 0
\end{tabular}
\caption{Golod map, cf.\@ \cref{e:koszul:golod,e:special-koszul:golod}\\ }
\label{fig:grading:golod}
\end{subfigure}

\caption{Illustration of the weight and homological gradings of the $\ai$-resolution $A$ for various examples.}
\end{figure}

\begin{example}[Flat Koszul homomorphisms] \label{e:special-koszul:flat}
 We begin with a presentation for a commutative Koszul $k$-algebra:
\[
K={k[x_1,\ldots,x_n]}/{(f_1,\ldots,f_m)}\,,
\]
where $f_1,\ldots,f_m$ are quadratic polynomials. To deform this presentation, we consider the $Q$-algebra $Q[x_1,\ldots,x_n]$, weight graded by polynomial degree. We choose elements $F_1,\ldots,F_m$ such that for each $i$
\[
F_i=F_{i,(2)}+F_{i,(1)}+F_{i,(0)}\quad \text{with}\quad F_{i,(w)}\in Q[x_1,\ldots,x_n]_{(w)}\,,
\]
and such that modulo $\fm_Q$, in $k[x_1,\ldots,x_n]$, we have
\[
F_{i,(2)} = f_i\quad \text{and} \quad F_{i,(1)}=F_{i,(0)}= 0\,.
\]
By construction, the homomorphism
\[
\phi\colon Q\longrightarrow R \coloneqq \frac{Q[x_1,\ldots,x_n]}{(F_1,\ldots,F_m)}
\]
is flat, and its fiber $K=R\otimes_Qk$ is Koszul. Therefore $\phi$ is a Koszul homomorphism, as in \cref{e:koszul:flat}.

To show that $\phi$ is strictly Koszul, we take $V=Q[x_1,\ldots,x_n]_{(1)}$ and 
\[
 W=\big\langle \{x_i\otimes x_j-x_j\otimes x_i\}_{ij}, \ \widetilde{F}_{1,(2)},\ldots,\widetilde{F}_{m,(2)} \big\rangle \subseteq V\otimes V\,,
\]
where $\widetilde{F}_{i,(2)}$ are preimages of $F_{i,(2)}$ in $V\otimes V$. Then $\talg{V}/(W)\otimes k\cong K$, so we may choose a compatible isomorphism of $Q$-modules
\[
R\cong \talg{V}/(W)
\]
that restricts to the identity of $V$. 

We obtain a presentation satisfying the conditions of \cref{koszul-ai-presentation}, using $A=R$ with only $m_2$ nonzero. To show that the presentation is strict we note that since $R$ is commutative
\[
m_2(x_i\otimes x_j-x_j\otimes x_i)=0\,,
\]
and we note that since $F_i=0$ in $R$,
\[
m_2(\widetilde{F}_{i,(2)})+F_{i,(1)}+F_{i,(0)} = m_2(\widetilde{F}_{i,(2)}+F_{i,(1)}\otimes 1 + F_{i,(0)}\otimes 1)=0\,.
\]
This shows that
\[
\bar{m}_2(W)\subseteq \big\langle F_{1,(1)},\ldots ,F_{m,(1)}\big\rangle \subseteq V\,.
\]
We can conclude that $(R,V,W)$ is a strictly Koszul presentation for $\phi$. In \cref{fig:grading:flat} we illustrate the grading of $A$.
\end{example}

\begin{example}[Golod homomorphisms] \label{e:special-koszul:golod}
This is the primary example treated by Burke in \cite{Burke:2015}, at least when $Q$ is regular. Continuing \cref{e:koszul:golod}, let $\phi \colon Q \to R$ be a surjective local Golod homomorphism, with a minimal resolution $A$ of $R$ over $Q$. By \cite[Theorem~6.13]{Burke:2015}, for every $\ai$-algebra structure $\{m_n\}$ on $A$ one has $m_n \otimes_Q k = 0$ for $n \neq 2$. Then 
\begin{equation*}
R \lotimes_Q k = A \otimes_Q k = k \ltimes U = \talg{U}/(U \otimes U)
\end{equation*}
where $U$ is the graded $k$-vector space $A_{\geqslant 1}\otimes k$. In particular, lifting this isomorphism to $Q$ we obtain $A \cong \talg{V}/(W)$ with $V = A_{\geqslant 1}$ and $W=V \otimes V$. This presentation satisfies the conditions of \cref{koszul-ai-presentation}. Further, the data $(A,V,W)$ is a strictly Koszul presentation, and the Priddy coalgebra of $\phi$ is the bar construction $\priddy{V,W} = \bc{A}$. In this case the Priddy resolution of a module $M$ recovers the bar resolution $R \totimes \bc{A} \totimes G$ from \cite[Theorem 3.13]{Burke:2015}. Comparing \cref{c:lescot} with \cref{golod_bound}, the resolution is minimal if and only if $M$ is inert with respect to $\phi$, if and only if $M$ is a $\phi$-Golod module (i.e.\@ the Serre bound \cref{golod_bound} is an equality), by \cref{t_inert}.

\begin{remark}\label{r_no_dg_structure}There are many examples of Golod (in particular, Koszul) homomorphisms $\phi\colon Q\to R$ such that the minimal resolution $A$ of $R$ over $Q$ does not admit a dg $Q$-algebra structure. In fact, this behavior seems to be typical. One way to construct them is as follows.

Let $I$ be an ideal in a local ring $P$, and consider the map $\phi\colon Q\to R$ with 
\[
Q=P[x]_{(\fm_P,x)} \quad \text{and}\quad R=Q/(x I)\,.
\]
By \cite[Theorem~2.4]{Levin:1976}, see also \cite{Shamash:1969}, $\phi$ is Golod. If $B$ is the minimal resolution of $P/I$ over $P$, then the minimal resolution $A$ of $R$ over $Q$ can be described as
\[
A_i=B_i\otimes_PQ, \quad \text{with}\quad \partial^{A}_1=x\cdot\partial^B_1\otimes_PQ \quad \text{and}\quad \partial^{A}_{\geqslant 2}=\partial^B_{\geqslant 2}\otimes_PQ\,.
\]
If $A$ were to admit a dg $Q$-algebra structure, then localizing would produce a dg $P(x)$-algebra structure on
\[
 A\otimes_Q Q_{(\fm_P[x])} \cong B\otimes_P P(x)\,,
\]
and this is a minimal resolution of $P(x)/I(x)$ over $P(x)$. 

However, we can start with examples of $P$ and $I$ such that this is impossible. To be concrete, \cref{e:koszul:nonexample} (replacing $k$ with $k(x)$) shows that there is no such dg algebra structure when 
$P(x)=k(x)\llbracket a,b,c,d\rrbracket$ and $I(x)=(a^2,ab,bc,cd,d^2)$. 
It follows that the homomorphism
\[
\phi\colon k\llbracket a,b,c,d,x\rrbracket \longrightarrow \dfrac{k\llbracket a,b,c,d,x\rrbracket}{(a^2x,abx,bcx,cdx,d^2x)}
\]
is Golod and there is no dg algebra structure on the minimal resolution of the target over the source.
\end{remark}
\end{example}

\begin{example}[Gorenstein homomorphisms of projective dimension $3$] \label{e:koszul-ai:buchsbaum-eisenbud} \hfill Assume 
that $\phi \colon Q \to R$ is a surjective local Gorenstein map of $\pdim_Q(R) = 3$. In \cref{e:koszul:buchsbaum-eisenbud} the dg algebra resolution $A$ of $R$ is described, with bases $\{e_i\}$, $\{f_i\}$ and $\{g\}$ for $A_1$, $A_2$ and $A_3$ respectively. The multiplication induces a perfect pairing
\[
\langle -,-\rangle \colon A\otimes A \longrightarrow \susp^3A_3 \cong \susp^3Q
\]
that makes $A$ a cyclic $\ai$-algebra. We take $V=A_1 \oplus A_2 $ and $W = \ker(\langle -,- \rangle|_{V\otimes V})$; alternatively, $W$ is freely spanned as a graded $Q$-module by
\[
\{e_i \otimes e_j, f_i \otimes f_j, e_i \otimes f_i - f_j \otimes e_j\}_{i,j} \cup \{e_i \otimes f_j, f_i \otimes e_j\}_{i \neq j}\,.
\]

The short Gorenstein description of $A \otimes_Q k$ lifts to an isomorphism of graded $Q$-modules $A \cong \tmod{V}/(W)$ satisfying the conditions of \cref{koszul-ai-presentation}. 

Using the explicit description in \cref{e:koszul:buchsbaum-eisenbud}
\begin{equation*}
\bar{m}_1(V) \subseteq A_1 \subseteq V \,,\quad \bar{m}_2(W) \subseteq A_2 \subseteq V \quad\text{and}\quad \bar{m}_n = 0 \quad\text{for } n \geqslant 3\,.
\end{equation*}
Therefore the homomorphism $\phi$ is strictly Koszul, using the presentation $(A,V,W)$. The corresponding Priddy coalgebra is given by
\[
 \priddy[n]{V,W} = \Big\{{\textstyle{\sum} v_1 \otimes \cdots \otimes v_n} ~\Big|~\textstyle{\sum}  v_1 \otimes \cdots \otimes\langle v_i, v_{i+1} \rangle v_{i+2} \otimes \cdots \otimes v_{n}= 0,\ 1 \leqslant i < n\Big\}.
 \]
Alternatively, $\priddy{V,W}$ can be described explicitly using the basis of $W$ above. 
We also note that the Priddy coalgebra is dual to the non-commutative hypersurface
\begin{equation*}
\priddy{V,W}^\vee \cong \talg{V^\vee}/( \rho )\,,
\end{equation*}
where $\rho = e_1^\vee \otimes f_1^\vee + f_1^\vee \otimes e_1^\vee + \cdots + e_r^\vee \otimes f_r^\vee + f_r^\vee \otimes e_r^\vee$.
\end{example}

\begin{remark}
In \cite[Example~3.10]{Burke:2015}, Burke examines the specific Gorenstein ring $R=Q/I$, of codimension three, where
\[
Q=k\llbracket x,y,z\rrbracket\quad\text{and} \quad I =(x^2,yz,xy+z^2,xz,y^2)\,.
\]
In particular, Burke explicitly computes the $\ai$-module $A$-structure on $K^Q$ and uses this to obtain the (non-minimal) bar resolution $R \totimes \bc{A} \totimes K^Q$ of $k$. In comparison, by \cref{priddy-resolution} the Priddy resolution $R \totimes \priddy{V,W} \totimes K^Q$ of $k$, with respect to $(A,V,W)$ in \cref{e:koszul-ai:buchsbaum-eisenbud}, is minimal by \cref{t_inert}.
\end{remark}

\begin{remark}
A similar argument to \cref{e:koszul-ai:buchsbaum-eisenbud} shows that if $\phi\colon Q\to R$ is a minimal Cohen presentation for an almost Golod Gorenstein ring, and if the minimal $Q$-free resolution of $R$ admits a dg algebra structure,  
then $\phi$ is strictly Koszul. The minimal resolution is known in the case of a compressed artinian Gorenstein ring \cite{Miller/Rahmati:2018}, and it is suspected to carry a dg algebra structure.

In \cref{t_golod_gor_strict} we will generalize this substantially, showing that it is only necessary for the minimal resolution to admit a cyclic $\ai$-algebra resolution.
\end{remark} 

\subsection{Complete intersection homomorphisms} \label{e:special-koszul:ci}

Next we show that surjective complete intersection homomorphisms are strictly Koszul, and that the resulting Priddy resolution recovers a well known construction of Eisenbud \cite{Eisenbud:1980} and Shamash \cite{Shamash:1969}. The latter uses \emph{systems of higher homotopies} 
to obtain free resolutions over the target of a surjective complete intersection homomorphism, starting from data over the source. We first recall this story, which provides context for some of the results in this subsection. We then proceed to verify that such maps are strictly Koszul, and conclude by laying out the connection between $\ai$-structures and systems of higher homotopies.

In what follows, we return to the setting of \cref{e:koszul:ci}. Namely, $\phi\colon Q\to R$ is a surjective, local homomorphism where $\ker\phi$ is generated by a $Q$-regular sequence $\bm{f}=f_1,\ldots,f_c$, and $A=\Kos^Q(\bm{f})$.

\begin{chunk}\label{c:higher_homotopies}
Let $M$ be an $R$-module and $ G \to M$ a free resolution over $Q$. A \emph{system of higher homotopies}, corresponding to $\bm{f}$, on $G$ is a collection of maps $\sigma^{(\bmalpha)} \colon G \to G$, one for each $\bmalpha \in \BN_0^c$,
of degree $2 |\bmalpha|-1$ such that:
\begin{enumerate}
\item \label{highhom_1}$\sigma^{({\bf{0}})} = \partial^G$ where ${\bf{0} }= (0, \ldots, 0)$;
\item \label{highhom_2} $\sigma^{({\bf{0}})} \sigma^{({\bf{e}}_i)} + \sigma^{({\bf{e}}_i)} \sigma^{({\bf{0}})} = f_i \id_G$ where ${\bf{e}}_i = (0, \ldots, 0,1,0, \ldots 0)$; 
\item \label{highhom_3}for any $\bmalpha \in \BN_0^c$ with $|\bmalpha| > 1$ one has
$
\sum_{\bmalpha = \bmbeta+\bmgamma} \sigma^{(\bmbeta)} \sigma^{(\bmgamma)} = 0
$.
\end{enumerate}
Such a system of maps always exists by \cite[Section~7]{Eisenbud:1980}. The utility of this data is summarized in the following construction: if $D$ denotes the graded $Q$-linear dual of $Q[\chi_1,\ldots,\chi_c]$, where each $\chi_i$ has homological degree $-2$, then the $R$-complex $R\otimes D \otimes G$ with differential $\sum_{\bmalpha\in \BN_0^c}1\otimes \bmchi^{\bmalpha}\otimes \sigma^{(\bmalpha)}$ is a free resolution of $M$ over $R$; see \cite[Section~7]{Eisenbud:1980} for more details.

When $M$ is an $R$-complex, one can take $\epsilon\colon G\to M$ to be a semifree resolution over $Q$ and impose also the following condition to obtain analogous results:
\begin{enumerate}
\setcounter{enumi}{3}
\item \label{highhom_4} $\epsilon \sigma^{(\bmalpha)} = 0$ for $|\bmalpha| > 0$\,.
\end{enumerate}
Such a system of maps exists and can be used to transfer semifree resolutions over $Q$ to ones over $R$, by an argument similar to the classical one in \cite{Eisenbud:1980}; this will be contained in future joint work of Grifo with the first and fourth author.
\end{chunk}

\begin{remark}\label{r_jesse_ci}
Let $M$ be an $R$-module and $ G \to M$ a free resolution over $Q$. In \cite{Burke:2015}, Burke notes that when $c=1$ an $\ai$-module structure on $G$ over $A$ is equivalent to a system of higher homotopies on $G$. Furthermore the bar resolution of $M$ in Burke's paper agrees with the Priddy resolution of $M$, introduced above. Such maps are also Golod, and so we are also in the setting of \cref{e:special-koszul:golod}.

For arbitrary codimension $c$ the bar resolution is not minimal. However, in unpublished work, Burke constructs an acyclic twisting cochain $D\to A$ and uses this to transfer a semifree resolution of an $R$-complex $M$ over $Q$ to one over $R$ that agrees with the construction of Eisenbud and Shamash; cf.\@ \cref{c:higher_homotopies} (see also \cite{Avramov/Buchweitz:2000a,Martin:2021}). The connection between higher homotopies and $\ai$-structures is also implicit in Burke's work. We will give an explicit description of how these structures relate in \cref{higher_homotopies_ai_structure}.
\end{remark}

\begin{chunk} \label{ci_strictly_kos}
The narrative above is subsumed by the one in this article. Specifically, the dg algebra resolution $A = \Kos^Q(\bmf)$ of $R$ over $Q$ has a quadratic presentation $\talg{V}/(W)$, with
\begin{equation*}
V = A_1 = \susp Q^c \quad\text{and}\quad W = \left\langle\{ a\otimes a\}_{a\in A_1}\cup \{a\otimes b+b\otimes a\}_{a,b\in A_1}\right\rangle \subseteq V^{\otimes 2}\,.
\end{equation*}
The graded module $V$ is concentrated in degree $1$, and the weight and homological gradings agree. It is straightforward to check that this presentation satisfies the conditions of \cref{koszul-ai-presentation}. By construction,
\begin{equation}\label{Kos_complex_ai_structure}
\bar{m}_1(V) =0 \,,\quad \bar{m}_2(W)=0\quad\text{and}\quad \bar{m}_n = 0 \quad\text{for } n \geqslant 3\,.
\end{equation}
Hence $\phi$ is strictly Koszul.
\end{chunk}

To conclude that the constructions in \cref{c:higher_homotopies,r_jesse_ci} are recovered by \cref{priddy-resolution}, we end this subsection with the following analysis. 

\begin{chunk} \label{symmetric-tensors} 
Let $\symgrp{n}$ be the symmetric group. For $\bmalpha = (\alpha_i) \in \BN_0^{c}$ with $|\bmalpha|=n$ we let
\begin{equation*}
\symgrp{\bmalpha} \coloneqq \set{\tau \in \symgrp{n}}{\tau(\alpha_1+\cdots+\alpha_i+1) < \cdots < \tau(\alpha_1+\cdots+\alpha_{i+1}) \text{ for } 0\leqslant i\leqslant c-1}
\end{equation*}
denote the subgroup of \emph{$\bmalpha$-shuffles} \cite[Chapter~IV, \S5.3]{Bourbaki:1981}.

The symmetric group $\symgrp{n}$ acts on $(\susp V)^{\otimes n}$ by permuting simple tensors, there are no signs appearing since $\susp V$ is in degree $2$. The \emph{module of symmetric tensors on $\susp V$} is the graded module $\stmod{\susp V} =\bigoplus_{n\geqslant 0} \stmod[n]{\susp V}$, with
\[
\stmod[n]{\susp V} \coloneqq \tmod[n]{\susp V}^{\symgrp{n}}\,.
\]
The coalgebra structure on $\tcoa{\susp V}$ restricts to a coalgebra structure on $\stmod{\susp V}$. We call this the \emph{coalgebra of symmetric tensors on $\susp V$} and denote it by $\stcoa{\susp V}$. 

We denote the basis of $\susp V = \susp^2 Q^c$ corresponding to $f_1, \ldots, f_c$ by $y_1, \ldots, y_c$. A basis of $\stmod{\susp V}$ is given by
\begin{equation*}
y^{(\bmalpha)} \coloneqq \sum_{\tau\in\symgrp{\bmalpha}}\tau\cdot(y_1^{\otimes \alpha_1}\otimes \cdots \otimes y_c^{\otimes \alpha_c}) \in \stcoa[|\bmalpha|]{\susp V}\,.
\end{equation*}
\end{chunk}

\begin{theorem}
\label{t_ci} \label{higher_homotopies_ai_structure}
 Let $\phi \colon Q \to R$ be a surjective complete intersection homomorphism with kernel generated by a $Q$-regular sequence $\bm{f}=f_1, \ldots, f_c$, and let $M$ denote an $R$-complex.
 \begin{enumerate}
 \item\label{t_ci_priddy} $\phi$ is strictly Koszul and its Priddy coalgebra is the curved coalgebra of symmetric tensors $\stcoa{\susp^2 Q^c}$, with curvature term $(f_1, \ldots, f_c) \colon \susp^2 Q^c \to Q$.
 \item \label{t_ci_homotopies} Given a semifree resolution $G\to M$ over $Q$ there exists a strictly unital $\ai$-module structure $\{m_n^G\}$ over $A=\Kos^Q(\bmf)$ making $G\to M$ a strict morphism of $\ai$-modules over $A$, where $A$ acts on $M$ via restricting scalars along the dg algebra map $A\to R$. Then setting
\[ 
\sigma^{(\bmalpha )} \coloneqq (-1)^{\frac{|\bmalpha|(|\bmalpha|-1)}{2}} m^G_{|\bmalpha|+1}((\shift^{-1})^{\otimes |\bmalpha|}y^{(\bmalpha)}\otimes\id)
\]
for $\bmalpha \in \BN_0^{c}$ and $y_1,\ldots,y_c$ the standard basis for $\susp^2Q^c$ defines a system of higher homotopies on $G$ corresponding to $\bmf$. 
\item \label{t_ci_resolution} Moreover, the Priddy resolution from \cref{priddy-resolution} recovers the Eisenbud--Shamash resolution described in \cref{c:higher_homotopies}.
\end{enumerate}
\end{theorem}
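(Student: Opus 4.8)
The natural order of attack is to establish \cref{t_ci_priddy} first, then to compute the differential of the Priddy resolution — which simultaneously settles \cref{t_ci_resolution} — and finally to read off the higher homotopy identities \cref{t_ci_homotopies} from the relation $(\partial^\tau)^2 = 0$. For \cref{t_ci_priddy}, the starting point is \cref{ci_strictly_kos}, which already records that $(A,V,W)$ with $A = \Kos^Q(\bmf)$, $V = A_1 \cong \susp Q^c$ and $W = \big\langle\{a\otimes a\}\cup\{a\otimes b+b\otimes a\}\big\rangle$ is a strictly Koszul presentation of $\phi$. Since $V$ is concentrated in homological degree $1$, the module $\susp V$ lives in degree $2$, so $\symgrp{n}$ acts on $(\susp V)^{\otimes n}$ without Koszul signs; and $W$ is exactly the submodule $(V\otimes V)^{\symgrp{2}}$ of symmetric $2$-tensors, whence $\susp^2 W = \stmod[2]{\susp V}$. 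Therefore
\[
\priddy[n]{V,W} = \bigcap_{i+2+j=n}(\susp V)^{\otimes i}\otimes\stmod[2]{\susp V}\otimes(\susp V)^{\otimes j} = \stmod[n]{\susp V}\,,
\]
using that a tensor fixed by every adjacent transposition is $\symgrp{n}$-invariant (the adjacent transpositions generate $\symgrp{n}$), and the counit and comultiplication restrict to those of $\stcoa{\susp V}$, so $\priddy{V,W} = \stcoa{\susp^2 Q^c}$. The curvature of $\bc{A}$ has the single component $h_1\shift^{-1}$ on $\bc[1]{A}$, and for the Koszul complex $h_1$ is determined by $\partial(e_i) = f_i$, i.e.\@ $h_1 = (f_1,\dots,f_c)\colon A_1\to Q$ and zero in higher degrees; under $\susp V\cong\susp^2 Q^c$ this is precisely the asserted curvature. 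Dually, \cref{dual_priddy} gives $\priddy{V,W}^\vee = \talg{\susp^{-2}Q^c}/\big(\textstyle\bigwedge^2\susp^{-2}Q^c\big) = Q[\chi_1,\dots,\chi_c]$ with $|\chi_i| = -2$; as everything is degreewise finite, dualizing again identifies $\priddy{V,W}$ with the module $D$ of \cref{c:higher_homotopies}, the identification matching the monomial dual basis of $D$ with the symmetric tensors $y^{(\bmalpha)}$ of \cref{symmetric-tensors}.

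For \cref{t_ci_resolution}, apply \cref{c_ai_module} to obtain a strictly unital $\ai$-module structure $\{m_n^G\}$ over $A$ on $G$ with $G\to M$ strict, and compute the differential $\partial^\tau$ of $R\totimes\priddy{V,W}\totimes G$ using \cref{priddy-resolution}. Because $R$ has zero differential and the coderivation of $\priddy{V,W}$ vanishes on $\stcoa{\susp V}$ — since $\bar m_n^A$ vanishes on $A_1$, on $W$, and in arities $\geqslant 3$ — the operator $\partial^\tau$ collapses to $\id_R\otimes\id\otimes\partial^G$ plus the twisting term. Evaluating the twisting term with the deconcatenation coproduct, which on $\stcoa{\susp V}$ reads $\Delta(y^{(\bmalpha)}) = \sum_{\bmbeta+\bmgamma=\bmalpha}y^{(\bmbeta)}\otimes y^{(\bmgamma)}$, and with the action of $\langle y^{(\bmgamma)}\rangle$ on $G$ computed through \cref{ai-mod-dg-mod-cobar-bar}, which equals $-\sigma^{(\bmgamma)}$ by the very definition of $\sigma^{(\bmgamma)}$, one obtains under the identification $\priddy{V,W} = D$ exactly the Eisenbud--Shamash differential $\sum_{\bmalpha}1\otimes\bmchi^{\bmalpha}\otimes\sigma^{(\bmalpha)}$ of \cref{c:higher_homotopies}.

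\cref{t_ci_homotopies} then follows formally: \cref{priddy-resolution} guarantees $(\partial^\tau)^2 = 0$, and expanding this on $R\otimes\priddy{V,W}\otimes G$ and separating by $\priddy{V,W}$-degree yields exactly axioms \cref{highhom_1,highhom_2,highhom_3} — the scalar $f_i$ in \cref{highhom_2} being the contribution of the curvature component $(f_1,\dots,f_c)$ — while axiom \cref{highhom_4} is precisely strictness of $G\to M$. (Alternatively, these identities can be derived directly from the $\ai$-module Stasheff identities of \cref{ai-module} specialized to $A = \Kos^Q(\bmf)$, where the algebra operations $\bar m_s^A$ contribute only through $h_1$.) The main obstacle throughout is sign bookkeeping together with the shuffle combinatorics of \cref{symmetric-tensors}: one must match the bar-construction signs of \cref{priddy-resolution} — the factors $(-1)^{s(s+1)/2}$ and $(-1)^{(j-1)(j-2)/2}$, the Koszul signs of $\shift^{\pm1}$, and the sign $(-1)^{|\bmalpha|(|\bmalpha|-1)/2}$ built into $\sigma^{(\bmalpha)}$ — against the classical divided-power conventions of Eisenbud and Shamash, and verify that the $\symgrp{\bmalpha}$-shuffle sums collapse correctly to sums over decompositions $\bmalpha = \bmbeta+\bmgamma$.
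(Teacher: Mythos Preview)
Your treatment of \cref{t_ci_priddy} and the formal identification of differentials in \cref{t_ci_resolution} are correct and follow the paper's line closely. The divergence is in \cref{t_ci_homotopies}: the paper establishes the higher homotopy identities \emph{first}, by direct appeal to the $\ai$-module Stasheff identities, and only then matches the Priddy and Eisenbud--Shamash differentials; you reverse this order, deriving the identities from $(\partial^\tau)^2=0$ on $R\totimes\priddy{V,W}\totimes G$.

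That reversal has a genuine gap. The curvature $(f_1,\dots,f_c)$ does \emph{not} appear in the twisted tensor product differential: in the formula of \cref{priddy-resolution} the first sum vanishes entirely on $\stcoa{\susp V}$ (all $\bar m_s^A$ are zero there) and the second sum involves only the $\bar m_j^G$, while the left twisting by $R$ is zero for degree reasons. So $\partial^\tau$ really is $\sum_{\bmalpha}1\otimes\bmchi^{\bmalpha}\otimes\sigma^{(\bmalpha)}$, with no curvature correction. Expanding $(\partial^\tau)^2=0$ then yields
\[
\textstyle 1\otimes\bmchi^{\bmgamma}d\otimes\big(\sum_{\bmbeta+\bmgamma'=\bmgamma}\sigma^{(\bmbeta)}\sigma^{(\bmgamma')}\big)(g)=0\quad\text{in }R\otimes D\otimes G\,,
\]
which only says $\sum_{\bmbeta+\bmgamma'=\bmgamma}\sigma^{(\bmbeta)}\sigma^{(\bmgamma')}$ lands in $(f_1,\dots,f_c)G$. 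Since $f_i=0$ in $R$, you cannot recover the key identity $\sigma^{(\mathbf 0)}\sigma^{(\mathbf e_i)}+\sigma^{(\mathbf e_i)}\sigma^{(\mathbf 0)}=f_i\id_G$ over $Q$, nor the vanishing for $|\bmgamma|>1$; you only get them modulo the defining ideal. The curvature is encoded in the Stasheff identities for $\{m_n^G\}$ (through the $h_1$ term of \cref{ai-alg-split-unit} and \cref{ai-module}), not in $\partial^\tau$ itself, so the information must be extracted before passing to $R$. Your parenthetical alternative---deriving the identities directly from the $\ai$-module Stasheff relations, using strict unitality and graded-commutativity of $A$---is exactly the paper's approach, and is what is needed to close the gap.
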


\begin{proof}
We saw in \cref{ci_strictly_kos} that $\phi$ is strictly Koszul. Using the same notation we first show that $\priddy{V,W} = \stmod{\susp V}$. Indeed for the nontrivial element $\tau \in \symgrp{2}$ one has
\begin{equation*}
\susp^2 W = \ker\left(\susp V \otimes \susp V \xra{\id-\tau} \susp V \otimes \susp V\right) = \stmod[2]{\susp V}\,,
\end{equation*}
and so using the transposition $\tau_{i} = (i \ \ i+1) \in \symgrp{n}$ we obtain
\begin{equation*}
(\susp V)^{\otimes i-1} \otimes \susp^2 W \otimes (\susp V)^{\otimes n-i-1} = \ker\left((\susp V)^{\otimes n} \xra{\tau_{i} - \id} (\susp V)^{\otimes n}\right).
\end{equation*}
Since $\symgrp{n}$ is generated by the transpositions $\tau_i$, it follows that $\priddy{V,W} = \stmod{\susp V}$. 

The coalgebra structure on $\priddy{V,W}$ is inherited from $\bc{A}$, and this coincides with the coalgebra structure on $\tcoa{\susp V}$ because of the compatible inclusions
\begin{equation*}
\priddy{V,W} = \stcoa{\susp V} \subseteq \tcoa{\susp V} \subseteq \tcoa{\susp\bar{A}}=\bc{A}\,.
\end{equation*}
The differential on $\priddy{V,W}$ is zero by \cref{Kos_complex_ai_structure}. It is straightforward to see that the curvature term on $\priddy{V,W}$ is (up to a shift) the first differential of $A$. This completes the proof of (\cref{t_ci_priddy}).

For (\cref{t_ci_homotopies}), such an $\ai$-module structure making $G \to M$ a strict morphism exists by \cref{c_ai_module}. Then, by definition \cref{c:higher_homotopies}(\cref{highhom_1}) holds. The fact that \cref{c:higher_homotopies}(\cref{highhom_2}) holds follows from using the second Stasheff identity from \cref{ai-module}. Another computation using the Stasheff identities, the unital structure on $G$, and the fact that $A$ is graded-commutative show that \cref{c:higher_homotopies}(\cref{highhom_3}) holds. The verification of \cref{c:higher_homotopies}(\cref{highhom_3}) for $\bmalpha=\textbf{e}_1+\textbf{e}_2$ is illustrative of the proof for the general case and so we sketch this case below. 

Fix basis elements $e_i\in A_1$ with $\partial^A(e_i)=f_i,$ and note that 
\begin{equation}
\label{e_shuffle}
(\shift^{-1})^{\otimes 2} y^{(\bmalpha)}=e_1\otimes e_2+e_2\otimes e_1\,.
\end{equation}
Observe that for $\{i,j\}=\{1,2\}$ one has
\begin{align*}
m_2^G&(e_i\otimes m^G_2(e_j\otimes\id)-(e_i\cdot e_j)\otimes\id)\\
&=\partial^G m_3^G(e_i\otimes e_j\otimes \id)
+m_3^G(f_i \otimes e_j\otimes \id-e_j\otimes f_i \otimes\id+ e_i \otimes e_j \otimes\partial^G )\\
&=\partial^G m_3^G(e_i\otimes e_j\otimes \id)
+m_3^G(e_i \otimes e_j \otimes\id )\partial^G\,,
\end{align*}
where the first precomposes the third Stasheff identity from \cref{ai-module} with $e_i\otimes e_j\otimes \id$, while the third equality uses that $\{m_n^G\}$ is a strictly unital $\ai$-module structure. It follows that 
\[
\partial^G m_3^G(e_i\otimes e_j\otimes \id) -m_2^G(e_i\otimes m^G_2(e_j\otimes\id))+m^G_2(e_i\cdot e_j\otimes\id)
+m_3^G(e_i \otimes e_j \otimes\id )\partial^G=0\,,
\]
and so adding these expressions for $(i,j)=(1,2)$ and $(i,j)=(2,1)$, and recalling \cref{e_shuffle}, we obtain
\[
\sum_{\bmbeta+\bmgamma=\bmalpha}\sigma^{(\bmbeta)} \sigma^{(\bmgamma)} +m_2^G(e_1\cdot e_2\otimes \id)+m_2^G(e_2\cdot e_1\otimes\id)=0\,.
\label{e_homotopies_ci}
\]
It now remains to observe that since $A$ is graded-commutative 
\[
m_2^G(e_1\cdot e_2\otimes \id)+m_2^G(e_2\cdot e_1\otimes\id)=m_2^G((e_1\cdot e_2+e_2\cdot e_1)\otimes \id)=0\,.
\]
Thus \cref{c:higher_homotopies}(\cref{highhom_3}) holds for $\bmalpha=\textbf{e}_1+\textbf{e}_2$.

The condition \cref{c:higher_homotopies}(\cref{highhom_4}) holds since $\epsilon$ is a strict morphism, and since $M$ is a dg $A$-module where the $e_i$'s act trivially. This completes the proof of (\cref{t_ci_homotopies}). 

It remains to show (\cref{t_ci_resolution}). By \cite[IV.\S.5.11]{Bourbaki:1981} we have a natural isomorphism of algebras
\begin{equation*}
Q[\chi_1,\ldots,\chi_c] \cong \stcoa{\susp^2 Q^c}^\vee =\priddy{V,W}^\vee
\end{equation*}
determined by $\chi_i\mapsto y_i^\vee;$
this correspondence can also be seen via \cref{priddy-dual}:
\begin{equation*}
\priddy{V,W}^\vee \cong \talg{\susp^{-2} (Q^c)^\vee}/(\susp^{-1} W^\perp) = \operatorname{Sym}(\susp^{-2} (Q^c)^\vee)\cong Q[\chi_1,\ldots,\chi_n]
\end{equation*}
where $W^\perp = \set{f \otimes g - g \otimes f}{f,g \in \susp^{-1} (Q^c)^\vee}$, identifying
$(y^{(\bmalpha)})^\vee$ with $\bm{\chi}^{\bmalpha}$ for each $\bmalpha\in \BN_0^c$. As a consequence, dualizing the correspondence above yields an isomorphism of graded $Q$-modules $D\cong \priddy{V,W}$ inducing an isomorphism of graded $R$-modules 
\[
\psi\colon R\otimes D\otimes G\xra{\ \cong\ } R\otimes \mathsf{C}(V,W)\otimes G\,.
\]
It remains to observe that 
\begin{equation*}
\psi \circ \sum_{|\bmalpha| = n-1} \bmchi^{\bmalpha} \otimes \sigma^{(\bmalpha)}=(-1)^{\frac{(n-1)(n-2)}{2}} \left.\bar{m}_n^G((\shift^{-1})^{\otimes (n-1)} \otimes \id)\right|_{\stcoa[n-1]{\susp^2 Q^c} \otimes G} \,.
\end{equation*}
Therefore $\psi$ is compatible with the differentials of its target and source, and so it is an isomorphism of $R$-complexes; cf.\@ \cref{priddy-resolution} and \cref{c:higher_homotopies}. 
\end{proof}

\begin{remark}
The higher homotopies $\sigma^{(\bmalpha)}$ with $|\bmalpha|=n$ induce a $\stcoa{\susp^2 Q^c}$-comodule structure on $\stcoa{\susp^2 Q^c} \otimes G$; in fact conditions \cref{c:higher_homotopies}(\cref{highhom_1,highhom_2,highhom_3,highhom_4}) are equivalent to this. On the other hand an $\ai$-module structure on $G$ is equivalent to a $\bc{A}$-comodule structure on $\bc{A} \otimes G$. Hence a system of higher homotopies on $G$ captures the `symmetric' part of an $\ai$-module structure on $G$ over $A$.
\end{remark}

\begin{remark}
    Moving beyond finite projective dimension, complete intersection homomorphisms fit into the well-studied class of quasi-complete intersection homomorphisms; cf.\@ \cite{Avramov/Henriques/Sega:2013,Avramov/Iyengar:2003}. In residual characteristic zero and two, it is straightforward to check that such maps are strictly Koszul; this provides more examples of strictly Koszul homomorphisms of infinite projective dimension. In odd characteristic the presence of divided powers prevents $\Tor QRk$  from admitting a quadratic presentation.
\end{remark}

\subsection{Almost Golod Gorenstein rings} \label{e:koszul-ai:minl-non-golod}

To end the paper, we return to the class of almost Golod Gorenstein rings that we studied in \cref{e:minl-non-golod}. We show that these rings are strictly Cohen Koszul, i.e.\@ every Cohen presentation is a strictly Koszul map, and we thereby obtain concrete free resolutions for all modules over such rings, using the machinery developed in \cref{sec:special-koszul}.

The next lemma is a general construction in the homological algebra of Gorenstein rings, building on work of Avramov and Levin \cite{Avramov/Levin:1978}. 

\begin{lemma}\label{l_socle_res}
Let $R$ be a zero dimensional Gorenstein ring of codimension $d$, with a minimal Cohen presentation $Q\to R$, and let $A\xrightarrow{\simeq}R$ be the minimal $Q$-free resolution of $R$. The inclusion of the socle lifts to a chain map
\[
\begin{tikzcd}[row sep=small]
 K^Q \ar[r,dashed]\ar[d,"\simeq"'] \& A\ar[d,"\simeq"]\\
 \soc(R) \ar[r] \& R
\end{tikzcd}
\]
where $K^Q$ is the Koszul complex of $Q$. The subcomplex
\[
A'\coloneqq\cone(K^Q_{< d} \to A_{< d}) \subseteq \cone(K^Q \to A)
\]
is then a minimal $Q$-free resolution of $R/\soc(R)$.
\end{lemma}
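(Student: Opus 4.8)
The plan is to analyze the mapping cone $\cone(K^Q\to A)$ and identify $A'$ inside it. First I would recall the setup: since $R$ is zero-dimensional Gorenstein of codimension $d$, the minimal resolution $A$ has length $d$ with $A_d\cong Q$, and $A\otimes_Q k=\Tor QRk$ is a Poincaré duality algebra, as in the discussion around \cref{eq_gorenstein_pairing}. The socle of $R$ is a one-dimensional $k$-vector space, and the quotient $R\to R/\soc(R)$ has kernel $\soc(R)\cong k$. The Koszul complex $K^Q$ is the minimal $Q$-free resolution of $k=Q/\fm_Q$ (here using that $Q$ is regular), so the inclusion $\soc(R)\hookrightarrow R$, viewed as a map $k\to R$ of $Q$-modules, lifts to a chain map $\iota\colon K^Q\to A$ by the standard comparison theorem for projective resolutions; this is minimal, meaning $\iota(K^Q)\subseteq\fm_Q A$, except possibly in degree $d$. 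In fact in degree $d$ the map $K^Q_d\cong Q\to A_d\cong Q$ is multiplication by a unit: dualizing, this map computes the pairing $\soc(R)\times\Tor_d\to k$, which is the Poincaré duality pairing $\Tor_0\times\Tor_d\to\Tor_d$ and hence an isomorphism. After rescaling we may take $\iota_d=\id_Q$.

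Next I would set up the mapping cone. There is a short exact sequence of complexes $0\to \soc(R)\to R\to R/\soc(R)\to 0$, and since $\iota$ lifts the inclusion $\soc(R)\to R$, the cone $\cone(K^Q\xrightarrow{\iota}A)$ is a $Q$-free complex quasi-isomorphic to $R/\soc(R)$ — indeed, the long exact sequence comparing $\cone(\iota)$ with the mapping cone of $\soc(R)\to R$ shows $\H_0(\cone\iota)=R/\soc(R)$ and $\H_i(\cone\iota)=0$ for $i>0$, using that both $K^Q$ and $A$ are resolutions. So $\cone(\iota)\to R/\soc(R)$ is a $Q$-free resolution, though generally not minimal: the degree-$d$ part of $\iota$, being an isomorphism, cancels $A_d$ against $(\susp K^Q)_d=K^Q_{d-1}$, and there may be further cancellation forced by $\iota$ in degrees near $d$. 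The key observation is that truncating $K^Q$ below degree $d$ and taking $A'=\cone(K^Q_{<d}\to A_{<d})$ removes precisely the acyclic tail. Concretely, $\cone(\iota)$ in degree $n$ is $A_n\oplus K^Q_{n-1}$; for $n<d$ this equals $A'_n$ by definition, for $n=d$ it is $A_d\oplus K^Q_{d-1}=Q\oplus K^Q_{d-1}$, and for $n=d+1$ it is $K^Q_d=Q$. I would show the inclusion $A'\hookrightarrow\cone(\iota)$ is a quasi-isomorphism by checking the quotient $\cone(\iota)/A'$ is acyclic: in degrees $>d$ this quotient is $K^Q_{\bullet-1}$ truncated, i.e.\ $0\to K^Q_d\to K^Q_{d-1}/(\text{image})\to 0$ concentrated in degrees $d$ and $d+1$, and the relevant differential is (part of) the Koszul differential $K^Q_d\to K^Q_{d-1}$, which is injective with the right cokernel so that the quotient complex is exact. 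Thus $A'$ is a $Q$-free resolution of $R/\soc(R)$.

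Finally I would verify minimality of $A'$. Avramov and Levin \cite{Avramov/Levin:1978} computed $\Tor Q{R/\soc(R)}k$, and by \cite[Theorem~1]{Avramov/Levin:1978} its Poincaré series over $Q$ is $\ps^Q_R(t)-t^d+t\,\ps^Q_k(t)-t^{d+1}$, equivalently $\rank_Q A'_i=\rank_k\Tor[i]{Q}{R/\soc(R)}{k}$ for the complex $A'$ built above; comparing this with $\rank_Q A'_i=\rank_Q A_i+\rank_Q K^Q_{i-1}$ for $i<d$ together with the cancellations in degree $\geqslant d$ accounts for exactly the right Betti numbers, forcing $\partial^{A'}(A')\subseteq\fm_Q A'$. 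Alternatively, and more cleanly, I would argue minimality directly: the differential of $A'$ is built from $\partial^A$, $\partial^{K^Q}$, and $\iota_{<d}$, all three of which have entries in $\fm_Q$ — $\partial^A$ because $A$ is the minimal resolution, $\partial^{K^Q}$ because $Q$ is regular and $K^Q$ is minimal, and $\iota_{<d}$ because the lift of $\soc(R)\subseteq\fm_R R$ to a chain map is minimal in degrees below $d$ (an entry that were a unit would split off a free summand, contradicting that $\soc(R)\to R$ factors through $\fm_R$, hence through $\fm_Q A$ after resolving). Therefore $\partial^{A'}\otimes_Q k=0$ and $A'$ is a minimal $Q$-free resolution of $R/\soc(R)$. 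The main obstacle I anticipate is the bookkeeping in degree $d$ and $d+1$: one must be careful that truncating at $A_{<d}$ rather than at $A_{\leqslant d}$ exactly cancels the contractible piece $K^Q_d\xrightarrow{\sim}A_d$ and leaves no stray homology; pinning down the exact form of the quotient complex $\cone(\iota)/A'$ and checking its acyclicity is where the real work lies, but it reduces to a short computation with the top of the Koszul complex.
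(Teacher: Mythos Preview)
Your approach matches the paper's: compute the homology of $\cone(\iota)$ via the long exact sequence, then show $A'\hookrightarrow\cone(\iota)$ is a quasi-isomorphism and that $A'$ is minimal. The paper simply cites \cite[Theorem~1]{Avramov/Levin:1978} for the two facts $\iota_d\otimes k$ is an isomorphism and $\iota_i\otimes k=0$ for $i<d$, and deduces everything from these.

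There are two slips in your execution. First, your description of the quotient $\cone(\iota)/A'$ is off: in degree $d$ one has $A'_d=0\oplus K^Q_{d-1}$ sitting inside $\cone(\iota)_d=A_d\oplus K^Q_{d-1}$, so the quotient is $A_d\cong Q$ in degree $d$ and $K^Q_d\cong Q$ in degree $d+1$, with differential (up to sign) equal to $\iota_d$ --- not the Koszul differential. Since you already argued $\iota_d$ is an isomorphism via Poincar\'e duality, acyclicity follows immediately once the quotient is identified correctly.

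Second, and more substantively, your ``cleaner'' direct argument for minimality does not go through. The observation that $\soc(R)\subseteq\fm_R$ only constrains $\iota_0$; it says nothing about $\iota_i$ for $0<i<d$. A unit entry in some $\iota_i$ with $0<i<d$ would not contradict the factorization through $\fm_R$ at the level of modules. The vanishing $\iota_i\otimes k=0$ for $i<d$ is genuinely the content of Avramov--Levin's result (it is the statement that $\Tor[i]{Q}{k}{k}\to\Tor[i]{Q}{R}{k}$ induced by the socle inclusion is zero for $i<d$), and is not elementary. Your Poincar\'e series argument via \cite[Theorem~1]{Avramov/Levin:1978} is correct and is effectively what the paper does; drop the direct alternative.
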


\begin{proof}
By the exact sequence of homology groups $\H_*(\cone(K^Q \to A))$ is isomorphic to $R/\soc(R)$, concentrated in degree zero. The proof of \cite[Theorem~1]{Avramov/Levin:1978} shows that the map $K^Q_i\otimes_Qk \to A_i\otimes_Qk$ is an isomorphism for $i=d$ and zero for $i<d$. The former fact implies that the inclusion $A' \subseteq \cone(K^Q \to A)$ is a quasi-isomorphism, and the later implies that $A'$ is minimal as a complex. Altogether this shows that $A'$ is the minimal resolution of $R/\soc(R)$. 
\end{proof}

\begin{lemma}\label{t_golod_gorenstein_koszul}
Let $R$ be an almost Golod Gorenstein ring of codimension $d$ having a minimal Cohen presentation $Q\to R$. Assume that the minimal $Q$-free resolution $A$ of $R$ is equipped with a cyclic $\ai$-structure. Then $(\bar{m}_n(\bar{A}^{\otimes n}))_i\subseteq \fm_Q \bar{A}_i$ for all $i<d$, and $(\bar{m}_n(\bar{A}^{\otimes n}))_d=0$ for $n \geqslant 3$. In particular $R\lotimes_Q k$ is formal and Koszul.
\end{lemma}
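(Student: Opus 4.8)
The plan is to handle the two integral assertions by different means — the degree-$d$ vanishing comes formally from the cyclic symmetry, while the vanishing in degrees $<d$ comes from comparing $A$ with the minimal resolution of the Golod ring $R/\soc(R)$ — and then to read off formality from \cref{p_formality}. First I would reduce to the case that $R$ is artinian: by definition $R$ is Cohen--Macaulay and $\bar R=R/(\bm x)$ is almost Golod artinian Gorenstein for a regular sequence $\bm x$ that is part of a minimal generating set of $\fm_R$. Lifting $\bm x$ to a regular sequence $\bm y\subseteq\fm_Q$, the map $Q/(\bm y)\to\bar R$ is a minimal Cohen presentation, $\bar A=A\otimes_Q Q/(\bm y)$ is the minimal $Q/(\bm y)$-free resolution of $\bar R$, and the Gorenstein pairing and cyclic $\ai$-structure on $A$ descend to $\bar A$ (perfectness surviving because $\bar A_d\cong Q/(\bm y)$). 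Since $(\bm y)\subseteq\fm_Q$, the statement in degrees $<d$ for $\bar A$ over $Q/(\bm y)$ yields it for $A$ over $Q$; the degree-$d$ statement I will prove directly on $A$; and formality transfers back by part~(1) of \cref{p_change_of_rings}. So from now on $R$ is artinian, whence $\soc(R)\cong k$.

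For the degree-$d$ vanishing, fix $n\geqslant 3$ and $a_1,\dots,a_n\in\bar A$ with $|\bar m_n(a_1,\dots,a_n)|=d$. Applying the cyclic symmetry identity with $a_{n+1}=1_A$, its right-hand side involves $m_n(a_2,\dots,a_n,1_A)$, which vanishes by strict unitality (\cref{ai-alg-strictly-unital}) since $n>2$; hence $\langle m_n(a_1,\dots,a_n),1_A\rangle=0$. As $\langle-,1_A\rangle$ restricts to the isomorphism $A_d\xra{\cong}\susp^d Q$ coming from the unital product $\mu$ of \cref{eq_gorenstein_pairing}, this forces $\bar m_n(a_1,\dots,a_n)=m_n(a_1,\dots,a_n)=0$.

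For degrees $<d$, let $A'$ be the minimal $Q$-free resolution of $R/\soc(R)$, equipped with a strictly unital $\ai$-structure making $A'\to R/\soc(R)$ a strict $\ai$-quasi-isomorphism (\cref{ai-along-qi}, as in \cref{e:special-koszul:golod}). Since $R/\soc(R)$ is Golod, $\Tor{Q}{R/\soc(R)}{k}\cong k\ltimes U$ with $U$ concentrated in positive degrees by \cref{e:koszul:golod}, so every $\ai$-structure on $A'$ satisfies $\bar m'_n\otimes_Q k=0$ for all $n$ (by \cite{Burke:2015} for $n\neq 2$, and because $U^2=0$ for $n=2$); that is, $A'\otimes_Q k$ is a trivial $\ai$-algebra. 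The composite $A\xra{\simeq}R\twoheadrightarrow R/\soc(R)$ is a morphism of strictly unital $\ai$-algebras, so \cref{p_Ainfinity_lifting} lifts it to an $\ai$-morphism $\psi\colon A\to A'$. Reducing modulo $\fm_Q$ gives an $\ai$-morphism $\psi\otimes_Q k\colon A\otimes_Q k\to A'\otimes_Q k$ into a trivial algebra whose linear part, $A$ and $A'$ being minimal, is the natural map $\Tor{Q}{R}{k}\to\Tor{Q}{R/\soc(R)}{k}$. From the long exact sequence of $\Tor{Q}{-}{k}$ attached to $0\to\soc(R)\to R\to R/\soc(R)\to 0$ together with the vanishing of $\Tor[i]{Q}{\soc(R)}{k}\to\Tor[i]{Q}{R}{k}$ for $i<d$ — established in the proof of \cref{l_socle_res}, i.e.\ \cite[Theorem~1]{Avramov/Levin:1978} — this linear part is injective in degrees $<d$. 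Then \cref{l_minimal_A_infty_map}, applied over the residue field $k$, yields $(\bar m_n(\bar A^{\otimes n}))_i\subseteq\fm_Q A_i$ for all $n$ and $i<d$.

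Combining the two integral assertions with the fact that $\bar A_i=0$ for $i>d$ gives $m_n\otimes_Q k=0$ for $n\geqslant 3$; with minimality of $A$ this says $A\otimes_Q k$ is the (short Gorenstein) graded algebra $\Tor{Q}{R}{k}$ with vanishing higher operations, so $R\lotimes_Q k$ is formal by \cref{p_formality} and Koszul because $\Tor{Q}{R}{k}$ is Koszul (\cref{e:koszul:punctured-golod}). I expect the main obstacle to be the degrees-$<d$ step — constructing the $\ai$-morphism $A\to A'$ and identifying its linearization with a $\Tor$-map that is injective in low degrees — since this is the point where Golodness of $R/\soc(R)$ (through Burke's vanishing) and the Gorenstein structure (through Avramov--Levin) are combined and fed into \cref{l_minimal_A_infty_map}; the degree-$d$ vanishing, by contrast, is immediate from cyclicity and strict unitality.
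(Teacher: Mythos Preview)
Your proof is correct and follows essentially the same approach as the paper: degree-$d$ vanishing from cyclicity and strict unitality, reduction to the artinian case, and comparison with the minimal resolution of the Golod ring $R/\soc(R)$ via an $\ai$-morphism whose linearization is injective in degrees $<d$, followed by \cref{l_minimal_A_infty_map}. The only cosmetic difference is that the paper builds the chain map $A\to A'$ explicitly from the cone splitting of \cref{l_socle_res} before extending it to an $\ai$-morphism, whereas you invoke \cref{p_Ainfinity_lifting} directly and read off injectivity of $\psi_1\otimes_Q k$ from the Tor long exact sequence together with Avramov--Levin.
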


\begin{proof}
We first address what happens in degree $d$, and for this we use the fact that $A$ is a cyclic $\ai$-algebra. If $n\geqslant 3$ and $m_n(a_1,\ldots,a_n)$ has degree $d$, then
 \[
 \langle m_n(a_1,\ldots,a_n), 1\rangle =(-1)^n \langle m_n(1,a_1,\ldots,a_{n-1}), a_n\rangle =0
 \]
 since $A$ is strictly unital. But $\langle -, 1\rangle$ is the projection onto the (rank $1$) degree $d$ part of $A$, so this implies $ m_n(a_1,\ldots,a_n)=0$.

 For the rest of the argument we need to reduce to the case that $R$ has dimension zero. We may find a sequence $\bm{x}$ that is part of a minimal generating set of $\fm_Q$, and that maps to a maximal regular sequence in $\fm_R$. 
 All of the hypotheses, and the remaining assertions to prove, are unchanged if we replace $Q$, $R$ and $A$ with $Q/(\bm{x})$, $R/(\bm{x}R)$ and $A\otimes (Q/(\bm{x}))$ respectively, using \cref{p_change_of_rings} for the Koszul conclusion. Therefore we may assume that $R$ has dimension zero.

 We now use the notation and results of \cref{l_socle_res}. Since $A'=\cone(K^Q_{< d} \to A_{< d})$ is the minimal $Q$-free resolution of $R/\soc(R)$ there is a splitting
 \[
 \begin{tikzcd}
 A'\ar[r,hook,"\simeq"] \& \cone(K^Q \to A) \ar[l, bend right, dashed],
 \end{tikzcd}
 \]
 and we define $\phi_1$ to be the composition $A\to \cone(K^Q \to A) \to A'$. By construction $(\phi_1)_i\colon A_i\to A'_i$ is a split injection for $i<d$. Since $R/\soc(R)$ is Golod we may endow $A'$ with a strictly unital $\ai$-structure $\{m_n'\}$ satisfying $\bar{m}'_n(\bar{A}'^{\otimes n})\subseteq \fm_Q \bar{A}'$ for all $n\geqslant 1$ by \cite[Theorem~6.13]{Burke:2015}. Having done this, the chain map $\phi_1$ can be extended to a strictly unital map of $\ai$-algebras using \cref{p_Ainfinity_lifting}. 
 We apply \cref{l_minimal_A_infty_map} to the morphism $A\otimes_Q k\to A'\otimes_Qk$ to deduce that the $\ai$-structure of $A$ satisfies $(\bar{m}_n(\bar{A}^{\otimes n}))_i\subseteq \fm_Q \bar{A}_i$ for all $n\geqslant 1$ and all $i<d$.
 
Since the induced higher $\ai$-structure on $A\otimes_Qk$ vanishes, $R\lotimes_Q k\simeq A\otimes_Qk$ is formal by \cref{p_formality}. It also follows that $A\otimes_Qk$ is a short Gorenstein ring, and in particular it is Koszul by \cref{c:short_gorenstein}.
\end{proof}

We are finally able to prove that almost Golod Gorenstein rings satisfying certain technical assumptions are strictly Cohen Koszul, as promised in the proof of \cref{t_almost_golod_gor_characterisation}, and substantially generalizing the class of Gorenstein local rings of codimension three covered by \cref{e:koszul-ai:buchsbaum-eisenbud}.

\begin{theorem}\label{t_golod_gor_strict}
If $R$ is an almost Golod Gorenstein ring of odd codimension $d$, containing a field of characteristic zero, with a minimal Cohen presentation $\phi\colon Q\to R$, then $\phi$ is strictly Koszul. More precisely, if the minimal resolution $A$ of $R$ 
admits a cyclic $\ai$-structure, then (regardless of $d$ or the characteristic) $A\cong \tmod{V}/{W}$ where
\begin{equation} \label{eq_golod_gor_koszul_presentation}
V={\textstyle \bigoplus_{i=1}^{d-1}A_i}\quad \text{and}\quad W=\ker\big( \langle -,-\rangle \colon V\otimes V\to \susp^dQ\big)\,,
\end{equation}
and $(A,V,W)$ is a strictly Koszul presentation for $\phi$.
\end{theorem}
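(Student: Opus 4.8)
The plan is to feed the cyclic $\ai$-structure produced by \cref{p_cyclic_ai} into the vanishing statements of \cref{t_golod_gorenstein_koszul}, and then to read off a presentation satisfying \cref{d:special-Koszul}. First I would dispose of the codimension one case: there $R$ is a hypersurface, in particular a complete intersection, and $\phi$ is strictly Koszul by \cref{e:special-koszul:ci}; so from now on assume $d \geqslant 3$. Under the hypotheses (characteristic zero, $d$ odd), \cref{p_cyclic_ai} equips the minimal $Q$-free resolution $A$ of $R$ with a split unital cyclic $\ai$-structure of degree $d$, for which $A \to R$ is a strict $\ai$-algebra quasi-isomorphism; this reduces the theorem to the ``more precisely'' assertion, which I prove using only such a structure.

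Next I would build the presentation. Let $V = \bigoplus_{i=1}^{d-1} A_i \subseteq \bar{A}$ and let $\langle-,-\rangle \colon A \otimes A \to \susp^d Q$ be the Gorenstein pairing of \cref{eq_gorenstein_pairing}; set $W = \ker(\langle-,-\rangle|_{V \otimes V})$. Because each component pairing $A_i \times A_{d-i} \to A_d \cong Q$ is perfect (Gorenstein duality, \cite{Avramov/Golod:1971}, together with Nakayama), the map $V \otimes V \to \susp^d Q$ is split surjective, so $W$ is a bigraded $Q$-module direct summand of $V \otimes V$, and $W \otimes_Q k = \ker(\bar{V} \otimes_k \bar{V} \to T_{(2)})$, where $T = \Tor{Q}{R}{k}$ carries the weight grading $T_{(0)} = k$, $T_{(1)} = \bigoplus_{i=1}^{d-1} T_i$, $T_{(2)} = T_d$ from \cref{e:koszul:punctured-golod} that makes it a short Gorenstein, hence Koszul, $k$-algebra (cf.\@ \cref{c:short_gorenstein}). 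Since $T$ is concentrated in weights $0,1,2$ and $W$ is a $Q$-summand of $V \otimes V$, a Nakayama argument shows $\tmod{V}/(W)$ is a degreewise finite free bigraded $Q$-module with $\tmod{V}/(W) \otimes_Q k \cong \tmod{\bar V}/(\bar W) = T$; comparing this with $A \otimes_Q k = T$, I may fix a bigraded $Q$-linear isomorphism $\tmod{V}/(W) \xra{\cong} A$ restricting to the identity on $V$ and reducing modulo $\fm_Q$ to an algebra isomorphism $\tmod{\bar V}/(\bar W) \cong T$. Transporting the $\ai$-structure of $A$ along this isomorphism, the conditions \cref{koszul-ai-presentation:Koszul,koszul-ai-presentation:modk-talg,koszul-ai-presentation:ai-resn} of \cref{koszul-ai-presentation} hold: \cref{koszul-ai-presentation:Koszul} is Koszulity of $T$; for \cref{koszul-ai-presentation:modk-talg} one has $m_1 \otimes_Q k = 0$ by minimality, $m_2 \otimes_Q k = \mu \otimes_Q k$ since $m_2 \otimes_Q k$ is the product on $\Tor{Q}{R}{k} = \tmod{\bar V}/(\bar W)$, and $m_n \otimes_Q k = 0$ for $n \geqslant 3$ by \cref{t_golod_gorenstein_koszul}, which gives $\bar{m}_n(\bar{A}^{\otimes n})_i \subseteq \fm_Q \bar{A}_i$ for $i < d$ and $\bar{m}_n(\bar{A}^{\otimes n})_d = 0$ for $n \geqslant 3$, while $\bar{A}_i = 0$ for $i > d$; and \cref{koszul-ai-presentation:ai-resn} is the strict quasi-isomorphism $A \to R$.

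It then remains to verify the strictness condition \cref{eq:special-Koszul}. The containment $\bar{m}_1(V) \subseteq V$ is immediate, since $\bar{m}_1 = \partial^A$ drops homological degree and $\bar{A}_0 = 0$. For $n = 2$, the cyclic structure from \cref{p_cyclic_ai} was constructed so that $m_2(a_1,a_2) = \langle a_1, a_2 \rangle\, \omega$ whenever $|a_1| + |a_2| = d$; hence the homological degree $d$ component of $\bar{m}_2$ on $V \otimes V$ is exactly $\langle-,-\rangle|_{(V \otimes V)_d}$, so $\bar{m}_2(W)$ has vanishing degree $d$ part, and since $\bar{A}_i = 0$ for $i > d$ this gives $\bar{m}_2(W) \subseteq \bar{A}_{<d} = V$. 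For $n \geqslant 3$, the image under $\bar{m}_n$ of $\bigcap_{i+2+j=n} V^{\otimes i} \otimes W \otimes V^{\otimes j} \subseteq \bar{A}^{\otimes n}$ lies in $\bar{A}$, is zero in degree $d$ by \cref{t_golod_gorenstein_koszul}, and is zero in degrees $> d$ since $\bar{A}$ vanishes there; hence it is contained in $\bar{A}_{<d} = V$. This establishes \cref{eq:special-Koszul}, so $(A,V,W)$ is a strictly Koszul presentation for $\phi$, and in particular $\phi$ is strictly Koszul.

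I expect the main obstacle to be the bookkeeping that connects the explicit choice $W = \ker(\langle-,-\rangle|_{V \otimes V})$ with the abstract quadratic relations $\bar W$ of the Koszul algebra $T$, together with the corresponding identification $\tmod{V}/(W) \cong A$ of bigraded $Q$-modules carrying the transported $\ai$-structure; this is where one must use that short Gorenstein algebras are quadratic with precisely these relations. The step where cyclicity of the $\ai$-structure (and not merely formality of $R \lotimes_Q k$) is genuinely needed is the $n = 2$ case of \cref{eq:special-Koszul}, via the formula $m_2 = \langle-,-\rangle\,\omega$ in total degree $d$ supplied by the construction in \cref{p_cyclic_ai}.
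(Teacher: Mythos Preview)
Your proof is correct and follows essentially the same strategy as the paper: invoke \cref{p_cyclic_ai} to obtain the cyclic $\ai$-structure, use nondegeneracy of the pairing together with Nakayama to identify $A\cong\tmod{V}/(W)$, and then appeal to \cref{t_golod_gorenstein_koszul} for the vanishing needed in the strictness condition \cref{eq:special-Koszul}. Your handling of the $n=2$ case of \cref{eq:special-Koszul} (via the identity $\langle m_2(a,b),1\rangle=\langle a,b\rangle$ forced by cyclicity and strict unitality) and your separate disposal of $d=1$ are in fact more careful than the paper's terse claim that $m_n(V^{\otimes n})\subseteq V$ for all $n$, which is literally false for $n=2$ though only $\bar m_2(W)\subseteq V$ is required.
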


\begin{proof}
Since $d$ is odd and $R$ is Gorenstein of characteristic zero, we may endow $A$ with a cyclic $\ai$-structure by \cref{p_cyclic_ai}. 

The pairing $\langle -,-\rangle $ defined in \cref{cyclic_ai_algebra} is nondegenerate, and this implies that $W$ is a summand of $V\otimes V$. We know that $A\otimes k\cong \talg{V\otimes k}/(W\otimes k)$ since $A\otimes k$ is short Gorenstein. It follows from Nakayama's lemma that $A\cong \tmod{V}/{W}$ as graded $Q$-modules. The assertion $(\bar{m}_n(\bar{A}^{\otimes n}))_d=0$ from \cref{t_golod_gorenstein_koszul} implies that $m_n(V^{\otimes n})\subseteq V$ for all $n$, and therefore the presentation $(A,V,W)$ is strict.
\end{proof} 

Taking an almost Golod Gorenstein ring $R$, with $Q$ and $A$ as in the theorem, we can describe the Priddy coalgebra explicitly:
\[
 \priddy[n]{V,W} = \Big\{{\textstyle{\sum} v_1 \otimes \cdots \otimes v_n} ~\Big|~\textstyle{\sum}  v_1 \otimes \cdots \otimes\langle v_i, v_{i+1} \rangle v_{i+2} \otimes \cdots \otimes v_{n}= 0,\ 1 \leqslant i < n\Big\}.
 \]
This is also the dual of a noncommutative hypersurface, as in \cref{e:koszul-ai:buchsbaum-eisenbud}.

If we let $M$ be a bounded complex of finitely generated $R$-modules, then there is a finite free $Q$-resolution $G \to M$, and $G$ can be given a strictly unital $\ai$-module structure over $A$ by \cref{c_ai_module}. All of this data can be constructed with finitely many computations, and it can be assembled into a resolution
\begin{equation*}
R \totimes \priddy{V,W} \totimes G \xrightarrow{\ \simeq \ } M
\end{equation*}
with an explicit differential given in \cref{priddy-resolution} in terms of the $\ai$-structures of $A$ and $G$. When $M=k$ is the residue field and $G=K^Q$ is the Koszul complex of $Q$, the Priddy resolution is minimal by \cref{t_inert}.

The last example provides a class of almost Golod Gorenstein rings where we verify they are Cohen Koszul without any assumptions on the characteristic of $k$ or the parity of the codimension. 


\begin{example}\label{ex_amost_lin_a_infinity}
Let $Q$ be a standard graded polynomial ring over $k$ and let $R=Q/I$, where $I$ is an ideal generated by forms of degree $e\geqslant 3$ admitting an almost linear free resolution, as in \cref{r_list_of_examples_almost_golod_gor}. We further assume that $R$ is Gorenstein of codimension $d$.

Writing $T_{i,j}=\Tor[i]{Q}{R}{k}_j=\H_i(K^R)_j$, we have, by assumption, $T_{i,j} = 0$ when $0<i < d$ and $j-i \neq e-1$. The only other nontrivial components are the unit $T_{0,0}=k$ and the socle $T_{d,2e-2+d}=k$. The product on $T$ must preserve both gradings, and it follows that $T$ is a short Gorenstein ring.

Let $A\xra{\simeq} R$ be the minimal graded free resolution of $R$ over $Q$. In this setting there exists an $\ai$-algebra structure $\{m_n\}$ on $A$ that is homogeneous of degree zero with respect to the internal grading. In particular the induced operations $m_n\otimes_Q k$ on $T$ restrict to maps
\begin{equation*}
T_{i_1,j_1} \otimes_k \cdots \otimes_k T_{i_n,j_n} \longrightarrow T_{i_1+\cdots+i_n+n-2,j_1+\cdots+j_n}\,.
\end{equation*}
If some $i_\ell=d$ the map above is zero for (homological) degree reasons, and so we may assume each  $i_\ell$ is strictly smaller than $d$. Therefore each $j_\ell-i_\ell = e-1$ and
\begin{equation*}
(j_1+\cdots+j_n)-(i_1+\cdots+i_n+n-2) = n(e-1) - n +2\,.
\end{equation*}
For $m_n\otimes_Q k$ to be nonvanishing on this component, this expression must equal $e-1$ or $2e-2$. In the former case we find $(n-1)(e-1) = n-2$, which cannot hold; in the latter case $(n-2)(e-1)=n-2$, and this cannot hold when $n$ and $e$ are not $2$. Hence $m_n\otimes_Q k=0$ for $n\neq 2$.

Thus we may apply \cref{koszul-ai-presentation} with $V=\bigoplus_{i=1}^{d-1} A_i$ and $W$ as in \cref{eq_golod_gor_koszul_presentation} to conclude that $Q \to R$ is Koszul. Finally, it follows from  \cref{t_almost_golod_gor_characterisation} that $R$ is not only Cohen Koszul, but also almost Golod Gorenstein.
\end{example}

\bibliographystyle{amsalpha}
\bibliography{refs}

\end{document}